\newtheorem{thm}{Theorem}[section]
\newtheorem*{thmA}{Theorem~A}
\newtheorem*{thmB}{Theorem~B}
\newtheorem*{thmC}{Theorem~C}
\newtheorem{df}[thm]{Definition}
\newtheorem{prop}[thm]{Proposition}
\newtheorem{cor}[thm]{Corollary}
\newtheorem{claim}{Claim}
\newtheorem*{cor*}{Corollary~1.3}
\newtheorem{lem}[thm]{Lemma}
\newtheorem{ex}[thm]{Example}
\newtheorem{rem}[thm]{Remark}
\newcommand{\Pic}{\operatorname{Pic}}
\newcommand{\Spec}{\operatorname{Spec}}
\newcommand{\ord}{\operatorname{ord}}
\newcommand{\Per}{\operatorname{Per}}
\newcommand{\pp}{\mathbb{P}}
\newcommand{\cc}{\mathbb{C}}
\newcommand{\qq}{\mathbb{Q}}
\newcommand{\af}{\mathbb{A}}
\newcommand{\ox}{\mathcal{O}}
\begin{document}

\title[equidistribution of periodic points]
{The equidistribution of small points
for strongly regular pairs of polynomial maps}
\author{Chong Gyu Lee}

\keywords{canonical height, periodic point, equidistribution, small points, invariant current, regular affine automorphism}

\subjclass{Primary: 11G50 Secondary: 14G50, 32H50, 37P05, 37P30}

\date{\today}

\address{Department of Mathematics, University of Illinois at Chicago, Chicago, IL, 60607, US}

\email{phiel@math.uic.edu}

\maketitle

\begin{abstract}
    In this paper, we prove the equidistribution of periodic points of
    a regular polynomial automorphism $f : \af^n \rightarrow \af^n$ defined over a number field $K$:
    let $f$ be a regular polynomial automorphism defined over a number field $K$ and let $v\in M_K$. Then, there exists an $f$-invariant probability measure $\mu_{f,v}$ on $\operatorname{Berk}\bigl( \pp^n_{\mathbb{C}_v} \bigr)$ such that the set of periodic points of $f$ is equidistributed with respect to $\mu_{f,v}$. We will prove it by equidistribution of small points for strongly regular pair of polynomial maps. 
\end{abstract}

\section{Introduction}

    Let $f =(f_1,\cdots, f_n): \af^n \rightarrow \af^n$ be a polynomial map defined over a number field $K$; i.e, we assume that $f_1,\cdots, f_n \in K[X_1, \cdots, X_n]$. We say \emph{$f$ is a polynomial automorphism} if there is a polynomial map $f^{-1}:\af^n \rightarrow \af^n$ which is the inverse of $f$. We say that a polynomial automorphism $f$ is \emph{regular} if meromorphic extensions $\overline{f}, \overline{f^{-1}}$ of $f, f^{-1}$, which are rational maps on $\pp^n$, have no common indeterminacy point. Regular polynomial automorphisms are well studied in various field \cite{BS1, BS2, BS3, BS4, BS5, BLS, De, M, K, K2, Le1, S1}.

    In this paper, we obtain an arithmetic equidistribution theorem for regular polynomial maps defined over a number field:
    \begin{thmA}
        Let $f:\af^n \rightarrow \af^n$ be a regular polynomial automorphism defined over a number field $K$ and let $v\in M_K$. Then, there exists an $f$-invariant probability measure $\mu_{f,v}$ on the Berkovich projective space $\operatorname{Berk}\bigl( \pp^n_{\mathbb{C}_v} \bigr)$ such that the set of periodic points of $f$ is equidistributed with respect to $\mu_{f,v}$: let $\{ x_m\} $ be a sequence of periodic points of $f$ such that $|\{ x_m\} \cap W|< \infty$ for any proper subvariety $W$ of $\pp^n$. Then
        \[
        \dfrac{1}{\ord x_m} \sum_{y \in \Gamma_{x_m}} \delta_y ~\text{weakly converges to}~\mu_{f,v}
        \]
       where $\Gamma_{x_m}$ is the Galois orbit of $x_m$ and $\delta_y$ is a Dirac measure at $y$.
    \end{thmA}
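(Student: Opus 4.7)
The plan is to produce the statement as a consequence of an arithmetic equidistribution theorem in the style of Yuan, once the necessary canonical heights and local invariant measures have been constructed for a regular polynomial automorphism. The fundamental difficulty is that a regular polynomial automorphism $f$ does not extend to a morphism of $\pp^n$ — the meromorphic extension $\overline f$ has a nontrivial indeterminacy locus — so one cannot apply the Call–Silverman construction to $f$ alone. The regularity hypothesis, which states that the indeterminacy loci of $\overline f$ and $\overline{f^{-1}}$ are disjoint, is precisely the input that allows one to work with the pair $(f,f^{-1})$ as a substitute for a polarized morphism.

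First I would introduce the notion of a \emph{strongly regular pair} of polynomial maps, as promised in the abstract, and for such a pair construct two Tate-style canonical heights $\hat h^+$ and $\hat h^-$ by telescoping along the iterates of $f$ and $f^{-1}$ respectively. Setting $\hat h_f := \hat h^+ + \hat h^-$ yields a nonnegative canonical height that vanishes on periodic points. In parallel, at every place $v \in M_K$ I would take limits of normalized pullbacks of a fixed Fubini–Study type metric on $\ox_{\pp^n}(1)$ under iterates of $f$ and $f^{-1}$, producing local Monge–Ampère type measures $\mu^+_{f,v}$ and $\mu^-_{f,v}$ on $\operatorname{Berk}(\pp^n_{\cc_v})$; the wedge product $\mu_{f,v} := \mu^+_{f,v}\wedge \mu^-_{f,v}$, suitably normalized, will be the desired $f$-invariant probability measure.

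Next I would assemble these local metrics into an adelic metric on a suitable line bundle over $\pp^n$ whose induced global height is $\hat h_f$. The main obstacle lies here: one must verify that the adelic metric is \emph{semipositive} (in the sense of Zhang) at every place, including the non-archimedean ones in the Berkovich sense. The strategy is to show that the Tate-type limit defining the local metric at $v$ is a uniform limit of model metrics coming from equivariant models of $\pp^n$, which requires the convergence of $\tfrac{1}{d^m}(f^m)^*$ applied to a reference metric. Algebraic stability of the pair, which follows from the regularity hypothesis (the orbit of the indeterminacy set of $\overline f$ under $\overline f$ is disjoint from that of $\overline{f^{-1}}$), should provide the uniform control needed for convergence and hence for semipositivity.

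With the adelic semipositive, integrable metric in hand, Yuan's arithmetic equidistribution theorem — in its Berkovich formulation due to Yuan and Chambert-Loir — applies: any generic sequence of algebraic points $\{x_m\}$ with $\hat h_f(x_m) \to \inf \hat h_f = 0$ is equidistributed at $v$ with respect to $\mu_{f,v}$. To conclude Theorem~A I only need to observe that periodic points satisfy $\hat h_f(x_m)=0$, and that the hypothesis $|\{x_m\}\cap W|<\infty$ for every proper subvariety $W$ is exactly the genericity assumption in Yuan's theorem. The hard part, as noted, is the non-archimedean semipositivity of the limit metric; once that is established, everything else is a careful transcription of the morphism case $f:\pp^n \to \pp^n$ to the rational-map setting afforded by the regular pair.
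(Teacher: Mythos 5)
Your overall skeleton (canonical Green functions/heights for the pair $\{f,f^{-1}\}$, an adelic semipositive metric on $\ox_{\pp^n}(1)$, Yuan's theorem, periodic points have canonical height zero) matches the paper, but two essential steps are missing or would not work as described. First, the semipositivity you defer is not a technicality to be supplied later by ``uniform limits of model metrics from equivariant models of $\pp^n$'' under $\tfrac{1}{d^m}(f^m)^*$: since $f^m$ is only a rational self-map of $\pp^n$, these pullbacks do not produce honest (semipositive, continuous) metrics on $\ox_{\pp^n}(d^m)$, and the single Green functions $G_{f,v}$, $G_{f^{-1},v}$ do not extend to continuous semipositive metrics across the indeterminacy loci. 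The paper's actual mechanism is different: it forms the joint maps $\phi_m=(f^m,g^m):\pp^n\to\pp^{2n}$, which are genuine \emph{morphisms} because the pair is jointly regular \emph{and of the same degree}, pulls back the standard metric on $\ox_{\pp^{2n}}(1)$ (so each approximant is manifestly semipositive and algebraic), and proves uniform convergence of the normalized metrics via the local Green-function estimates and good reduction at almost all places. This forces a point you never address: $\deg f\neq\deg f^{-1}$ in general for $n>2$, so one must replace $\{f,f^{-1}\}$ by $\{f^{l_2},f^{-l_1}\}$ with $l_1=\dim I(f)+1$, $l_2=\dim I(f^{-1})+1$, for which the degrees agree; without this your construction of a single polarizing object has no starting point. (Relatedly, for $n>2$ the limit measure is not a wedge of two \emph{measures} $\mu^+\wedge\mu^-$ but the Chambert-Loir measure of the limit metric, which at archimedean places is the mixed current $\mu_+^{l_2}\wedge\mu_-^{l_1}$.)

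Second, Yuan's theorem requires the sequence to be small in the sense $h_{\overline{L}}(x_m)\to h_{\overline{L}}(\pp^n)$, not merely $h_{\overline{L}}(x_m)\to\inf_X h_{\overline{L}}(X)=0$; you implicitly assume $h_{\overline{L}}(\pp^n)=0$ without proof. The paper establishes this by first proving that $\Per(f)$ is Zariski dense (Theorem~C, via reduction at places of good reduction, the fact that every $\overline{\mathbf{k}}_v$-point of $\af^n$ is periodic for the reduced automorphism, a Fornaess--Sibony finiteness theorem for the coincidence locus of two morphisms of different degrees, and a spreading-out argument over an arithmetic model), and then applying Zhang's essential-minimum inequalities to a dense set of height-zero points to conclude $h_{\overline{L}}(\pp^n)=0$. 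Without this input (or at least without invoking the hypothesized generic sequence of periodic points together with Zhang's inequality and the nonnegativity of the canonical height), the ``small'' hypothesis of the equidistribution theorem is unverified, so your final step does not yet yield Theorem~A.
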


    Theorem~A generalizes the known result for H\'{e}non maps on $\af^n_\cc$: Bedford, Lyubich \& Smillie \cite{BLS} proved that that the saddle periodic points of H\'{e}non map on $\af^2_\cc$ are equidistributed with respect to the measure of maximal entropy. Dinh and Sibony \cite{DS2} have recently proved the corresponding complex equidistribution result for periodic points of regular polynomial automorphisms on $\af^n_\cc$ for any $n$. In particular, the measure $\mu_{f,v}$ in Theorem~A at the archimedean place coincides with the measure of maximal entropy.

    The measures $\mu_{f,v}$ are Chambert-Loir measures associated to a metrized adelic line bundle \cite{Ch}. To obtain Theorem~A, we use the arithmetic equidistribution results of Yuan \cite{Y} (see Theorem 2.3). As we shall explain, dynamical system of regular polynomial automorphism is not covered by Yuan's application in algebraic dynamics, as Zhang's dynamical adelic metric \cite{Z1} is designed for polarizable endomorphisms on projective varieties while these polynomial automorphisms have indeterminacy in $\pp^n$. In Section~6, we explain how to construct a ``polarizable sequence of morphisms" from a regular polynomial automorphism in order to use the existing arithmetic equidistribution results.

    In fact, Theorem~A is a special case of a more general equidistribution theorem for strongly regular pairs of polynomial maps of the same degree:
    \begin{df}\label{SRO}
        Let $f,g : \af^n \rightarrow \af^n$ be polynomial maps defined over a number field $K$ and let $h$ be the absolute logarithmic height function on $\pp^n_{\overline{K}}$. We say a pair $\{f,g\}$ is \emph{strongly regular} if it satisfies the following condition:
        \begin{itemize}
        \item the meromorphic extensions of $f,g$ have no common indeterminacy point.
        \item $c(f) ,c(g) < \infty$ where
        \[
        c(f): = \displaystyle \limsup_{\substack{h(X) \rightarrow \infty\\X \in \af^n_{\overline{K}} }} \dfrac{h(X)}{h\bigl( f(X) \bigr)}.
        \]
        \item $\deg f, \deg g \geq 2$.
        \item $f\circ g = g \circ f$.
        \item $f,g$ are algebraically stable: $\deg f^m = (\deg f)^m$ and $\deg g^m = (\deg g)^m$.
        \item $\deg (f\circ g) < \min (\deg f, \deg g)$.
        \end{itemize}
    \end{df}
    \noindent Note that we will use the $D$-ratio version of the definition of the strongly regular pair later, which uses $d(f), d(g)$ instead of $c(f), c(g)$ for computational convenience. (See Subsection~2.3 for details.)

    The following theorem shows that a sequence of generic and small points is equidistributed with respect to a canonical measure:
    \begin{thmB}
        Let $S=\{f,g:\af^n \to \af^n\}$ be a strongly regular pair of polynomial maps defined over a number field $K$ such that $\deg f = \deg g$. Then, we have the equidistribution of small points:
        let $\{ x_m\} $ be a generic and small sequence on $\pp^n_{\overline{K}}$ with respect to the arithmetic canonical height function $\widetilde{h}_S$. Then, for any place $v \in M_K$, there is a probability measure $\mu_{S,v}$ on $\operatorname{Berk}\bigl( \pp^n_{\mathbb{C}_v} \bigr)$  such that a sequence of the probability measure on the Galois orbit of $x_m$ weakly converges to $\mu_{S,v}$.
    \end{thmB}

    As in Theorem~A, the measures $\mu_{S,v}$ are Chambert-Loir measures associated to a metrized adelic line bundle $(\ox_{\pp^n}(1), ||\cdot||_S)$. The metric is constructed by forming local Green functions (local escape-rate function) for the pair $S=\{f, g\}$. Note that the local Green function $G_{S,\cdot}$ of a strongly regular pair $S$ is a Weil function so that we have a N\'{e}ron divisor associate to $G_{S,\cdot}$, which is equivalent to a metrized line bundle.

    Now suppose that $f$ is a regular polynomial automorphism on $\af^n$. We apply Theorem~B to obtain Theorem~A, but we observe that the pair $\{f, f^{-1}\}$ is a strongly regular pair of ``different'' degree if $n>2$ - it is easily resolved by Proposition~\ref{degree}. Also, Theorem~B requires a generic and small sequence consisting of periodic points. Theorem~C allows us to apply Theorem~B to obtain Theorem~A. (See Section~7 for details.)

     \begin{thmC}
     Let $f: \af^n \rightarrow \af^n$ be a regular polynomial automorphism defined over a number field $K$. Then, the set of periodic point
     of $f$ is Zariski dense.
     \end{thmC}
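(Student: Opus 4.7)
The plan is to reduce to the complex setting and exploit the hyperbolicity of the measure of maximal entropy. Fix an embedding $K \hookrightarrow \cc$, so that $f$ becomes a regular polynomial automorphism of $\cc^n$ of degree $d \geq 2$. By the theorem of Dinh--Sibony cited in the introduction, the saddle periodic points of $f$ equidistribute with respect to the measure of maximal entropy $\mu_f$; in particular, the Euclidean closure of $\Per(f)$ contains $\Supp(\mu_f)$, so it suffices to prove that $\Supp(\mu_f)$ is not contained in any proper algebraic subvariety of $\af^n_\cc$.

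The measure $\mu_f$ is $f$-invariant, ergodic, and hyperbolic: its Lyapunov spectrum consists of $n_+$ strictly positive and $n_-$ strictly negative exponents with $n_+ + n_- = n$. By Pesin's stable-manifold theorem, $\mu_f$-almost every point $x$ carries a local unstable holomorphic disk of complex dimension $n_+$ and a local stable holomorphic disk of complex dimension $n_-$, transverse at $x$. If $\Supp(\mu_f)$ were contained in some proper subvariety $Z \subsetneq \af^n_\cc$, both families of disks would lie inside $Z$ for $\mu_f$-almost every $x$, forcing $\dim_\cc Z \geq n_+ + n_- = n$, a contradiction. Hence $\Per(f)$ is Zariski dense in $\af^n_\cc$; since its Zariski closure is $\operatorname{Gal}(\overline{K}/K)$--stable, it descends to $\af^n_{\overline{K}}$, proving the theorem.

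The main obstacle is the hyperbolicity of $\mu_f$ together with the presence of holomorphic local stable/unstable manifolds $\mu_f$-almost everywhere; this is the genuine input from complex dynamics (Bedford--Lyubich--Smillie for $n=2$, Dinh--Sibony for general $n$). An alternative, more algebraic, route would count $\#\operatorname{Fix}(f^m)$ by intersection theory on a resolution of the meromorphic extension of $f^m$, using the algebraic stability $\deg f^m = d^m$ together with the disjointness of the two indeterminacy loci, and then argue that no proper $f$-invariant subvariety could absorb the resulting growth; carrying this out rigorously requires a careful comparison of the dynamical degrees of $f|_V$ with those of $f$ and is of comparable technical depth.
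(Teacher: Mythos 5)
Your reduction to the complex setting is fine up to a point: granting the Dinh--Sibony equidistribution of saddle periodic points cited in the introduction, the Euclidean closure of $\Per(f)$ does contain $\Supp(\mu_f)$, so Zariski density would follow once one knows $\mu_f$ is not supported on a proper algebraic subvariety, and density over $\cc$ does descend to $\af^n_{\overline{K}}$. The genuine gap is in your argument for that last non-algebraicity statement. From $\Supp(\mu_f)\subseteq Z$ you assert that the Pesin stable and unstable disks through $\mu_f$-a.e.\ point ``would lie inside $Z$,'' but nothing forces this: the local stable/unstable manifolds of a point of $\Supp(\mu_f)$ are not contained in $\Supp(\mu_f)$ (think of a hyperbolic horseshoe, whose support is a Cantor set while the disks are genuine one-dimensional manifolds), and an algebraic set containing the support need not contain any of these disks. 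Since the whole contradiction $\dim_\cc Z\geq n_++n_-=n$ rests on that containment, the proof as written does not go through; moreover the hyperbolicity of $\mu_f$ with $n_++n_-=n$ in arbitrary dimension is itself a deep input that would need a precise citation. The standard repair is much softer and bypasses Pesin theory entirely: $\mu_f$ is the wedge $\mu_+^{l_2}\wedge\mu_-^{l_1}$ of closed positive currents whose local potentials (the Green functions $G^{\pm}$) are continuous, so $\mu_f$ puts no mass on pluripolar sets; a proper algebraic subvariety $Z\subsetneq\af^n_\cc$ is pluripolar, hence $\mu_f(Z)=0$, contradicting $\mu_f(Z)=1$.

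You should also be aware that the paper proves Theorem~C by a completely different, purely arithmetic route, modeled on Fakhruddin's argument: for a place $v$ of good reduction, every point of $\af^n(\overline{\mathbf{k}}_v)$ is periodic for the reduced automorphism $\widetilde f$ (it permutes the finite sets $\af^n(\mathbf{k}')$), the coincidence loci of the morphisms $\widetilde{\phi}_m=(\widetilde{f}^{\,ml_2},\widetilde{f}^{\,-ml_1})$ and the diagonal embedding are finite by the Fornaess--Sibony theorem on morphisms of different degrees, and a spreading-out argument over an arithmetic model transports this density in the special fibers to the generic fiber. That approach uses only the finiteness theorem and good reduction, and in particular does not rely on the (preprint) complex equidistribution result or on any hyperbolicity statement, whereas your route --- once the pluripolarity fix is made --- is correct but leans on substantially heavier analytic machinery.
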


    Theorems A and B should be viewed in the context of a collection of ``arithmetic equidistribution" results obtained in recent years; the equidistribution of small points is first studied by Szpiro, Ullmo \& Zhang \cite{SUZ}. After various research of Baker \& Rumely \cite{BR}, Chambert-Loir \cite{Ch} and Favre \& Rivera-Letelier \cite{FR}, Yuan \cite{Y} proved it in arbitrary dimensions. Yuan's result is applied in various cases of algebraic dynamical systems defined over a number field: Yuan applied his theorem for polarizable endomorphisms and the author \cite{Le6} used it for some automorphisms on $K3$ surface.

\par\noindent\emph{Acknowledgements}.\enspace
I would like to thank Shu Kawaguchi for comments on good and bad reductions. I also thank Eric Bedford, Laura DeMarco, and Xinyi Yuan for helpful discussions and comments.

\section{Preliminaries}

    Here's some basic terminology. We refer \cite{MO} for arithmetic height functions on a finitely generated field $K$ over $\qq$, \cite{Le2} for details on the $D$-ratio, \cite{Cu}, \cite{H} and \cite{Sh} for the resolution of indeterminacy, \cite{K2} and \cite{SH} for the local Green functions.

    \subsection{Notations} We set the following notations.\\

    \begin{tabular}{l@{\quad}l}
        $I(f)$ &  the indeterminacy locus of a rational map $f:\pp^n \dashrightarrow \pp^n$\\
        $K$ &  a number field\\
        $v$ & a prime place on $K$\\
        $M_K$  & the set of prime places on $K$\\
        $M_K^0$ & the set of nonarchimedean places on $K$\\
        $M_K^\infty$ & the set of archimedean places on $K$\\
        $K_v$ & the $v$-adic local field\\
        $\ox_v = \ox_{K_v}$ & the ring of integer of $K_v$\\
        $||\cdot||_v$ & the $v$-adic supreme norm on $\af^{n+1}_{\mathbb{C}_v}$\\
        $||\cdot||_{L} = \{ ||\cdot||_{L, v} ~|~ v\in M_K \}$ & an adelic metric $\left\{ ||\cdot||_{L, v} ~|~ v \in M_K \right\}$ on a line bundle $L$\\
        $\mathbf{k}=\mathbf{k}_v$ & the residue field of $K_v$\\
        $\widetilde{f}$ & reduction map of $f$ on $\pp^n_{\mathbf{k}_v}$\\
    \end{tabular}

    \subsection{Equidistribution of small points}

    One of main ingredients of this paper is the equidistribution of small points. In this section, we introduce the theorem of Yuan \cite{Y}. Note that it requires the semipositive metric.

    \begin{thm}[Yuan, \cite{Y}]\label{ED}
    Let $V$ be a projective variety defined over a number field $K$, let $\overline{L}= (L,||\cdot||)$ be a metrized line bundle such that $L$ is an ample line bundle on $V$ and $||\cdot||$ is a semipositive adelic metric on $L$ and let $v\in M_K$. Define an arithmetic height function $h_{\overline{L}}$ on the set of closed subvarieties $W$ of $V$,
    \[
    h_{\overline{L}}(W) = \dfrac{\widehat{c}_1 (\overline{L}|_W)^{\dim W+1} }{(\dim W +1 )\ord_L W}
    \]
    and a probability measure on the Berkovich analytic space  $\operatorname{Berk}\bigl( V(\cc_v) \bigr)$,
    \[
    \mu_{\overline{L}, v} = \dfrac{c_1(\overline{L})^{\deg V}_v }{\deg_{\overline{L}}V},
    \]
    where $\widehat{c}_1 (\overline{L})$ is the arithmetic first Chern form of $\overline{L}$. (See \cite[\S2.1]{BGS} for details.) Suppose that $\{x_m\}$ is a generic and small sequence. (Note that a sequence $\{x_m\}$ is called \emph{generic} if $\{x_m\}$ has finite intersection with any proper subvariety of $V$ and is called \emph{small} if $h_{\overline{L}}(x_m)$ converges to $h_{\overline{L}}(V)$.)
    Then, the sequence of probability measures on the Galois orbit of $x_m$ weakly converges to $\mu_{\overline{L},v}$ on $\operatorname{Berk}\bigl( \pp^n_{\cc_v} \bigr)$.
    \end{thm}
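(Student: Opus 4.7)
The plan is to follow the variational method introduced by Szpiro--Ullmo--Zhang and promoted to higher dimension by Yuan, whose key new input is an arithmetic Siu-type (volume) inequality for semipositive adelically metrized line bundles. Fix a continuous real-valued test function $\phi$ on $\operatorname{Berk}\bigl(V(\cc_v)\bigr)$. To prove weak convergence it suffices to show
\[
\liminf_{m\to\infty} \dfrac{1}{\#\Gamma_{x_m}}\sum_{y\in \Gamma_{x_m}}\phi(y)\;\geq\; \int \phi\,d\mu_{\overline{L},v},
\]
and then apply the same inequality to $-\phi$.

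The first step is to perturb the metric. Define a new adelic metric $\overline{L}(\epsilon\phi)$ by multiplying $||\cdot||_{L,v}$ by $e^{-\epsilon\phi}$ while keeping the norms at $w\neq v$ unchanged. For $\epsilon$ sufficiently small, approximate $\phi$ uniformly by differences of smooth (or model) semipositive functions so that $\overline{L}(\epsilon\phi)$ can be written as a difference of two semipositive adelic line bundles; this reduction to ``nearly semipositive'' perturbations is where Yuan's extension of Zhang's framework is needed. For any algebraic point $x\in V(\overline{K})$, functoriality of local heights gives the identity
\[
h_{\overline{L}(\epsilon\phi)}(x)=h_{\overline{L}}(x)+\dfrac{\epsilon}{\#\Gamma_{x}}\sum_{y\in\Gamma_x}\phi(y).
\]

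The second, and central, step is the essential minimum lower bound. Yuan's arithmetic volume inequality, a Siu-type estimate for adelic line bundles, implies that for any generic sequence $\{x_m\}$,
\[
\liminf_{m\to\infty}h_{\overline{L}(\epsilon\phi)}(x_m)\;\geq\;\dfrac{\widehat{c}_1\bigl(\overline{L}(\epsilon\phi)\bigr)^{\dim V+1}}{(\dim V+1)\deg_L V}.
\]
Now expand the arithmetic self-intersection on the right to first order in $\epsilon$ using multilinearity of arithmetic intersection theory and the description of the Chambert-Loir measure as the top-degree intersection at $v$:
\[
\widehat{c}_1\bigl(\overline{L}(\epsilon\phi)\bigr)^{\dim V+1}=\widehat{c}_1(\overline{L})^{\dim V+1}+(\dim V+1)\epsilon \deg_L V\int\phi\,d\mu_{\overline{L},v}+O(\epsilon^2).
\]
Combining with the height identity above, and using that $\{x_m\}$ is small so $\liminf h_{\overline{L}}(x_m)=h_{\overline{L}}(V)$, the $h_{\overline{L}}(V)$ terms cancel; dividing by $\epsilon>0$ and letting $\epsilon\to 0^+$ yields the desired inequality for $\phi$.

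The main obstacle is the second step: upgrading the arithmetic Hilbert--Samuel / Siu-type inequality from the setting of ample integral models (as in Zhang) to general semipositive adelic metrics, so that continuous perturbations by $e^{-\epsilon\phi}$ can be handled. This requires an arithmetic volume function that is continuous on the cone of semipositive adelic metrics and satisfies a log-concavity/Siu estimate; everything else (the height perturbation identity, the first-order expansion of the self-intersection, and the symmetry trick $\phi\mapsto -\phi$) is then formal. Zariski density and genericity of $\{x_m\}$ are used only to ensure that the essential minimum coincides with the asymptotic infimum of heights along the sequence.
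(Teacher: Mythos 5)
The paper does not prove this statement at all: it is quoted verbatim from Yuan \cite{Y} (with a pointer to \cite{BGS} for the arithmetic intersection theory), so there is no internal proof to compare against; the relevant comparison is with Yuan's own argument. Your outline is exactly that argument: the Szpiro--Ullmo--Zhang variational method, the twist $\overline{L}(\epsilon\phi)$ at the single place $v$, the height-perturbation identity over Galois orbits, the first-order expansion of $\widehat{c}_1(\cdot)^{\dim V+1}$ producing the Chambert-Loir measure, and the symmetrization $\phi\mapsto-\phi$. Two caveats. First, as you yourself flag, the entire mathematical content lies in the step you defer: the arithmetic Siu/volume inequality for adelic line bundles and the deduction that bigness forces small sections and hence a lower bound on heights along generic sequences. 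Naming it is not proving it, so as a self-contained proof the proposal is incomplete; as a reduction of equidistribution to Yuan's bigness theorem it is correct. Second, your displayed essential-minimum bound is stated with $\widehat{c}_1\bigl(\overline{L}(\epsilon\phi)\bigr)^{\dim V+1}$ for the perturbed bundle, but $\overline{L}(\epsilon\phi)$ is not semipositive, so Zhang's successive-minima inequality does not apply directly; the correct route bounds $\liminf_m h_{\overline{L}(\epsilon\phi)}(x_m)$ below via the arithmetic volume $\widehat{\operatorname{vol}}$ (through existence of small sections), and then the Siu-type estimate bounds that volume below by the self-intersection expansion up to $O(\epsilon^2)$; to first order in $\epsilon$ this gives what you wrote, but the logical order matters. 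Minor point: the height identity and the expansion each carry local normalization factors $[K_v:\mathbb{Q}_v]/[K:\mathbb{Q}]$ which cancel; you omit them, which is harmless but worth stating.
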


    \subsection{The $D$-ratio of polynomial maps}

    In this subsection, we introduce the basic idea of the $D$-ratio. We will let $K$ be a number field and let $h$ be the absolute naive height height on $\pp^n_{\overline{K}}$ for convenience.

    In Definition~\ref{SRO}, we want that $c(f), c(g)$ are finite. The reason why we require this condition is that we will use the following global and local height inequalities:
    \begin{lem}
    Let $f: \af^n \rightarrow \af^n$ be a polynomial map defined over a number field $K$ and let $v\in M_K$. Define
    \[
    c(f): = \displaystyle \limsup_{\substack{h(X) \rightarrow \infty \\X \in \af^n_{\overline{K}}}} \dfrac{h(X)}{h\bigl( f(X) \bigr)}.
    \]
    Suppose that $c(f)$ is finite. Then, for any $\epsilon >0$, there is a constant $C_\epsilon$ such that
    \[
    \left( c(f) + \epsilon \right) h\bigl( f(X) \bigr)  >  h(X) - C_\epsilon
    \quad \text{and}\quad
    \left( c(f) + \epsilon \right) \log^+ ||f(X)||_v  >  \log^+ ||X||_v - C_\epsilon
    \]
    for all $X \in \af^n_{\overline{K}}$.
    \end{lem}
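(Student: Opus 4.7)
The plan is to handle the two inequalities separately.

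For the global height inequality I would unwind the definition of $c(f)$ as a $\limsup$. Given $\epsilon > 0$, by definition there is a threshold $H_\epsilon$ such that every $X \in \af^n_{\overline{K}}$ with $h(X) > H_\epsilon$ satisfies $h(X)/h(f(X)) < c(f) + \epsilon$, which gives the desired inequality directly. For the remaining points with $h(X) \le H_\epsilon$, non-negativity of the Weil height on projective space forces
\[
h(X) - (c(f)+\epsilon)\, h(f(X)) \;\le\; h(X) \;\le\; H_\epsilon,
\]
so setting $C_\epsilon := H_\epsilon$ covers every $X$ uniformly.

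For the local inequality my plan is to appeal to the $D$-ratio machinery of \cite{Le2}. The hypothesis $c(f) < \infty$ forces the meromorphic extension $\overline{f}:\pp^n \dashrightarrow \pp^n$ to be dominant, so an effective Nullstellensatz produces, for each coordinate $X_i$, a polynomial identity expressing a power $X_i^{M}$ as a polynomial combination of the $f_j(X)$'s with controlled degrees and coefficient sizes. Taking $v$-adic absolute values of these identities and using the standard submultiplicative bounds on polynomials under $\|\cdot\|_v$ gives a place-by-place inequality
\[
\log^+\|X\|_v \;\le\; d(f)\,\log^+\|f(X)\|_v + C_v
\]
at every place $v$, where $d(f)$ is the $D$-ratio of $f$ and $C_v$ depends only on the coefficients of these Nullstellensatz identities. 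Summing these local bounds over all places of a field of definition of $X$ and invoking the product formula then produces the corresponding global comparison $h(X) \le d(f)\,h(f(X)) + O(1)$, whence $c(f) \le d(f)$.

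The main obstacle is to replace $d(f)$ by the tighter constant $c(f) + \epsilon$ in the local inequality. My plan here is to invoke the equality $c(f) = d(f)$ established in \cite{Le2}: the direction $c(f) \le d(f)$ has just been summarized above, and the reverse inequality is obtained by testing the local bound on sequences whose heights concentrate at a single place $v$, so that the global ratio $h(X)/h(f(X))$ recovers the local ratio $\log^+\|X\|_v/\log^+\|f(X)\|_v$ in the limit and any saving in the local constant would translate into a saving in $c(f)$. Once $c(f) = d(f)$ is in hand, the $v$-adic inequality $(c(f)+\epsilon)\log^+\|f(X)\|_v > \log^+\|X\|_v - C_\epsilon$ follows by absorbing the $\epsilon$-slack into the constant $C_\epsilon$, exactly as in the global argument.
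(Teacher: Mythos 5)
Your treatment of the first (global) inequality is fine and matches the paper, which dismisses it as immediate from the definition of the $\limsup$. The genuine gap is in the local inequality. Your plan rests on the claimed equality $c(f)=d(f)$ (with $d(f)$ your name for $r(f)/\deg f$), which is not what \cite{Le2} provides: the paper cites only the one-sided bound $c(f)\leq r(f)/\deg f$ from \cite[Theorem~7.3]{Le2}, and it explicitly treats the $D$-ratio version of strong regularity as a \emph{stronger} condition than the $c(f)$-version, so equality cannot be assumed. Your sketch of the reverse inequality --- testing the local bound on sequences whose height concentrates at one place --- does not work: the Nullstellensatz identity gives an upper bound $\log^+\|X\|_v \leq d(f)\log^+\|f(X)\|_v + C_v$, and no sequence argument can show such an upper bound is attained; the limsup $c(f)$ may well be strictly smaller than $d(f)$, in which case your route only yields the local inequality with the weaker constant $d(f)$, not the required $c(f)+\epsilon$.

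The paper closes exactly this gap by a different mechanism: on a resolution $\pi\colon V\to\pp^n$ with $\phi=f\circ\pi$ a morphism, the definition of $c(f)$ is converted into positivity of a divisor class. One checks that the fractional limit $\operatorname{Flim}_D\bigl(E,\pi^{-1}(\af^n)\bigr)$ of $E=(c(f)+\tfrac{\epsilon}{2})\phi^*H-\pi^*H$ is nonnegative, so $E$ is pseudo-effective with base locus outside $\pi^{-1}(\af^n)$; then $E'=E+\tfrac{\epsilon}{2}\phi^*H$, a pseudo-effective plus big divisor, is effective with base locus still outside $\pi^{-1}(\af^n)$, whence the local height $\lambda_{E',v}$ is bounded below on $\pi^{-1}(\af^n)$. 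Functoriality of local heights and $\lambda_{H,v}=\log^+\|\cdot\|_v+O(1)$ then give
\[
(c(f)+\epsilon)\log^+\|f(X)\|_v \;>\; \log^+\|X\|_v - C_\epsilon .
\]
The essential idea you are missing is this passage from the global $\limsup$ hypothesis to an effectivity statement on the resolution, which is what lets the sharp constant $c(f)+\epsilon$ (rather than the $D$-ratio) appear place by place.
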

    \begin{proof}
    The first inequality is trivial because of the definition of $c(f)$. For the second inequality, consider the resolution of indeterminacy of $f$:
    \[
    \xymatrix{
    V \ar[d]^\pi \ar[dr]^\phi & \\
    \pp^n \ar@{-->}[r]_f & \pp^n
    }
    \]
    Let $E = (c(f) + \frac{\epsilon}{2})\phi^*H - \pi^*H$ and let $D$ be an ample divisor on $V$. Then the fractional limit of $E$, defined on \cite{Le4}, is positive: let $X = \pi(Y)$. Then we get the following inequality.
    \begin{eqnarray*}
    \operatorname{Flim}_D\left(E,\pi^{-1}\bigl( \af^n \bigr) \right)
    &=&
    \limsup_{\substack{h_D(Y) \rightarrow \infty \\ Y \in \pi^{-1}\bigl( \af^n \bigr)}} \dfrac{h_{E}(Y)}{h_D(Y)} \\
    & = &
    \limsup_{\substack{h_D(Y) \rightarrow \infty \\ X = \pi(Y) \in \af^n_{\overline{K}}}} \dfrac{(c(f)+\frac{\epsilon}{2})h_{\phi^*H}(X) - h_{\pi^*H}(X)}{h_D(X)} \\
     &\geq& 0.
    \end{eqnarray*}
    Therefore, $E$ is a pseudo-effective divisor whose base locus lies outside of $\pi^{-1}\bigl( \af^n \bigr)$. Thus, a sum of a pseudo-effective divisor and a big divisor,
    \[
    E' = (c(f) + \epsilon)\phi^*H - \pi^*H = E + \frac{\epsilon}{2}\phi^*H
    \]
    is an effective divisor whose base locus lies outside of $\pi^{-1} \bigl( \af^n \bigr)\setminus |\phi^*H| = \pi^{-1}\bigl( \af^n \bigr)$. Thus, $\lambda_{E',v}$ is bounded below on $\pi^{-1}\bigl( \af^n \bigr)$. Let $X = \pi(Y)$ and get
    \begin{eqnarray*}
     \lambda_{E',v}(Y)  &=& (c(f) + \epsilon)\lambda_{H,v} \bigl( \phi(Y)\bigr) - \lambda_{H,v} \bigl( \pi(Y)\bigr) \\
    &=& (c(f) + \epsilon)\lambda_{H,v} \bigl( f(X)\bigr) - \lambda_{H,v} \bigl( X \bigr)  \\
    &>& O(1)
    \end{eqnarray*}
    by the functorial property of arithmetic height functions. Therefore, we get the desired result because $\lambda_{H,v}(X) = \log^+ ||X||_v +O(1)$.
    \end{proof}

    It is not convenient to consider $\epsilon$ in every inequality. So, we will use the $D$-ratio version of the definition of strongly regular
     pairs (Definition~\ref{SR}). It is a stronger condition than the original definition because we have the following inequality:
     \[
     c(f) \leq \dfrac{r(f)}{\deg f}.
      \]
     (see \cite[Theorem~7.3]{Le2} for details.) Also, it is more convenient for regular polynomial automorphisms: if we only consider the case of a regular polynomial automorphism, then polynomial maps $\Phi, \Psi$ defined in the proof of Theorem~B are regular polynomial automorphisms again and hence we know the $D$-ratios of $\Phi$ and $\Psi$. In general, it is very painful to show that the $D$-ratios of $\Phi, \Psi$ are finite while it is clear that $c(\Phi), c(\Psi)$ are finite if $r(f), r(g)$ are finite. It is the only reason why we define the strongly regular pair with $c(f)$.

     Note that we consider a polynomial map $f :\af^n \rightarrow \af^n$ as a rational map $f:\pp^n \dashrightarrow \pp^n$ such that $I(f)$ is contained in the infinity hyperplane $H = \pp^n \setminus \af^n$. We can define the $D$-ratio for such rational maps.

    \begin{df}\label{d-ratio}
        Let $H$ be a hyperplane on $\pp^n$ and let $f:\pp^n \dashrightarrow \pp^n$ be a rational map, defined over a number field $K$, such that $I(f) \subset H$. Then, we can define the $D$-ratio of $f$:
        let $V$ be a successive blowup of $\pp^n$ with a birational morphism $\pi:V \rightarrow \pp^n$ such that $f \circ \pi$ extends to a morphism $\phi$ and let
        \[\{H^\#, E_1, \cdots, E_r \}
        \]
         be a basis of $\Pic(V)$ where
        $H^\#$ is the proper transformation of $H$ by $\pi$ and $E_1, \cdots, E_r$ are irreducible components of the exceptional divisor on $V$. We define \emph{the $D$-ratio of $f$ on $V$} to be
        \[
        r(f) := \deg f \times \max{\dfrac{a_i}{b_i}}
        \]
        where $\pi^*H = a_0 H^\# + \sum a_i E_i$ and $\phi^*H = b_0 H^\# + \sum b_i E_i$.
    \end{df}

     \begin{df}[The $D$-ratio version]\label{SR}
        We say a pair of polynomial maps $\{f,g : \af^n \rightarrow \af^n\}$, defined over a number field $K$, is \emph{strongly regular} if it satisfies the following conditions:
        \begin{itemize}
        \item the meromorphic extensions of $f,g$ have no common indeterminacy point.
        \item $r(f) ,r(g) < \infty$.
        \item $\deg f, \deg g \geq 2$.
        \item $f\circ g = g \circ f$.
        \item $f,g$ are algebraically stable: $\deg f^m = (\deg f)^m, \deg g^m = (\deg g)^m$.
        \item $\deg (f\circ g) < \min (\deg f, \deg g)$.
        \end{itemize}
    \end{df}

    \begin{rem}
        If you want to use Definition~\ref{SRO} instead of Definition~\ref{SR}, all $d(f),d(g)$ will be replaced by $\deg f \cdot \bigl( c(f) + \epsilon\bigr)$ and $\deg g \cdot \bigl( c(g) + \epsilon \bigr)$. For example, the $v$-adic norm inequality {\rm (\ref{rineq})} in Section~3
        \[
        C_1||X||^{\frac{d_f}{r(f)} }  \leq ||f(X) || \leq C_2 ||X||^{d_f}
        \]
        will be replaced by
        \[
        C_{1,\epsilon} ||X||^{\frac{1}{c(f)+\epsilon} }  \leq ||f(X) || \leq C_2 ||X||^{d_f}.
        \]
    \end{rem}

    \begin{lem}\label{Le1}
        Let $H$ be a hyperplane on $\pp^n$, let $f:\pp^n \dashrightarrow \pp^n$ be a rational map, defined over a number field $K$, such that
        $I(f) \subset H$, let $\af^n_{\overline{K}} = \pp^n_{\overline{K}} \setminus H$, let $r(f)$ be a $D$-ratio of $f$ and let $v\in M_K$.
        Then, there are constant $C$ and $C_v$ satisfying
        \[\dfrac{r(f)}{\deg f}h\bigl( f(X) \bigr)  >  h(X) - C
        \quad \text{for all }X \in \af^n_{\overline{K}}, \quad
        \dfrac{r(f)}{\deg f} \log^+ || f(X)||_v  >  \log^+ ||X||_v - C_v
         \]
        for all $X \in \af^n_{\overline{K}}$.
    \end{lem}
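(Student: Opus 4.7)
The plan is to mirror the preceding proof, but to leverage the cleaner divisor-theoretic structure supplied by Definition~\ref{d-ratio} to avoid the $\epsilon$-perturbation. First I resolve the indeterminacy of $f$, obtaining a birational morphism $\pi:V\to\pp^n$ such that $\phi = f\circ\pi$ is a morphism, and fix the basis $\{H^\#, E_1, \ldots, E_r\}$ of $\Pic(V)$ from Definition~\ref{d-ratio}. In this basis I write $\pi^*H = a_0 H^\# + \sum_{i\geq 1} a_i E_i$ (with $a_0 = 1$ since $H^\#$ is the proper transform of $H$) and $\phi^*H = b_0 H^\# + \sum_{i\geq 1} b_i E_i$.

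Setting $\lambda := r(f)/\deg f = \max_{i}(a_i/b_i)$, the key step is to observe that the divisor
\[
D := \lambda \phi^*H - \pi^*H = (\lambda b_0 - a_0) H^\# + \sum_{i\geq 1} (\lambda b_i - a_i) E_i
\]
is effective: every coefficient $\lambda b_i - a_i$ is nonnegative directly by the defining maximum, no pseudo-effective detour needed. Because $I(f) \subset H$ and every exceptional divisor of $\pi$ lies over $H$, the support of $D$ is contained in $V \setminus \pi^{-1}(\af^n)$, and hence the local Weil function $\lambda_{D,v}$ is bounded below on $\pi^{-1}(\af^n)$.

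Finally, for any $X \in \af^n_{\overline K}$ and any $Y \in \pi^{-1}(X)$, functoriality of Weil functions gives
\[
\lambda \cdot \lambda_{H,v}\bigl(f(X)\bigr) - \lambda_{H,v}(X) = \lambda_{D,v}(Y) > -C_v,
\]
and substituting $\lambda_{H,v}(X) = \log^+\|X\|_v + O(1)$ yields the $v$-adic inequality. The global height statement I would obtain either by summing the local inequalities over $v \in M_K$ or, equivalently, by repeating the effectivity argument one level up with the global arithmetic height $h_D$. The one place I expect to have to be careful is confirming that the $H^\#$-coefficient in $D$ is nonnegative, which amounts to verifying that the max in Definition~\ref{d-ratio} dominates the ratio $a_0/b_0 = 1/\deg f$ as well as the exceptional ratios; everything else is the standard translation between divisor effectivity on $V$ and height lower bounds on $\pi^{-1}(\af^n)$, and the payoff over the $c(f)$-version is precisely that no $\epsilon$-slop propagates into the later arguments.
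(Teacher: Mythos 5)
Your proposal is correct and follows essentially the same route as the paper's proof: resolve the indeterminacy, observe that $\frac{r(f)}{\deg f}\phi^*H-\pi^*H$ is (represented by) an effective divisor supported over $H$, conclude that its Weil function is bounded below on $\pi^{-1}(\af^n)$, and finish by functoriality of local heights; your explicit coefficientwise check of effectivity (with the $H^\#$-coefficient handled by $r(f)\geq 1$) is just the detail the paper leaves implicit, and your global statement matches what the paper outsources to \cite[Theorem~5.1]{Le2}.
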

    \begin{proof}
    For the first inequality, see \cite[Theorem 5.1]{Le2}. For the second one, let $\pi:V \to \pp^n$ be a birational morphism such that $\phi = f \circ \pi$ is a morphism providing a $D$-ratio $r(f)$ on $V$. Then,
    \[
    E' = \dfrac{r(f)}{\deg f}\phi^*H - \pi^*H
    \]
    is an effective divisor whose base locus lies outside of $\pi^{-1}(\af^n)$. Therefore, $\lambda_{E', v}$ is bounded below on $\pi^{-1}(\af^n)$. Thus, we have
    \[
        \dfrac{r(f)}{\deg f} \lambda_{\phi^*H ,v}(Y)  >  \lambda_{\pi^*H ,v}(Y) - C \quad \text{for all }Y \in \pi^{-1}\bigl(\af^n_{\overline{K}}\bigr).
    \]
    Let $X = \pi(Y)$. Since $\lambda_{H,v} = ||X||_v +O(1)$ and $\phi(Y) = f(X)$, we get the desired result.
    \end{proof}

\section{Nonarchimedean Green functions for a strongly regular pair}

    Kawaguchi \cite{K2} showed that the local Green functions of a regular polynomial automorphism have nice properties. In Section~3,4 and 5, we show that the local Green functions of a strongly regular pair also have similar properties.
    In this section, we study the local Green functions of a strongly regular pair of polynomial maps at a nonarchimedean place $v$. Note that we use the topology induced by a metric $|\cdot| = |\cdot|_v$ on $K_v$, not Zariski topology.
    We refer \cite{K2} for details on the local Green functions of polynomial maps.

    \begin{prop}
    Let $f:\af^n \rightarrow \af^n$ be a polynomial map defined over $K$. Then, \emph{the local Green function of $f$ at $v$} is well-defined:
    \[
    G_f (X) = G_{f,v}(X) := \lim_{m \rightarrow \infty} \dfrac{1}{\deg f^m} \log^+ ||f^m(X)||.
    \]
    Moreover, we have
    \[
    G_f(X) \leq \log^+ ||X|| + C
    \]
    for some constant $C$.
    \end{prop}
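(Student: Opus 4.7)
\emph{Proof strategy.} I would prove both assertions simultaneously via a Tate-style telescoping argument. Set $d:=\deg f$ (so that $\deg f^m = d^m$ by the algebraic stability built into the standing strongly regular hypothesis of Section~3) and $a_m:=d^{-m}\log^+||f^m(X)||_v$; the whole proof reduces to controlling $a_{m+1}-a_m$ from above.

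Because $v$ is nonarchimedean, the ultrametric inequality applied to the monomials of $f=(f_1,\dots,f_n)$ yields a constant $C_2\geq 1$, depending only on the $v$-adic sizes of the coefficients of $f$, such that
\[
||f(Y)||_v \;\leq\; C_2\,\max\bigl(||Y||_v^d,\,1\bigr) \qquad \bigl(Y\in \af^n_{\mathbb{C}_v}\bigr),
\]
equivalently $\log^+||f(Y)||_v \leq d\log^+||Y||_v + \log C_2$. Applying this with $Y=f^m(X)$ and dividing by $d^{m+1}$ gives $a_{m+1}-a_m \leq \log C_2/d^{m+1}$, and summing a geometric series produces $a_{m+k}\leq a_m + \log C_2/\bigl(d^m(d-1)\bigr)$ for every $k\geq 0$. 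Taking $m=0$ already yields the uniform bound $a_m\leq \log^+||X||_v + \log C_2/(d-1)$, which will become the desired estimate $G_f(X)\leq \log^+||X||_v + C$ with $C=\log C_2/(d-1)$ once the limit is known to exist.

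For existence of the limit, combine the tail bound above with the trivial inequality $a_m\geq 0$: choose a subsequence $m_k\to\infty$ realizing $a_{m_k}\to\liminf_m a_m$; then
\[
\limsup_{n}\,a_n \;\leq\; \limsup_k\,\sup_{n\geq m_k}a_n \;\leq\; \lim_k\Bigl(a_{m_k}+\frac{\log C_2}{d^{m_k}(d-1)}\Bigr) \;=\; \liminf_m a_m,
\]
so $\limsup = \liminf$ and the sequence converges. Passing to the limit in the uniform bound then delivers the claimed estimate on $G_f$.

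I do not anticipate a real obstacle in this particular proposition. The argument needs only the nonarchimedean polynomial upper bound on $||f(\cdot)||_v$ together with the non-negativity of $\log^+$; none of the finer hypotheses (finiteness of the $D$-ratio, commutation with the partner $g$, pairwise disjoint indeterminacy, etc.) enter at this stage. They will become essential in the subsequent statements of Section~3 where matching lower bounds, continuity of $G_f$, and functoriality under the pair $\{f,g\}$ are developed --- at which point Lemma~\ref{Le1} and the strongly regular structure will be used in earnest.
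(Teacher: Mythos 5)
Your argument is correct, and it is worth noting that the paper itself offers no proof here: it simply cites Kawaguchi \cite[\S 1]{K2}, where the same Tate-style telescoping estimate is carried out. So you have supplied in full the standard argument that the paper outsources: the one-step bound $\log^+\|f(Y)\|_v \le d\,\log^+\|Y\|_v + \log C_2$, division by $d^{m+1}$, and summation of the geometric series, giving both convergence and the upper bound $G_f \le \log^+\|\cdot\|_v + \log C_2/(d-1)$; your $\liminf/\limsup$ step is just a reformulation of the usual observation that $a_m + \log C_2/\bigl(d^m(d-1)\bigr)$ is non-increasing and bounded below. Two small points deserve explicit mention. First, the geometric series (and indeed the statement itself) requires $d=\deg f \ge 2$: for $d=1$ (e.g.\ a translation) the limit need not exist, so you should say that you are invoking the hypothesis $\deg f\ge 2$ from the strongly regular setting rather than leaving it implicit. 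Second, the proposition normalizes by $\deg f^m$ rather than $(\deg f)^m$; these agree only under algebraic stability, which you correctly flag, but your telescoping actually proves convergence of $d^{-m}\log^+\|f^m(X)\|_v$ for any polynomial map of degree $d\ge 2$, stability or not --- a slightly more general statement than the one you attribute to the standing hypotheses. Finally, your remark that the ultrametric inequality gives $C_2$ is fine for this nonarchimedean section; the identical argument at archimedean places only changes $C_2$ by a factor counting monomials, which is how the paper handles Section~5.
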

    \begin{proof}
    See \cite[\S.1]{K2}.
    \end{proof}

    \begin{thm}\label{two}
        Let $K_v$ be an algebraically closed field with nontrivial nonarchimedean
        valuation and let $S = \{ f, g :\af^n \rightarrow \af^n\}$ be a strongly regular pair of polynomial maps defined over $K_v$. Then there are
        open subsets $V_f, V_g$ of $\af^n_{K_v}$ and constants $C_f, C_g$ with the following properties.
        \begin{enumerate}
        \item  $V_f \cup V_g  = \af^n_{K_v}$.
        \item  $G_f (X) \geq \log^+||X|| + C_f$ on $V_f.$
        \item  $G_g (X)  \geq \log^+||X||  + C_g$ on $V_g.$
        \end{enumerate}
    \end{thm}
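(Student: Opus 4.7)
My plan is to cover $\af^n_{K_v}$ by two $v$-adic open sets $V_f, V_g$ on which a single application of $f$ or $g$ already exhibits the expected maximal growth rate, and then upgrade this one-step bound to a bound on the Green function by iteration. The delicate step will be forward invariance: I will use the commutation $f\circ g = g\circ f$ together with the degree drop $\deg(f\circ g) < \min(\deg f,\deg g)$ to show that, once $||X||$ is sufficiently large, the forward $f$-orbit of any point of $V_f$ remains inside $V_f$. This is the one place where the commutativity and the degree-drop condition from the strong regularity hypothesis really enter, and without them the one-step bound cannot be propagated along the orbit.

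The initial covering will come from a non-archimedean projective Nullstellensatz. Let $F,G:\af^{n+1}_{K_v}\to\af^{n+1}_{K_v}$ be the homogeneous lifts of the extensions of $f,g$ to $\pp^n$, of degrees $d_f,d_g$. Since $I(f)\cap I(g)=\emptyset$ in $\pp^n$ and $K_v$ is algebraically closed, the cones $F^{-1}(0)$ and $G^{-1}(0)$ meet only at the origin of $\af^{n+1}$, so the homogeneous ideal $(F_0,\ldots,F_n,G_0,\ldots,G_n)$ contains some power of the irrelevant ideal. Writing out such a Bezout-type identity $X_i^N = \sum_j A_{ij}F_j + \sum_j B_{ij}G_j$ and applying $|\cdot|_v$ together with the ultrametric inequality produces a constant $c_0>0$ with
\[
\max\Bigl(\tfrac{1}{d_f}\log^+||f(X)||,\ \tfrac{1}{d_g}\log^+||g(X)||\Bigr) \geq \log^+||X|| + \log c_0
\]
for every $X\in\af^n_{K_v}$. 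Fixing small $\delta>0$, I set
\[
V_f := \{X : \log^+||f(X)|| > d_f(\log^+||X|| + \log c_0 - \delta)\}
\]
and define $V_g$ symmetrically. These are $v$-adic open subsets of $\af^n_{K_v}$ since they are defined by strict inequalities of continuous functions, and the displayed inequality forces $V_f\cup V_g = \af^n_{K_v}$.

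For the invariance and iteration, assume for contradiction that $X\in V_f$ but $f(X)\notin V_f$; then $f(X)\in V_g$ by the covering property, so combining the defining inequalities of $V_f$ and $V_g$ yields
\[
\log^+||(g\circ f)(X)|| > d_f d_g \log^+||X|| + \mathrm{const}.
\]
On the other hand, $g\circ f = f\circ g$ is a polynomial map of degree $\deg(f\circ g)$, so the elementary upper bound gives $\log^+||(g\circ f)(X)||\leq\deg(f\circ g)\log^+||X||+O(1)$. Since $d_f d_g>\deg(f\circ g)$, these are incompatible once $\log^+||X||$ is sufficiently large, yielding a uniform bound $||X||\leq R$. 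Enlarging $R$ further so that the one-step inequality in $V_f$ already forces $||X||>R\Rightarrow ||f(X)||>R$ (possible because $d_f\geq 2$), induction gives $f^m(X)\in V_f$ and $||f^m(X)||>R$ for all $m\geq 0$ whenever $X\in V_f$ with $||X||>R$. Telescoping the one-step bound along the $f$-orbit and dividing by $d_f^m$ then yields in the limit $G_f(X) \geq \log^+||X|| + C$ for some constant $C$; for $X\in V_f$ with $||X||\leq R$ the bound is immediate from $G_f\geq 0$ and $\log^+||X||\leq\log R$. Taking $C_f$ to be the minimum of these two constants finishes the bound on $V_f$, and the argument on $V_g$ is symmetric.
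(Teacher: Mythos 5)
Your proof is correct and follows essentially the same route as the paper: a Nullstellensatz/elimination identity coming from $I(f)\cap I(g)=\emptyset$ to produce the covering $V_f\cup V_g=\af^n_{K_v}$, forward invariance of $V_f$ obtained by playing the combined lower bound for $\log^+||g(f(X))||$ against the upper bound $\deg(f\circ g)\log^+||X||+O(1)$ furnished by $f\circ g=g\circ f$ and $\deg(f\circ g)<\min(\deg f,\deg g)$, and then a telescoping sum to pass from the one-step estimate to $G_f$. The only (harmless) deviation is bookkeeping: the paper tunes $\epsilon,\delta$ so that $f(V_f)\subset V_f$ holds on all of $V_f$ and the one-step bound is global there, while you establish invariance only outside a large ball of radius $R$ and absorb the bounded region using $G_f\geq 0$, which still yields the stated constants $C_f, C_g$.
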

    \begin{proof}
        \begin{enumerate}
        \item  To find such open sets, we need clarify the relation between $f,g$ and $f\circ g$.  Let $d_f, d_g$ be degrees of $f$ and $g$ respectively. Since $f,g$ are polynomial maps, we can consider $f,g$ as rational maps on $\pp^n$ and write
        \[
        f(X) = \left[ X_0^{d_f}, F_1, \cdots, F_n \right], \quad g(X) = \left[ X_0^{d_g}, G_1, \cdots, G_n \right]
        \]
        where $F_i, G_i$ are homogeneous polynomials of degree $d_f$ and $d_g$ respectively. Because $f,g$ are jointly regular, $\{X_0, F_i,G_i\}_{i\in I }$ has no nontrivial common zeros. Hence, by Hilbert's Nullstellensatz, we have an integer $M$ and homogeneous polynomials $P_{ij},Q_{ij}, R_{j}$ satisfying
        \begin{equation}\tag{A}\label{HN}
        \sum_{i=1}^n P_{ij}(X) F_i(X) + \sum_{i=1}^n Q_{ij}(X) G_i(X) + X_0 R_j(X) = X_j^M.
        \end{equation}
        Note that $\deg P_{ij} = M - d_f, \deg Q_{ij} = M - d_g$ and $\deg R_j = M-1$.

        By the assumption $\deg {f\circ g} < \min (d_f , d_g)$ and $f\circ g = g \circ f$, all component of $f\circ g$ and $g\circ f$ has a common divisor of degree $l = d_f d_g - \deg (f\circ g)$. Moreover, since the first component is $X_0^{d_fd_g}$, the common divisor is $X_0^l$:
        \begin{eqnarray*}
        f \circ g  &=&  \left[ X_0^{d_f d_g},F_1\bigl( g(X) \bigr), \cdots, F_n\left( g(X) \right) \right] \\
        &=& \left[ X_0^{d_f d_g-l},F_1\bigl( g(X) \bigr)/X_0^l, \cdots, F_n\bigl( g(X) \bigr) /X_0^l \right]
        \end{eqnarray*}
        and
        \begin{eqnarray*}
        g \circ f &=& \left[ X_0^{d_f d_g},G_1 \bigl( f(X) \bigr), \cdots, G_n(f(X)) \right] \\
        &=& \left[ X_0^{d_f d_g-l},G_1\bigl( f(X) \bigr)/X_0^l, \cdots, G_n\bigl( f(X) \bigr) /X_0^l \right].
        \end{eqnarray*}
        Hence, we get
        \begin{equation}\label{comp}\tag{B} F_i\bigl( g(X) \bigr) = G_i\bigl( f(X) \bigr) = X_0^l J_i(X)\quad \text{for all}~i=0,\cdots, n
        \end{equation} where $J_i(X)$ are homogeneous polynomials of degree $d_J = d_fd_g - l = \deg (f\circ g)$.

        Let
        \[
        C_0: = \max \bigl\{ |\alpha| ~|~ \alpha=1~\text{or a coefficients of one of }F_i, G_i, P_{ij}, Q_{ij},R_j, J_i\bigr\}
        \]
        and take two positive numbers
        \[
        \epsilon = C_0^{-4}, \quad\text{and}\quad \delta = C_0^{-2}.
        \]
        Then, $\epsilon, \delta$ are positive constants satisfying the following:
        \begin{equation}\tag{C}\label{epde}
        \begin{cases}
        \epsilon < \dfrac{1}{C_0}, \delta < \dfrac{1}{C_0 }, \\
        \epsilon^{d - d_J } C_0 < \delta ~\text{where}~ d=\min (d_f, d_g)~\text{and}\\
        \epsilon^l C_0 < \delta^2 ~\text{where}~ l={d_f d_g - d_J}.
        \end{cases}
        \end{equation}
        Note that $d_f d_g\geq 2$ and $d_J < \min (d_f, d_g)$ so that $l$ is an integer larger than $2$.

        Define two open sets
        \[
        V_f : =  V_{f,\epsilon, \delta} = \left\{ X \in \af^n_{K_v} ~\left|~ ||X||< \dfrac{1}{\epsilon} ~\text{or}~ \max\bigl(||f(X)||,1\bigr) > \delta \max\bigl(||X||^{d_f}, 1\bigr) \right. \right\}
        \]
        and
        \[
        V_g := V_{g,\epsilon, \delta} = \left\{ X  \in \af^n_{K_v} ~\left|~ ||X||< \dfrac{1}{\epsilon} ~\text{or}~ \max\bigl(||g(X)||,1\bigr) > \delta \max\bigl(||X||^{d_g}, 1\bigr) \right. \right\}.
        \]

        Then, $V_f \cup V_g  = \af^n_{K_v}$: suppose there is a point $X=[1,X_1, \cdots, X_n] \in \af^n \setminus (V_f \cup V_g)$. Then $X$ satisfies the following three inequalities
        \[
        ||X|| \geq \dfrac{1}{\epsilon} >1, \quad \delta \max\bigl(||X||^{d_f}, 1\bigr) \geq \max\bigl(||f(X)||,1 \bigr)
         \]
         and
         \[
         \delta \max\bigl(||X||^{d_g}, 1\bigr) \geq \max\bigl(||g(X)||,1\bigr).
        \]
        We may assume that $||X|| = \sup |X_i| = |X_n|$. Applying $||\cdot||$ to $(\ref{HN})$ when $j=n$, we get
        \begin{eqnarray*}
        ||X||^M= |X_n^M|  &=& \left\| \sum_i P_{in} f_i + \sum_i Q_{in} g_i + R_n \right\| \\
        &\leq& \max \bigl( ||P_{in}||\cdot ||f(X)||, ||Q_{in}||\cdot  ||g(X)||, |R_n|  \bigr)\\
        &\leq& \max \bigl(  C_0  ||X||^{M-d_f} \cdot \delta ||X||^{d_f}, C_0 ||X||^{M-d_g}\cdot \delta ||X||^{d_g}, C_0 ||X||^{M-1} \bigr)\\
        &<& ||X||^M
        \end{eqnarray*}
        which is a contradiction.\\

        \item[(2),(3)] We start with the following claims.
        \begin{claim}\label{inclusion}
        \[f(V_{f}) \subset V_{f}.\]
        \end{claim}
        \begin{proof}[Proof of Claim~\ref{inclusion}]
         Let $X = [1, X_1, \cdots, X_n]\in V_f$ be a point such that $f(X) \not \in V_f$. By definition of $V_f$, $f(X)$ should satisfy two inequalities:
        \begin{equation}\label{f(X)}\tag{D}
        ||f(X)|| \geq \dfrac{1}{\epsilon} \quad \text{and} \quad  \max\bigl(||f\bigl( f(X) \bigr)||,1\bigr) \leq \delta \max\bigl(||f(X)||^{d_f}, 1) .
        \end{equation}

        We showed $\af^n = V_f \cup V_g$ on (1). So, $f(X)$, which lies outside of $V_f$, is an element of $V_g$: $X$ satisfies either
         \[
         ||f(X)||< \dfrac{1}{\epsilon} \quad \text{or} \quad \delta \max\bigl(||f(X)||^{d_g}, 1\bigr) < \max\bigl(||g\bigl(f(X) \bigr)||,1\bigr).
         \]
        However, because of (\ref{f(X)}), the first one cannot happen: $X$ should satisfy $(\ref{f(X)})$ and
        \begin{equation}\label{Vg}\tag{E} \delta \max\bigl(||f(X)||^{d_g}, 1\bigr) < \max\bigl(||g(f(X))||,1\bigr).
        \end{equation}

        Let's show that there is no such $X \in V_f$ satisfying (\ref{f(X)}) and (\ref{Vg}). $X \in V_f$ satisfies either
        \[
        ||X|| \geq \dfrac{1}{\epsilon} \quad \text{or} \quad \max\bigl(||f(X)||,1) > \delta \max\bigl(||X||^{d_f}, 1).
        \]
        If $||X|| < \dfrac{1}{\epsilon}$, then consider $||g\bigl(f(X)\bigr) ||$. Since $f\circ g$ is a polynomial map of degree $d_J$, we have
        \[
        ||g\bigl(f(X)\bigr) || = \max ||1^l J_i(X)|| \leq C_0 ||X||^{d_J} \leq C_0 \left( \dfrac{1}{\epsilon} \right)^{d_J}.
        \]
        On the other hand, because of (\ref{Vg}) and the first inequality in (\ref{f(X)}), we have
        \[
        \max\bigl(||g\bigl(f(X) \bigr)||,1\bigr) > \delta \max\bigl(||f(X)||^{d_g}, 1\bigr) > \delta ||f(X)||^{d_g} > \delta  \left( \dfrac{1}{\epsilon} \right)^{d_g}.
        \]
        So, we get $C_0 \epsilon^{d_g - d_J} > \delta $.
        But, from a property of $\epsilon$ and $\delta$, we have
        \[
        \epsilon^{d_g - d_J} C_0 \leq \epsilon^{d-d_J} C_0< \delta \quad\text{where }d = \min(d_f,d_g),
        \]
        which is a contradiction. So, $X$ should satisfy
        \[
        ||X|| \geq \dfrac{1}{\epsilon} \quad \text{and} \quad \max\bigl(||f(X)||,1) > \delta \max\bigl(||X||^{d_f}, 1).
         \]

        However, combine it with $(\ref{Vg})$ and get
        \begin{equation}\label{AA}\tag{F} \max\bigl(||g(f(X))||,1\bigr) \geq \delta \max\bigl(||f(X)||^{d_g}, 1\bigr) \geq \delta^2 \max\bigl(||X||^{d_fd_g} ,1).
        \end{equation}
        On the other hand, we have
        \begin{equation}\label{BB}\tag{G}
        \max\bigl(||g(f(X))||,1\bigr) \leq C_0 ||X||^{d_J} \leq C_0 \epsilon^l ||X||^{d_f d_g  }.
        \end{equation}
        Thus, (\ref{AA}) and (\ref{BB}) induce that $C_0 \epsilon^l > \delta^2$, which contradicts to (B).
     \end{proof}

    \begin{claim}\label{const1}
        For any $X \in V_f$, we have
        \[
        \max\bigl(||f(X)||,1\bigr) \geq C_f\bigl(\max||X||^{d_f} , 1\bigr)
        \]
        where $C_f = \min \{\delta, \epsilon^{d_f}\}.$
    \end{claim}

    \begin{proof}
        Let $X\in V_{f}$: $X$ satisfies either
        \[
        ||X|| \leq \dfrac{1}{\epsilon}  \quad \text{or} \quad \max (||f(X)||,1) \geq \delta \max ( ||X||^{d_f}, 1).
        \]
        If the former one holds, then we have
        \[
        C_f ||X||^{d_f} \leq C_f \left( \dfrac{1}{\epsilon} \right)^{d_f} \leq 1 \leq \max(||f(X)||,1).
        \]
        For the latter case, it is clear because $C_f \leq \delta$.
    \end{proof}

    \vspace{5mm}

    Let's complete the proof of $(2)$ and $(3)$ by using Claim~1 and Claim~2. Let $X\in V_f$. Then, by Claim~\ref{inclusion},
    $f^m(X) \in V_{f}$ for all $m\geq 0$. Apply the logarithm to the inequality in Claim~\ref{const1} and get
        \[
        \log^+||f(Y_m)|| \geq d_f \log^+||Y_m|| + \log C_f
        \]
        for all $Y_m = f^m(X)\in V_{f}$. Therefore, we get the desired result by the telescoping sum:
        \begin{eqnarray*}
        G_f(X) &=& \lim_{m \rightarrow \infty} \dfrac{1}{d_f^m} \log^+ ||f^m(X)|| \\
        &=&  \log^+||X|| + \sum_{m=1}^\infty \left( \dfrac{1}{d_f^m} \log^+ ||f^m(X)|| - \dfrac{1}{d_f^{m-1}} \log^+ ||f^{m-1}(X)|| \right) \\
        &\geq& \log^+||X|| + \sum_{m=1}^\infty \dfrac{1}{d_f^{m-1}} C_f.
        \end{eqnarray*}
        \end{enumerate}
    \end{proof}

    Recall that the local Green functions are defined by the pointwise limit. It is not enough for the construction of a semipositive metric later - we need that the dynamical metric is semipositive to get the equidistribution. So, we want that a sequence $\left\{\dfrac{1}{d_f^m} \log^+ ||f^m(X)||\right\}$ is uniform convergent.

    \begin{prop}\label{uniform}
        \[
        G_f= \displaystyle \lim_{m \rightarrow \infty} \dfrac{1}{d_f^m} \log^+ ||f^m(X)||
         \quad \text{and} \quad
         G_g= \displaystyle \lim_{m \rightarrow \infty} \dfrac{1}{d_g^m} \log^+ ||g^m(X)||
         \]
         are uniformly convergent on $V_f$ and $V_g$ respectively.
    \end{prop}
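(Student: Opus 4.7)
The plan is to deduce uniform convergence from a standard telescoping argument, using as inputs the two one-step estimates already established in the proof of Theorem~\ref{two}: the forward invariance $f(V_f) \subset V_f$ (Claim~\ref{inclusion}) and the lower bound $\max(\|f(Y)\|,1) \geq C_f \max(\|Y\|^{d_f},1)$ on $V_f$ (Claim~\ref{const1}).

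First I would pin down a uniform two-sided comparison between $\log^+\|f(Y)\|$ and $d_f\log^+\|Y\|$ on $V_f$. The upper bound $\log^+\|f(Y)\| \leq d_f\log^+\|Y\| + C_+$ is global and follows from the nonarchimedean strong triangle inequality: writing $f = [X_0^{d_f}, F_1, \ldots, F_n]$ and letting $C_0$ be the maximum modulus of a coefficient of any $F_i$, one has $\|f(Y)\| \leq \max(1, C_0 \|Y\|^{d_f})$, hence $\log^+\|f(Y)\| \leq d_f\log^+\|Y\| + \log^+ C_0$. The lower bound is immediately Claim~\ref{const1} after taking logarithms: $\log^+\|f(Y)\| \geq d_f\log^+\|Y\| + \log C_f$ for $Y \in V_f$. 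Setting $M_f := \max\bigl(\log^+ C_0,\, |\log C_f|\bigr)$ gives $\bigl|\log^+\|f(Y)\| - d_f\log^+\|Y\|\bigr| \leq M_f$ uniformly for all $Y \in V_f$.

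Next I would use forward invariance to propagate this bound along orbits. For any $X \in V_f$ Claim~\ref{inclusion} gives $Y_m := f^m(X) \in V_f$ for every $m \geq 0$, so the displayed two-sided estimate applies at every step. Dividing by $d_f^{m+1}$ yields
\[
\left|\frac{1}{d_f^{m+1}}\log^+\|f^{m+1}(X)\| - \frac{1}{d_f^m}\log^+\|f^m(X)\|\right|
= \frac{1}{d_f^{m+1}}\bigl|\log^+\|f(Y_m)\| - d_f\log^+\|Y_m\|\bigr|
\leq \frac{M_f}{d_f^{m+1}},
\]
and the bound on the right is independent of $X \in V_f$. Since $d_f \geq 2$, the geometric series $\sum_m M_f/d_f^{m+1}$ converges, so the sequence $\{d_f^{-m}\log^+\|f^m(X)\|\}_m$ is uniformly Cauchy on $V_f$ and therefore converges uniformly to $G_f$. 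The identical argument applied to $g$, $d_g$, $V_g$, $C_g$ proves the assertion for $G_g$ on $V_g$.

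There is no real obstacle here: everything hinges on pairing the forward invariance of $V_f$ with the matching lower bound from Claim~\ref{const1}, both of which were the whole point of constructing $V_f$ in Theorem~\ref{two}. The only thing to watch is that the upper bound $\log^+\|f(Y)\| \leq d_f\log^+\|Y\| + \log^+C_0$ is genuinely global and so does not need $V_f$; only the lower bound does, and that is exactly where $V_f$ and its invariance are needed.
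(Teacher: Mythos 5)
Your proof is correct, and it reaches the uniform Cauchy estimate by a slightly different route than the paper. Both arguments share the same skeleton (a one-step two-sided comparison of $\log^+\|f(Y)\|$ with $d_f\log^+\|Y\|$ on $V_f$, propagated along the orbit and summed as a geometric telescoping series, using $d_f\geq 2$), but the source of the one-step bound differs. You reuse exactly the two facts proved inside Theorem~\ref{two} --- the forward invariance $f(V_f)\subset V_f$ of Claim~\ref{inclusion} and the lower bound of Claim~\ref{const1} --- together with the ultrametric upper bound $\|f(X)\|\leq C_0\max(1,\|X\|)^{d_f}$, so your constant is $\max(\log^+C_0,|\log C_f|)$ and the $D$-ratio never enters. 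The paper instead invokes Lemma~\ref{Le1}, i.e.\ the inequality $C_1\|X\|^{d_f/r(f)}\leq\|f(X)\|\leq C_2\|X\|^{d_f}$, and splits according to the two defining conditions of $V_f$ ($\|X\|<\epsilon^{-1}$ versus the $\delta$-inequality), so its constant involves $r(f)$ and the hypothesis $r(f)<\infty$ is genuinely used in this step; it also works with the plain logarithm of $\|f^j(X)\|$ rather than $\log^+$, leaving implicit both the forward invariance needed to apply the one-step bound at every iterate and the harmless but real issue of iterates of small norm, both of which your $\log^+$ formulation and explicit appeal to Claim~\ref{inclusion} handle cleanly. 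In short, your argument is a bit more economical (no $D$-ratio input here), while the paper's argument yields the stronger two-sided control of the ratio $\|f(X)\|/\|X\|^{d_f}$ on all of $V_f$, which is in the spirit of the $c(f)$/$r(f)$ formalism it develops; either suffices for the proposition.
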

    \begin{proof}
        Let $X \in V_f$: $X$ satisfies either
        \[
        ||X|| < \epsilon^{-1} \quad\text{or} \quad \max (||f(X)||,1) > \delta \max(||X||^{d_f},1).
        \]

        By Lemma~\ref{Le1}, we have
        \begin{equation}\label{rineq}\tag{H}
        C_1 ||X||^{\frac{d_f}{r(f)}} \leq  ||f(X)|| \leq C_2 ||X||^{d_f}
        \end{equation}
        where $r(f)$ is the $D$-ratio of $f$, which is not smaller than 1 \cite[Proposition~4.5]{Le2}. Divide (\ref{rineq}) by $||X||^{d_f}$ to get
        \[
        C_1 ||X||^{\frac{d_f}{r(f)} - d_f} \leq  \dfrac{||f(X)||}{||X||^{d_f}} \leq C_2.
        \]

        If $||X|| < C_3 = \epsilon^{-1}$, then the lower bound is smaller than $C_1 C_3^{\frac{d_f}{r(f)} - d_f}$ because $\frac{d_f}{r(f)} - d_f \leq 0$ and $||X|| < C_3$. Take the logarithm and the absolute value to get
        \[
        \left| \log \dfrac{||f(X)||}{||X||^{d_f}}  \right| \leq  \max\left( \bigl| \log  C_1   \bigr| + \left( {d_f} - \dfrac{{d_f}}{r(f)} \right) \bigl| \log  C_3   \bigr|, \bigl| \log  C_2   \bigr| \right).
        \]

        If $||X|| \geq \epsilon^{-1}$ and $\max (||f(X)||,1) > \delta \max(||X||^{d_f},1)$, then $\delta||X||^{d_f} \geq \delta \epsilon^{-d_f} \geq 1$. Thus, we have
        \[
        \max (||f(X)||,1) > \delta ||X||^{d_f} \geq 1,
        \]
        which guarantees $||f(X)||>1$. Hence, we get
        \[
        ||f(X)|| > \delta||X||^{d_f} \quad \Rightarrow \quad
        \delta \leq  \dfrac{||f(X)||}{||X||^{d_f}} \leq C_2
        \]
        and
        \[
        \left| \log \dfrac{||f(X)||}{||X||^{d_f}}  \right| \leq  \max\left(  -\log \delta, \bigl| \log C_2 \bigr|, \right).
        \]

        Let
        \[
        M = \max\left( \bigl| \log  C_1   \bigr| + \left( {d_f} - \dfrac{{d_f}}{r(\phi)} \right) \bigl| \log  C_3   \bigr|, \bigl| \log  C_2   \bigr|, -\log \delta\right).
        \]
        Then, by the telescoping sum, we get
        \begin{eqnarray*}
            \left| \dfrac{1}{{d_f}^{m}} \log ||f^{m}(X)|| - \dfrac{1}{{d_f}^{m+l}} \log ||f^{m+l}(X)||  \right|
            &\leq & \sum_{j=m}^{m+l}  \dfrac{1}{{d_f}^j}  \left| \log \dfrac{||f^{j+1}(X)||}{||f^j(X)||^{d_f}}  \right|   \\
            &\leq & \sum_{j=m}^{m+l}  \dfrac{1}{{d_f}^j} M
        \end{eqnarray*}
        and hence $\dfrac{1}{{d_f}^m}\log||f^m(X)||$ is uniformly convergent on $V_f$. Similarly,
        $\dfrac{1}{{d_g}^m}\log||g^m(X)||$ is uniformly convergent on $V_g$.
    \end{proof}

\section{Good reduction}

    To check the convergence of a sequence of adelic metrics $\langle \{||\cdot||_{L,v,m}~|~v\in M_K\} \rangle_{m\geq 0}$, we need that $\langle ||\cdot||_{L,v,m}\rangle_{m\geq 0}$ is trivial for all but finitely many $v\in M_K$. In this section, we check the concept ``good reduction'', which guarantees that local Green functions exactly equal to the local height function at all but finitely may prime spots. In Section~6,  we will use this fact to show that the dynamical adelic metric id well defined. We refer \cite{K2, S2} for details of good reduction.

    \begin{df}\label{good reduction}
        Let $S = \{ f, g :\af^n \rightarrow \af^n\}$ be a strongly regular pair of polynomial maps defined over a number field $K$. and let $v\in M_K^0$. We say that
        \emph{$S$ have good reduction at $v$} if
        \begin{enumerate}
            \item Every coefficient of $f,g$ is a $v$-adic integer so that we can extend $f,g$ to polynomial maps defined over $\ox_v$:
            \[
            f_v, g_v : \af^n_{\ox_v} \rightarrow \af^n_{\ox_v}.
            \]
            \item $\deg f_v = \deg f,  \deg g_v = \deg g.$
            \item The pair of reduction maps $\{\widetilde{f},\widetilde{g}\}$ is jointly regular on $\pp^n_{\overline{\mathbf{k}}_v}$, i.e., $\widetilde{f},\widetilde{g}$ have no common indeterminacy point on $\pp^n_{\overline{\mathbf{k}}_v}$.
        \end{enumerate}
    \end{df}

    It is quite easy to find prime spots $v$ which satisfy (1) and (2): if we find $v$ such that every coefficient of $f$ and $g$ is a $v$-adic unit, then $v$ satisfies (1) and (2). For the condition (3), we will find the equivalent condition using the Elimination Theory.

    \begin{lem}[Elimination Theory]\label{elimination}
        Let $\mathbf{k}$ be a field, let $\overline{\mathbf{k}}$ be an algebraic closure of $\mathbf{k}$, let $A$ be a commutative ring with unity, let $f_1, \cdots, f_t$ be homogeneous polynomials in $A[X_0, \cdots , X_n]$ and let $\rho : A \rightarrow \mathbf{k}$ be a homomorphism. We extend $\rho$ to $A[X_0, \cdots X_n]$ by applying $\rho$ to the
        coefficients of a polynomial. Then the followings are equivalent:
            \begin{enumerate}
                \item
                    \[ \left\{
                        \rho\left(r\right)  ~|~ r\in A, r(X_0, X_1, \cdots, X_n)^l \subset (f_1, \cdots f_t)~\text{for some}~l\geq 1
                        \right\}  = \{ 0\}
                    \]
                \item There is a $P\in \pp^n_{\overline{\mathbf{k}}_v}$ such that $f_i(X)=0$ for all $i=1,\cdots, t$.
            \end{enumerate}
    \end{lem}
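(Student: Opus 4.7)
The statement is a relative version of the projective Nullstellensatz, and the strategy is to reduce it to the absolute Nullstellensatz over $\mathbf{k}$ together with a Nakayama-style lifting from $\mathbf{k}$ back to $A$. Set $I=(f_1,\dots,f_t)\subset A[X_0,\dots,X_n]$ and $\rho(I)=(\rho(f_1),\dots,\rho(f_t))\subset \mathbf{k}[X_0,\dots,X_n]$, and let
\[
\mathfrak{a}=\bigl\{r\in A : r\cdot(X_0,\dots,X_n)^l\subseteq I \text{ for some } l\geq 1\bigr\},
\]
which is easily checked to be an ideal of $A$; condition $(1)$ is precisely $\mathfrak{a}\subseteq\ker\rho$. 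The classical projective Nullstellensatz, applied to $\rho(f_1),\dots,\rho(f_t)$ in $\mathbf{k}[X_0,\dots,X_n]$, yields that $(2)$ fails if and only if $(X_0,\dots,X_n)^l\subseteq\rho(I)$ in $\mathbf{k}[X]$ for some $l$. The lemma therefore reduces to showing
\[
\mathfrak{a}\not\subseteq\ker\rho \quad\Longleftrightarrow\quad (X_0,\dots,X_n)^l\subseteq\rho(I) \text{ for some } l\geq 1.
\]

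The forward implication is immediate: picking $r\in\mathfrak{a}\setminus\ker\rho$ and applying $\rho$ to $r\cdot(X_0,\dots,X_n)^l\subseteq I$, then dividing through by the unit $\rho(r)\in\mathbf{k}^{\times}$, gives $(X_0,\dots,X_n)^l\subseteq\rho(I)$. For the reverse, which is the substance of the proof, I would consider the graded $A$-module $M=A[X]/I=\bigoplus_{d\geq 0}M_d$; since each $f_i$ is homogeneous, $M_d$ is a quotient of the free $A$-module $A[X]_d$ of rank $\binom{n+d}{n}$ and is therefore finitely generated over $A$. The hypothesis $(X_0,\dots,X_n)^l\subseteq\rho(I)$ is equivalent to $M_l\otimes_A \mathbf{k}=0$. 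Writing $\mathfrak{p}=\ker\rho$ and noting that $\mathbf{k}$ is a field extension of the residue field $A_\mathfrak{p}/\mathfrak{p}A_\mathfrak{p}$, faithful flatness of field extensions descends this vanishing to $(M_l)_\mathfrak{p}/\mathfrak{p}(M_l)_\mathfrak{p}=0$. Nakayama's lemma applied to the finitely generated $A_\mathfrak{p}$-module $(M_l)_\mathfrak{p}$ forces $(M_l)_\mathfrak{p}=0$, producing an $s\in A\setminus\mathfrak{p}$ that annihilates $M_l$; equivalently, $s\cdot A[X]_l\subseteq I$. Multiplying by $A[X]_{d-l}$ for each $d\geq l$ propagates this to $s\cdot A[X]_d\subseteq I$ for every $d\geq l$, i.e.\ $s\cdot(X_0,\dots,X_n)^l\subseteq I$. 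Thus $s\in\mathfrak{a}$ with $\rho(s)\neq 0$, completing the argument.

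The principal obstacle is this lifting step: the hypothesis is a statement in the potentially large target field $\mathbf{k}$, while the conclusion demands a concrete element of $A$ outside $\ker\rho$. Bridging the gap forces a detour through the local ring $A_\mathfrak{p}$ and an application of Nakayama, which is only justified because homogeneity of the $f_i$ guarantees that each graded piece $M_d$ is finitely generated over $A$. Once the single-degree annihilator $s$ is produced, the passage from ``$s$ kills $M_l$'' to ``$s\cdot(X_0,\dots,X_n)^l\subseteq I$'' is routine graded bookkeeping.
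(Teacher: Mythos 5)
Your argument is correct, and it is genuinely different from what the paper does: the paper offers no proof at all, simply citing classical elimination theory (Hartshorne I.5.7A, van der Waerden Vol.~II \S 80), where the statement is obtained via resultant systems attached to forms with indeterminate coefficients. You instead give a self-contained commutative-algebra proof: identify condition (1) with $\mathfrak{a}\subseteq\ker\rho$ for the elimination ideal $\mathfrak{a}$, translate the failure of (2) through the projective Nullstellensatz into the vanishing of the graded piece $M_l\otimes_A\mathbf{k}$ of $M=A[X]/(f_1,\dots,f_t)$, and then descend from $\mathbf{k}$ to the residue field of $\mathfrak{p}=\ker\rho$ and apply Nakayama to the finitely generated module $(M_l)_{\mathfrak p}$ to manufacture an explicit $s\notin\mathfrak p$ with $s\cdot(X_0,\dots,X_n)^l\subseteq(f_1,\dots,f_t)$. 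This is essentially the modern proof of the main theorem of elimination theory (properness of $\pp^n$ via graded Nakayama), and it buys a cleaner, reference-free treatment valid over an arbitrary base ring $A$, whereas the classical route buys explicit resultant forms with integer coefficients. One small point to make explicit: the common zero in (2) lives in $\pp^n_{\overline{\mathbf{k}}}$ while you invoke the ideal containment $(X_0,\dots,X_n)^l\subseteq\rho(I)$ inside $\mathbf{k}[X_0,\dots,X_n]$; either note that homogeneous ideal membership in each degree is a linear-algebra condition unchanged under the field extension $\mathbf{k}\subseteq\overline{\mathbf{k}}$, or simply replace $\rho$ by its composition with $\mathbf{k}\hookrightarrow\overline{\mathbf{k}}$ (which does not affect condition (1)) and run your argument over $\overline{\mathbf{k}}$ throughout.
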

    \begin{proof}
        See \cite[I.5.7.A]{H} or \cite[Vol II, \S 80]{V}.
    \end{proof}

    \begin{prop}\label{3}
        Let $S = \{ f, g :\af^n \rightarrow \af^n\}$ be a strongly regular pair of polynomial maps defined over a number field $K$. Suppose that $S$ satisfies Definition~\ref{good reduction}(1), (2)  Then, the followings are equivalent.
        \begin{enumerate}
            \item $S$ has good reduction at $v \in M_K$, i.e., $S$ satisfies Definition~\ref{good reduction}(3).
            \item As ideals in $\ox_v[X_0, \cdots, X_n]$, we have
            \[
            (X_0, X_1, \cdots X_n)^N \subset \bigl( X_0, F_1(X), \cdots, F_n(X), G_1(X), \cdots, G_n(X) \bigr)
            \]
            for some integer $N$.
        \end{enumerate}
    \end{prop}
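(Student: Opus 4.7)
The plan is to apply the Elimination Theory (Lemma~\ref{elimination}) with $A = \ox_v$, $\mathbf{k} = \mathbf{k}_v$, $\rho: \ox_v \to \mathbf{k}_v$ the reduction homomorphism, and the list of homogeneous polynomials $f_1,\ldots,f_t$ taken to be $X_0, F_1,\ldots,F_n, G_1,\ldots,G_n \in \ox_v[X_0,\ldots,X_n]$. The whole argument is a chain of reformulations ending at this lemma.

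The first step is to rewrite the good reduction condition as a statement about common zeros. Since Definition~\ref{good reduction}(1),(2) are already assumed, the reductions can be written as $\widetilde{f} = [X_0^{d_f} : \widetilde{F}_1 : \cdots : \widetilde{F}_n]$ and $\widetilde{g} = [X_0^{d_g} : \widetilde{G}_1 : \cdots : \widetilde{G}_n]$ on $\pp^n_{\overline{\mathbf{k}}_v}$. Their indeterminacy loci are the common zero sets of $\{X_0^{d_f}, \widetilde{F}_i\}$ and $\{X_0^{d_g}, \widetilde{G}_j\}$, respectively, and these coincide (as zero sets) with those of $\{X_0, \widetilde{F}_i\}$ and $\{X_0, \widetilde{G}_j\}$. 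Thus Definition~\ref{good reduction}(3) is equivalent to the condition that the family $\{X_0, \widetilde{F}_1,\ldots,\widetilde{F}_n, \widetilde{G}_1,\ldots,\widetilde{G}_n\}$ has no common zero on $\pp^n_{\overline{\mathbf{k}}_v}$.

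Next I would apply Lemma~\ref{elimination} in the contrapositive direction: the nonexistence of such a common zero is equivalent to the existence of some $r \in \ox_v$ with $\rho(r) \neq 0$ and
\[
r \cdot (X_0,\ldots,X_n)^l \subset \bigl(X_0, F_1,\ldots,F_n, G_1,\ldots,G_n\bigr)
\]
for some $l\geq 1$. To finish, I would invoke that $\ox_v$ is a local ring with maximal ideal $\ker\rho$, so $\rho(r) \neq 0$ is equivalent to $r$ being a unit in $\ox_v$. Multiplying the containment above by $r^{-1}$ produces $(X_0,\ldots,X_n)^l \subset (X_0, F_1,\ldots,F_n, G_1,\ldots,G_n)$, which is condition (2) of the proposition with $N = l$. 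The converse is immediate: if (2) holds with some $N$, then $r=1$ satisfies $\rho(r)=1\neq 0$ and lies in the elimination set with $l=N$, so Lemma~\ref{elimination} forces no common zero after reduction, i.e., good reduction.

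There is no serious obstacle here — the whole statement is essentially a bookkeeping translation of Elimination Theory into the language of reduction mod $v$. The only subtle point to check carefully is the passage from the indeterminacy set of a map of the form $[X_0^d : F_1 : \cdots : F_n]$ to the vanishing of $\{X_0, F_i\}$, which is valid since $X_0^d$ and $X_0$ have the same zero locus; once this is noted, the rest is a direct invocation of Lemma~\ref{elimination} together with the local-ring characterization of units.
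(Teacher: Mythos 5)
Your proposal is correct and follows essentially the same route as the paper: both apply Lemma~\ref{elimination} over $A=\ox_v$ to the family $\{X_0, F_1,\ldots,F_n, G_1,\ldots,G_n\}$ and translate ``$\rho(r)\neq 0$'' into ``$r$ is a unit of $\ox_v$,'' which is exactly how the paper characterizes joint regularity of $\{\widetilde f,\widetilde g\}$ as the existence of $r\in I\cap\ox_v^*$. Your write-up merely makes explicit two steps the paper leaves implicit (multiplying the containment by $r^{-1}$, and the harmless replacement of $X_0^{d_f},X_0^{d_g}$ by $X_0$ in the common zero locus), so it is the same argument in slightly fuller detail.
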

    \begin{proof}
        Take a set
        \[
        I:=\left\{
        r\in \ox_v ~|~ r(X_0, X_1, \cdots, X_n)^l \subset (X_0, F_1(X),\cdots F_n(X), G_1(X), \cdots, G_n(X))~\text{for some}~l\geq 1
        \right\}.
        \]
        Then, Lemma~\ref{elimination} says that $\rho(I)=0$ if and only if $\widetilde{X}_0, \widetilde{F}_1, \cdots, \widetilde{F}_n, \widetilde{G}_1, \cdots, \widetilde{G}_n$ have a nontrivial common root in $\pp^n_{\overline{K}}$.
        Thus, $\{\widetilde{f},\widetilde{g}\}$ is jointly regular if and only if there exists $r \in I \cap \ox_{k_v}^*$.
    \end{proof}

    We will finish this section by showing that a strongly regular pair $S=\{f,g\}$ has good reduction at all but finitely many $v \in M_K$ and the local Green functions are trivial if $S$ has good reduction at $v \in M_K$.

    \begin{prop}
        Let $S = \{ f, g :\af^n \rightarrow \af^n\}$ be a strongly regular pair of polynomial maps defined over a number field $K$. Then, there is a finite subset $\mathcal{B}$ of $M_K$ such that $S$ has good reduction for all $v \not \in \mathcal{B}$.
    \end{prop}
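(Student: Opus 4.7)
The plan is to verify each of the three conditions in Definition~\ref{good reduction} separately, excluding only finitely many $v\in M_K$ for each, and then take $\mathcal{B}$ to be the union of the three finite bad sets.

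Conditions (1) and (2) are essentially automatic. The polynomial maps $f$ and $g$ are each determined by finitely many coefficients in $K$, and any nonzero element of $K$ is a $v$-adic integer at all but finitely many places; this disposes of (1). For (2), since $\deg f = d_f$ there is at least one monomial of degree $d_f$ whose coefficient $c\in K^*$ appears in some component of $f$; at the cofinitely many places where $c$ is a $v$-adic unit, the reduction $\widetilde{f}$ still has degree $d_f$, and the same argument works for $g$.

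The crucial condition is (3), and I will reduce it to Proposition~\ref{3}. Joint regularity of $\{f,g\}$ over $K$ means that $X_0,F_1,\ldots,F_n,G_1,\ldots,G_n$ have no nontrivial common zero in $\pp^n_{\overline{K}}$, so the projective Hilbert Nullstellensatz yields homogeneous polynomials $P_{ij},Q_{ij},R_j \in K[X_0,\ldots,X_n]$ satisfying
$$\sum_{i=1}^n P_{ij}(X) F_i(X) + \sum_{i=1}^n Q_{ij}(X) G_i(X) + X_0 R_j(X) = X_j^M$$
for $j=1,\ldots,n$ and some $M\geq 1$; this is exactly the identity~(\ref{HN}) already produced in the proof of Theorem~\ref{two}, only now with coefficients in $K$ rather than $K_v$. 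Combined with $X_0^M\in(X_0)$, this gives an ideal containment $(X_0,X_1,\ldots,X_n)^N \subset \bigl(X_0, F_1,\ldots,F_n,G_1,\ldots,G_n\bigr)$ in $K[X_0,\ldots,X_n]$ for some $N$. The finitely many coefficients of the $P_{ij},Q_{ij},R_j$ are $v$-adic integers outside a finite set of places, so the same identity persists in $\ox_v[X_0,\ldots,X_n]$ for all $v$ outside that set, and Proposition~\ref{3} yields good reduction at each such $v$.

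Taking $\mathcal{B}$ to be the union of the three finite bad sets completes the proof. There is no genuine obstacle: the content reduces to the observation that the Nullstellensatz identity can already be written with coefficients in $K$, after which clearing denominators once handles all but finitely many places simultaneously.
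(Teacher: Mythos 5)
Your proof is correct and takes essentially the same route as the paper: both exclude the finitely many places where the coefficients of $f$, $g$ and of the Nullstellensatz data $P_{ij}, Q_{ij}, R_j$ from identity (\ref{HN}) fail to be $v$-integral or the degrees drop, and then invoke the criterion of Proposition~\ref{3} to obtain condition (3) of good reduction. Your explicit observation that the identity (\ref{HN}) may be arranged with coefficients in $K$ before clearing denominators is a detail the paper leaves implicit, but the argument is the same.
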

    \begin{proof}
        Let $\mathcal{B}$ be a set of prime places $v$ such that $v$ is archimedean or $v$ satisfies the following conditions:
        \begin{itemize}
        \item all coefficients of $F_i, G_i, P_{ij}, Q_{ij}, R_j, J_i$ are $\mathcal{B}$-integers in $\ox_K$.
        \item Let $\rho_v : (\ox_K)_{\mathcal{B}} \rightarrow \mathbf{k}_v$ be the natural map. Then,
        \[
        \deg f = \deg (\rho_v(f)) \quad \text{and} \quad \deg g = \deg (\rho_v(g)).
        \]
        \end{itemize}
        Then, for any $v \not \in \mathcal{B}$, $S_v= \{ f_v, g_v : \af^n_{K_v} \rightarrow \af^n_{K_v}\}$ satisfies Definition~\ref{good reduction} (1), (2). Also, the second condition and the existence of $P_{ij}, Q_{ij}, R_j$ guarantee that $S_v$ satisfies Proposition~\ref{3} (2) so that $S$ has good reduction at $v$.
    \end{proof}

    \begin{prop}\label{good}
        Suppose that a strongly regular pair $S = \{ f, g :\af^n \rightarrow \af^n\}$ of polynomial maps defined over a number field $K$ has good reduction at $v\in M^0_K$. Then,
        \begin{enumerate}
            \item $G_{f, v} \leq \log ^+ ||\cdot ||_v$
            \item $G_{g, v} \leq \log ^+ ||\cdot ||_v$
            \item $G_{f, v} \geq \log ^+ ||\cdot ||_v$ on $V_{f,v,1,1}$
            \item $G_{f, v} \geq \log ^+ ||\cdot ||_v$ on $V_{g,v,1,1}$
        \end{enumerate}
    \end{prop}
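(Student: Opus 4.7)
\textbf{Proof plan for Proposition~\ref{good}.} The strategy is to re-examine the proof of Theorem~\ref{two} under the good reduction hypothesis and show that in this case the constants become trivial, so that we can take $\epsilon = \delta = 1$ (and hence $C_f = C_g = 1$). The upper bounds (1) and (2) follow from elementary ultrametric estimates, while the lower bounds (3) and (4) follow from Claims~\ref{inclusion} and~\ref{const1} with optimal constants.

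\textbf{Step 1: Normalizing the constants.} By Proposition~\ref{3}, good reduction at $v$ is equivalent to the existence of polynomials $P_{ij}, Q_{ij}, R_j$ with $\ox_v$-coefficients realizing the Nullstellensatz identity $(\ref{HN})$, and similarly (via $f\circ g = g\circ f$) for $J_i$ in $(\ref{comp})$. Consequently, every coefficient of $F_i, G_i, P_{ij}, Q_{ij}, R_j, J_i$ is a $v$-adic integer, so the constant $C_0$ appearing in the proof of Theorem~\ref{two} satisfies $C_0 \leq 1$. The conditions $(\ref{epde})$ on $\epsilon, \delta$ then hold with $\epsilon = \delta = 1$.

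\textbf{Step 2: Upper bounds (1) and (2).} Since all coefficients of $f$ are $v$-adic integers and $v$ is nonarchimedean, the ultrametric inequality gives $||f(X)||_v \leq \max(||X||_v, 1)^{d_f}$ for every $X \in \af^n_{K_v}$, i.e.\ $\log^+||f(X)||_v \leq d_f \log^+||X||_v$. Iterating yields $\log^+||f^m(X)||_v \leq d_f^m \log^+||X||_v$, and dividing by $d_f^m$ and letting $m\to\infty$ gives $G_{f,v}(X) \leq \log^+||X||_v$. The argument for $G_{g,v}$ is identical.

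\textbf{Step 3: Lower bounds (3) and (4).} Specializing Claim~\ref{inclusion} to $\epsilon = \delta = 1$ gives $f(V_{f,v,1,1}) \subset V_{f,v,1,1}$, and Claim~\ref{const1} with $C_f = \min(\delta, \epsilon^{d_f}) = 1$ gives
\[
\max\bigl(||f(X)||_v, 1\bigr) \geq \max\bigl(||X||_v^{d_f}, 1\bigr)
\]
for every $X \in V_{f,v,1,1}$. Taking logarithms yields $\log^+||f(X)||_v \geq d_f \log^+||X||_v$. Since $V_{f,v,1,1}$ is $f$-invariant, this inequality iterates to $\log^+||f^m(X)||_v \geq d_f^m \log^+||X||_v$, whence $G_{f,v}(X) \geq \log^+||X||_v$ on $V_{f,v,1,1}$. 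The same argument applied to $g$ gives the corresponding inequality for $G_{g,v}$ on $V_{g,v,1,1}$.

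\textbf{Expected obstacle.} The only subtle point is verifying that good reduction (Definition~\ref{good reduction}) really forces $C_0 \leq 1$. The coefficients of $F_i, G_i$ and of $J_i = F_i(g)/X_0^l = G_i(f)/X_0^l$ are automatically $v$-integral once those of $f, g$ are, but the auxiliary polynomials $P_{ij}, Q_{ij}, R_j$ from the Nullstellensatz identity $(\ref{HN})$ must be chosen with $\ox_v$-coefficients. This is exactly the content of Proposition~\ref{3}(2), which guarantees that such an $\ox_v$-integral Nullstellensatz representation exists precisely because of good reduction. Once this is in hand, the rest of the argument is a direct specialization of Theorem~\ref{two} with the explicit choice $\epsilon = \delta = 1$.
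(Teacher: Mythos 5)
Your proposal is correct and takes essentially the same route as the paper: the paper's own (very terse) proof also invokes Proposition~\ref{3} to conclude that all coefficients of $F_i, G_i, P_{ij}, Q_{ij}, R_j, J_i$ are $v$-adic units, so that the constant $C_0$ of Section~3 equals $1$ and the estimates of Theorem~\ref{two} specialize with $\epsilon=\delta=1$ and $C_f=C_g=1$. Your write-up merely makes that specialization explicit (ultrametric bound for (1)--(2), Claims~\ref{inclusion} and~\ref{const1} for (3)--(4)), which is exactly what the paper leaves implicit.
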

    \begin{proof}
        Suppose that $S = \{ f, g :\af^n \rightarrow \af^n\}$ has good reduction at $v$. Then, by Proposition~\ref{3}, it is clear that all $F_i, G_i, P_{ij}, Q_{ij}$, $R_j, J_i$ have coefficients in $\ox_v^*$, all coefficients in equalities in Section~3 will be of the norm $1$ and hence all inequality will be proved.
    \end{proof}

\section{The local Green functions at archimedean places}

    We only have finitely many archimedean places and hence the local Green functions need not be trivial. Still, we need the uniform convergence. Fortunately, we can prove the archimedean version of Theorem~\ref{two} by the triangle inequality so that we can prove the desired result.

    \begin{thm}\label{arch}
        Let $K_\infty$ be an algebraically closed field with archimedean norm $||\cdot||_{\infty}$ and let $S = \{ f, g :\af^n \rightarrow \af^n\}$ be a strongly regular pair. Then there are two open subsets $V_f, V_g$ of $\af^n$ and constants $C_f, C_g$ with the following properties.
        \begin{enumerate}
        \item  $G_f, G_g (X) \leq \log^+||X||_{\infty} + C_\infty.$
        \item  $f(V_f) \subset V_f, g(V_g) \subset V_g$ and $V_f \cup V_g  = \af^n$.
        \item  $G_f (X) \geq \log^+||X||_{\infty} + C_f$ on $V_f$ and $G_g (X)  \geq \log^+||X||_{\infty}  + C_g$ on $V_g.$
        \end{enumerate}
    \end{thm}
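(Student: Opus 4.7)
The plan is to transcribe the proof of Theorem~\ref{two} into the archimedean setting, systematically replacing the ultrametric identity $\|\sum\alpha_i\beta_i\|\leq\max_i\|\alpha_i\|\|\beta_i\|$ by the triangle inequality $\|\sum\alpha_i\beta_i\|_\infty\leq\sum_i\|\alpha_i\|_\infty\|\beta_i\|_\infty$. The additional multiplicative factors that appear when a maximum is replaced by a sum will be absorbed into a single enlarged coefficient bound chosen at the outset.

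For the upper bound $(1)$, writing $f=(F_1,\ldots,F_n)$ with each $F_i$ a sum of at most $N$ monomials of coefficient absolute value bounded by $C_0$, the triangle inequality yields $\|f(X)\|_\infty\leq NC_0\max(\|X\|_\infty^{d_f},1)$. Taking $\log$, dividing by $d_f^m$, and summing the resulting telescoping series gives $G_f(X)\leq\log^+\|X\|_\infty+\frac{d_f}{d_f-1}\log(NC_0)$, with the analogous bound for $g$; this establishes $(1)$.

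For $(2)$ and $(3)$, I would define $V_f,V_g$ by the same formulas as in Theorem~\ref{two}, but with the constants rescaled to compensate for the new multiplicities. Let $N$ now denote the maximum number of monomials in any of $F_i,G_i,P_{ij},Q_{ij},R_j,J_i$, set $C_0':=(2n+1)NC_0$, and take $\epsilon=(C_0')^{-4}$, $\delta=(C_0')^{-2}$; these continue to satisfy the three constraints $(\ref{epde})$ with $C_0$ replaced by $C_0'$. The verification $V_f\cup V_g=\af^n$ then applies the triangle inequality to the Nullstellensatz identity $(\ref{HN})$: for $X$ outside both sets with $\|X\|_\infty=|X_n|\geq1/\epsilon$, one obtains
\[
\|X\|_\infty^M\leq 2nNC_0\,\delta\,\|X\|_\infty^M + NC_0\,\epsilon\,\|X\|_\infty^M < \|X\|_\infty^M,
\]
a contradiction. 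The $f$-invariance $f(V_f)\subset V_f$ proceeds exactly as in Claim~\ref{inclusion}, with $C_0$ replaced by $C_0'$ throughout; the contradictory inequalities $C_0'\epsilon^{d_g-d_J}>\delta$ and $C_0'\epsilon^l>\delta^2$ are ruled out by the choice of $\epsilon,\delta$. The pointwise estimate $\max(\|f(X)\|_\infty,1)\geq C_f\max(\|X\|_\infty^{d_f},1)$ with $C_f=\min(\delta,\epsilon^{d_f})$ on $V_f$ is then immediate as in Claim~\ref{const1}, and the closing telescoping computation of Theorem~\ref{two}(2) gives $G_f\geq\log^+\|X\|_\infty+C_f\sum_{m\geq1}d_f^{-(m-1)}$ on $V_f$, and symmetrically for $g$.

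The main obstacle is combinatorial bookkeeping: one must verify that a single enlarged constant $C_0'$, depending only on $n$, $N$, and $C_0$, can be fixed at the beginning so that every inequality from the nonarchimedean proof survives the passage from $\max$ to $\sum$. Once this is done, no step requires a genuinely new idea, since the ultrametric identity was only ever used to upper-bound a weighted sum of products, a role that the triangle inequality fulfills up to the controlled multiplicative factor already absorbed into $C_0'$.
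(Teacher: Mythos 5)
Your proposal is correct and follows essentially the same route as the paper: the paper's own proof simply reruns the nonarchimedean argument of Theorem~\ref{two}, replacing the ultrametric inequality by the triangle inequality and absorbing the extra factors into an enlarged constant of the form $(2n+1)M_m\max\{\|P\|_{\infty},\|Q\|_{\infty},\|R\|_{\infty},1\}$, which is exactly your $C_0'=(2n+1)NC_0$ device. Your write-up is in fact more explicit than the paper's sketch about re-verifying the constraints on $\epsilon,\delta$ and the contradiction estimates, but no step differs in substance.
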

    \begin{proof}
        Sibony \cite{Si} proved that a rational map has the local Green function at archimedean place. So, we don't worry about the convergence.
        \begin{enumerate}
            \item Let $||f||_{\infty}$ be the maximum value of the coefficients of $f$. Then,
            \[
            \log^+||f(X)||_{\infty} \leq \log M_q||f||_{\infty} ||X||^q_{\infty}
            \]
            where $M$ is thee number of degree $q$ monomials of $(n+1)$-variables.
            \item It is totally same with the proof of Theorem~\ref{two} with new constant $C_\infty$.
            \item Let $P_{ij}$, $Q_{ij}$ and $R_{ij}$ are polynomials defined on previous section. Then, Theorem~\ref{two}~(2,3) exactly works with new constant
            \[
            C_\infty = (2n+1)M_m \max\left\{ ||P||_{\infty}, ||Q||_{\infty}, ||R||_{\infty}, 1 \right\}
            \]
            where $M_m$ is the number of degree $m$ monomials of degree $m$ and of $(n+1)$-variables.
        \end{enumerate}
    \end{proof}

    \begin{prop}\label{uniform2}
        $G_f$ is uniformly convergent on $V_f$ and $G_g$ is uniformly convergent on $V_g$
    \end{prop}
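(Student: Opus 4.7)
The plan is to mimic the proof of Proposition~\ref{uniform} (the nonarchimedean version) almost verbatim, replacing the ultrametric bounds by archimedean bounds and using Theorem~\ref{arch} in place of Theorem~\ref{two}. The crucial observation is that Lemma~\ref{Le1} was already established for an arbitrary place $v \in M_K$, including archimedean ones, so we still have the two-sided estimate
\[
C_1 \, \|X\|_\infty^{d_f/r(f)} \;\leq\; \|f(X)\|_\infty \;\leq\; C_2 \, \|X\|_\infty^{d_f}
\]
on $\af^n_{\overline{K}}$, where the upper bound just reflects that $f$ is a polynomial of degree $d_f$ (no archimedean loss beyond a constant coming from the triangle inequality in a homogeneous polynomial expansion, as in Theorem~\ref{arch}(1)).

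First I would fix $X \in V_f$ (the archimedean open set from Theorem~\ref{arch}) and split into the two cases built into the definition: either $\|X\|_\infty < \epsilon^{-1}$, or $\|X\|_\infty \geq \epsilon^{-1}$ together with $\max(\|f(X)\|_\infty,1) > \delta \max(\|X\|_\infty^{d_f},1)$. In the first case, the ratio $\|f(X)\|_\infty/\|X\|_\infty^{d_f}$ is trapped between $C_1\, (\epsilon^{-1})^{d_f/r(f)-d_f}$ and $C_2$, so $\bigl|\log\bigl(\|f(X)\|_\infty/\|X\|_\infty^{d_f}\bigr)\bigr|$ is bounded by an explicit constant depending only on $C_1,C_2,\epsilon,d_f,r(f)$. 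In the second case, once one notes that $\delta \|X\|_\infty^{d_f} \geq \delta \epsilon^{-d_f}$ exceeds $1$ (up to enlarging $\epsilon^{-1}$ slightly if necessary), the inequality forces $\|f(X)\|_\infty \geq \delta\|X\|_\infty^{d_f}$, so the ratio is trapped in $[\delta, C_2]$ and $\bigl|\log(\cdots)\bigr|$ is again bounded by $\max(-\log\delta, |\log C_2|)$.

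Let $M$ be the maximum of the two constants produced in the two cases. By the $f$-invariance $f(V_f) \subset V_f$ from Theorem~\ref{arch}(2), the iterates $f^j(X)$ stay in $V_f$, so the bound applies uniformly at each step. A telescoping sum then gives
\[
\left| \frac{1}{d_f^{m}} \log \|f^{m}(X)\|_\infty - \frac{1}{d_f^{m+l}} \log \|f^{m+l}(X)\|_\infty \right|
\;\leq\; \sum_{j=m}^{m+l} \frac{1}{d_f^j} \left| \log \frac{\|f^{j+1}(X)\|_\infty}{\|f^j(X)\|_\infty^{d_f}} \right|
\;\leq\; M \sum_{j=m}^{m+l} \frac{1}{d_f^j},
\]
whose tail is independent of $X \in V_f$ and goes to $0$ as $m \to \infty$ since $d_f \geq 2$. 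Hence $d_f^{-m}\log\|f^m(X)\|_\infty$ is uniformly Cauchy on $V_f$, so $G_f$ is the uniform limit there; the identical argument on $V_g$ with $g$ in place of $f$ handles $G_g$.

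The main (minor) obstacle is bookkeeping: we must verify that the archimedean analogs of the inequalities used in the proof of Proposition~\ref{uniform} do not acquire a harmful multiplicative constant from the triangle inequality that would spoil the dichotomy defining $V_f$. This is exactly why Theorem~\ref{arch} was stated with a possibly larger constant $C_\infty$ in place of $1$; once one uses that theorem (and implicitly chooses $\epsilon,\delta$ small enough that conditions~(\ref{epde}) still hold with the archimedean $C_0$), the rest of the argument is formally the same.
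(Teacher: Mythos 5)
Your proposal is exactly the paper's argument: the paper's proof of this proposition is the single remark that, by Theorem~\ref{arch}, one can mimic the proof of Proposition~\ref{uniform}, and you have simply carried out that mimicry in detail (case split from the definition of $V_f$, invariance $f(V_f)\subset V_f$, the two-sided estimate from Lemma~\ref{Le1}, and the telescoping sum). No substantive difference from the paper's route.
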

    \begin{proof}
        By Theorem~\ref{arch}, we can mimic the proof of Proposition~\ref{uniform}.
    \end{proof}

\section{Equidistribution for strongly regular pairs}\label{equidistribution}

    In this section, we prove the equidistribution of small points for a strongly regular pair $\{f,g\}$ such that $\deg f = \deg g$. The main idea is to build a sequence of morphisms made by product of iterations of $f$ and $g$. Silverman \cite{S4} suggested the way of defining a morphisms from a jointly regular family of polynomial maps. In particular, if we have a jointly regular pair $\{f,g\}$ with $\deg f = \deg g$, we can easily make a morphism.
    \begin{lem}\label{joining}
        Let $S = \{ f, g :\af^n \rightarrow \af^n\}$ be a jointly regular pair of polynomial maps. Suppose $\deg f = \deg g$. Then, a polynomial map
        \[
        \phi = \phi_S = (f, g) : \af^n \rightarrow \af^n \times \af^n, \quad X \mapsto \bigl( f(X), g(X)\bigr)
        \]
        extends to a morphisms $\widetilde{\phi}_S : \pp^n \rightarrow \pp^{2n}.$
    \end{lem}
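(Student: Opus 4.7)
The plan is to write down an explicit homogeneous formula for $\widetilde{\phi}_S$ and observe that its indeterminacy locus is exactly the common indeterminacy locus $I(f)\cap I(g)$, which is empty by the joint regularity hypothesis.

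First I would write $f$ and $g$ in projective form as in Section~3: setting $d := \deg f = \deg g$, the meromorphic extensions to $\pp^n$ are
\[
f = [X_0^d : F_1 : \cdots : F_n], \qquad g = [X_0^d : G_1 : \cdots : G_n],
\]
where each $F_i, G_j$ is homogeneous of degree $d$ in $X_0, \ldots, X_n$. The hypothesis $\deg f = \deg g$ is used precisely here: it ensures that the $2n+1$ forms $X_0^d, F_1, \ldots, F_n, G_1, \ldots, G_n$ all have a common degree, so they can be concatenated into a single map to a single projective space $\pp^{2n}$ (rather than into a weighted projective space).

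Next I would define $\widetilde{\phi}_S: \pp^n \dashrightarrow \pp^{2n}$ by
\[
\widetilde{\phi}_S([X_0 : \cdots : X_n]) := [X_0^d : F_1 : \cdots : F_n : G_1 : \cdots : G_n].
\]
Restricting to the chart $\{X_0 = 1\} = \af^n$ recovers $(f(X), g(X)) = \phi_S(X)$ under the natural embedding $\af^n \times \af^n \hookrightarrow \pp^{2n}$, so this is indeed an extension of $\phi_S$.

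Finally I would verify that $\widetilde{\phi}_S$ has no indeterminacy. Its base locus is the common vanishing locus of all $2n+1$ coordinate forms in $\pp^n$. Since $X_0^d$ and $X_0$ cut out the same subscheme set-theoretically, this locus equals
\[
\{X_0 = F_1 = \cdots = F_n = 0\} \cap \{X_0 = G_1 = \cdots = G_n = 0\} = I(f) \cap I(g),
\]
which is empty because $\{f,g\}$ is jointly regular. Hence $\widetilde{\phi}_S$ is defined on all of $\pp^n$, i.e.\ a morphism. There is no real obstacle here; the entire content of the lemma is the translation of joint regularity into the absence of common zeros of the combined coordinate system, made possible by the equality of degrees.
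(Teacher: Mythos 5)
Your proposal is correct and follows essentially the same route as the paper: write $f,g$ in homogeneous form $[X_0^d:F_1:\cdots:F_n]$ and $[X_0^d:G_1:\cdots:G_n]$, concatenate into a single map to $\pp^{2n}$, and note that the common zero locus of $X_0, F_1,\ldots,F_n,G_1,\ldots,G_n$ is $I(f)\cap I(g)=\emptyset$ by joint regularity. Your explicit identification of the base locus with $I(f)\cap I(g)$ is just a slightly more careful phrasing of the paper's observation that the forms have no nontrivial common zero.
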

    \begin{proof}
        Let $X=(X_0, \cdots, X_n)$. $f$ and $g$ are polynomial maps so that we can write
        \[
        f = [X_0^{d}, F_1(X), \cdots, F_n(X)], \quad g = [X_0^{d}, G_1(X), \cdots, G_n(X)]
        \]
        where $F_i, G_i$ are homogeneous polynomials of degree $d = \deg f = \deg g$.

        Define a polynomial map
        \[
        \phi = (F_1, \cdots, F_n, G_1, \cdots G_n) : \af^n \rightarrow \af^{2n}.
        \]
        Then, since $f,g$ do not share the same indeterminacy points, there is no common zero of $\{F_i, G_i\}_{i=1, \cdots, n}$ except trivial one. Therefore, $\phi$ will extends to a morphism:
        \[
        \widetilde{\phi}(X) := [X_0^d, F_1(X), \cdots, F_n(X), G_1(X), \cdots, G_n(X)].
        \]
    \end{proof}

     The following Lemmas show that Lemma~\ref{joining} also works for iterations of $f$ and $g$: we can define $\phi_{\{f^m, g^m\}}$, too.

    \begin{lem}\label{fmgm}
     Let $\zeta_1, \zeta_2:\af^n \rightarrow \af^n$ be polynomial maps and let $H = \pp^n \setminus \af^n$ be the infinity hyperplane. Then,
    \[
    \deg (\zeta_1\circ \zeta_2) \leq \deg \zeta_1\cdot \deg \zeta_2 \quad \text{if and only if} \quad
    \zeta_1 (H \setminus I(\zeta_1)) \subset I(\zeta_2).
    \]
    \end{lem}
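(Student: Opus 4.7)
The plan is to write the composition in homogeneous coordinates and identify exactly when the formal product degree $d_1 d_2 := \deg\zeta_1 \cdot \deg\zeta_2$ drops; this is the whole content of the assertion, since ``$\leq$'' is automatic for rational self-maps of $\pp^n$. Write each polynomial map in projective form
\[
\zeta_i = [X_0^{d_i},\, F_1^{(i)},\, \ldots,\, F_n^{(i)}]
\]
with $F_j^{(i)}$ homogeneous of degree $d_i = \deg\zeta_i$, so that $I(\zeta_i) = \{X_0 = F_1^{(i)} = \cdots = F_n^{(i)} = 0\} \subset H$. First I would form the composition by substituting the inner map into the outer one, following the convention $f\circ g = [X_0^{d_fd_g}, F_i(g(X)), \ldots]$ fixed in Section~3.

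This produces $n+1$ homogeneous polynomials of degree $d_1 d_2$, one of which is the pure power $X_0^{d_1 d_2}$. The projective degree of $\zeta_1 \circ \zeta_2$ is $d_1 d_2$ minus the degree of the greatest common factor of these polynomials. Because one entry is a pure power of $X_0$, the GCD must itself be a power of $X_0$. Hence the strict inequality $\deg(\zeta_1 \circ \zeta_2) < d_1 d_2$ is equivalent to $X_0$ dividing every one of the remaining $n$ substituted numerators $F_j^{(\text{outer})}\bigl(X_0^{d_{\text{inner}}}, F_1^{(\text{inner})}, \ldots, F_n^{(\text{inner})}\bigr)$.

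Finally I would translate this algebraic divisibility into the geometric containment in the statement. Setting $X_0 = 0$ restricts to $H$, and a point $P \in H$ outside the indeterminacy of the inner map is sent \emph{into} $H$, since its $0$th coordinate specializes to $X_0^{d}|_P = 0$. So $X_0$ dividing each substituted entry is equivalent to every $F_j$ of the outer map vanishing on the Zariski-dense open set (inner map)$^{-1}$-image in $H$, which is precisely the ``image lies in the indeterminacy'' condition of the lemma. The converse direction uses that $(X_0)$ is a prime ideal, so a polynomial vanishing on a nonempty Zariski-open subset of the irreducible hypersurface $H$ is automatically divisible by $X_0$. The only step requiring real care is the bookkeeping that matches the composition order with the direction of the inclusion on $H$; once this is set up correctly, both implications follow immediately from the GCD analysis above, and no deeper geometric input is needed.
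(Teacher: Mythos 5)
Your argument is correct, and it is essentially the standard one: the paper itself gives no proof of this lemma, only the citation \cite[Lemma~7.8]{S2}, so your GCD analysis supplies precisely the argument that is being outsourced there (write the maps in homogeneous coordinates, observe that one entry of the composite is $X_0^{d_1d_2}$ so the common factor is a power of $X_0$, and translate divisibility by $X_0$ into vanishing of the outer forms on the image of $H$ under the inner map, using that $H\setminus I(\text{inner})$ is dense in the irreducible hypersurface $H$ and that $(X_0)$ is prime). Your reading of ``$\leq$'' as the strict inequality ``$<$'' is also the right one, since ``$\leq$'' always holds.

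One point you flagged as ``bookkeeping'' deserves to be settled explicitly, because it is where the printed statement itself is off. With the usual convention $\zeta_1\circ\zeta_2=\zeta_1(\zeta_2(\cdot))$, your divisibility criterion concerns the forms $F^{(1)}_j\bigl(X_0^{d_2},F^{(2)}_1,\dots,F^{(2)}_n\bigr)$ and therefore yields the condition on the \emph{inner} map: $\deg(\zeta_1\circ\zeta_2)<d_1d_2$ if and only if $\zeta_2\bigl(H\setminus I(\zeta_2)\bigr)\subset I(\zeta_1)$, which is the form of the statement in the cited reference. The inclusion as printed in the lemma, $\zeta_1\bigl(H\setminus I(\zeta_1)\bigr)\subset I(\zeta_2)$, has the two maps interchanged, and for noncommuting maps the two conditions are genuinely different: for $\zeta_1(x,y)=(x^2,xy)$ and $\zeta_2(x,y)=(y,y^2+x)$ on $\af^2$ one has $\deg(\zeta_1\circ\zeta_2)=3<4$, while $\zeta_1\bigl(H\setminus I(\zeta_1)\bigr)$ contains $[0:1:1]\notin I(\zeta_2)=\{[0:1:0]\}$. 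So your proof establishes the corrected (intended) statement rather than the literal one; this is a defect of the statement, not of your argument, and it is harmless in the paper's only application (the proof of Lemma~\ref{joint}), where $f\circ g=g\circ f$ makes the two inclusions interchangeable. A complete write-up should simply state which version is being proved.
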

    \begin{proof}
    See \cite[Lemma~7.8]{S2}.
    \end{proof}

    \begin{lem}\label{joint}
    Let $S = \{ f, g :\af^n \rightarrow \af^n\}$ be a strongly regular pair of polynomial maps defined over a number field $K$ and let $I(f), I(g)$ be the indeterminacy locus of $f$ and $g$ respectively. Then, $I(f^m) \subset I(f)$ and $I(g^m) \subset I(g)$.
    \end{lem}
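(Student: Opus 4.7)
The plan is to show that $f$ restricts to a self-map of $H \setminus I(f)$, where $H = \pp^n \setminus \af^n$ is the hyperplane at infinity. Iterating this self-map then forces $I(f^m) \subset I(f)$ for every $m \geq 1$, and the proof of $I(g^m) \subset I(g)$ is entirely symmetric.

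First, I would invoke Lemma~\ref{fmgm}. Because $\{f,g\}$ is strongly regular, $\deg(f \circ g) < \min(\deg f, \deg g) \leq \deg f \cdot \deg g$ (the last inequality using $\deg f, \deg g \geq 2$). By commutativity, $\deg(g \circ f) = \deg(f \circ g) < \deg g \cdot \deg f$. Reading Lemma~\ref{fmgm} with $g$ as the outer map and $f$ as the inner map, this strict drop in degree produces the image containment $f(H \setminus I(f)) \subset I(g)$.

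Next, I would use the joint regularity to iterate. Since $I(f) \cap I(g) = \emptyset$ and $I(g) \subset H$, we have $I(g) \subset H \setminus I(f)$, so the previous step refines to $f(H \setminus I(f)) \subset H \setminus I(f)$. Therefore the iterated composition $f^m$ is defined, without encountering any indeterminacy, on all of $H \setminus I(f)$; equivalently, $I(f^m) \cap (H \setminus I(f)) = \emptyset$. On the other hand, $f^m : \af^n \to \af^n$ is a polynomial map, so its meromorphic extension to $\pp^n$ satisfies $I(f^m) \subset H$. Combining these two containments gives $I(f^m) \subset I(f)$, and the statement for $g$ follows by swapping the roles of $f$ and $g$.

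The only delicate bookkeeping is the application of Lemma~\ref{fmgm}: one must use commutativity to transfer the degree drop in $f \circ g$ into a statement about the image of $f$, rather than the image of $g$, since it is $f$ (not $g$) whose iterates we want to control. Once this is done, the proof is driven entirely by the disjointness of $I(f)$ and $I(g)$, which keeps the forward $f$-orbit of any point of $H \setminus I(f)$ inside $H \setminus I(f)$ and permits indefinite iteration.
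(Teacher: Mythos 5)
Your proposal is correct and follows essentially the same route as the paper: apply Lemma~\ref{fmgm} (via the strict degree drop $\deg(f\circ g) < \min(\deg f,\deg g) < \deg f\cdot\deg g$) to get $f(H\setminus I(f))\subset I(g)$, then use $I(f)\cap I(g)=\emptyset$ to conclude $f(H\setminus I(f))\subset H\setminus I(f)$ and iterate. Your extra care about which map is ``outer'' and which is ``inner'' when invoking Lemma~\ref{fmgm}, using commutativity, is a welcome clarification of a point the paper glosses over, but it does not change the argument.
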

    \begin{proof}
    Since $S$ is strongly regular, we have
    \[
    \deg (f \circ g ) < \min (\deg f, \deg g) , \quad \deg f, \deg g \geq 2 \quad \text{and} \quad I(f) \cap I(g) = \emptyset.
    \]
    Thus, we get
    \[
    \deg (f \circ g ) < \min (\deg f, \deg g) < \deg f \cdot \deg g
    \]
    and hence
    \[
    f (H \setminus I(f)) \subset I(g)
    \]
    by Lemma~\ref{fmgm}. Moreover,
    \[
    f (H \setminus I(f)) \in H \setminus I(f)
    \]
    because $I(g) \subset H$ and $I(f)\cap I(g) = \emptyset$.

    Thus, by induction, $f^m(X) = f \bigl( f^{m-1}(X) \bigr)$ is well defined and $f^m(X) \not \in I(f)$ if $X \in H \setminus I(f)$. Hence, we get
    \[
    H \setminus I(f) \subset H \setminus I(f^{m}).
    \]
    \end{proof}
    \begin{cor}\label{again}
    Let $S = \{ f, g :\af^n \rightarrow \af^n\}$ be a strongly regular pair of polynomial maps defined over a number field $K$. Then, $\{f^m,g^m\}$ is also strongly regular for all $m\geq 2$.
    \end{cor}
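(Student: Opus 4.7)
My plan is to verify each of the six conditions of Definition~\ref{SR} for the pair $\{f^m, g^m\}$, deducing each from the corresponding property of $\{f, g\}$ together with Lemma~\ref{joint}.

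Four of the six conditions go through with little work. The disjointness of indeterminacy loci follows from $I(f^m) \cap I(g^m) \subset I(f) \cap I(g) = \emptyset$, where the inclusion is Lemma~\ref{joint} and the final equality is strong regularity of $\{f, g\}$. The degree lower bound $\deg f^m, \deg g^m \geq 2$ is immediate from $\deg f, \deg g \geq 2$ and algebraic stability. Commutativity $f^m \circ g^m = g^m \circ f^m$ follows from $f \circ g = g \circ f$ by a straightforward induction. Algebraic stability of $f^m$ (and of $g^m$) follows from $(f^m)^k = f^{mk}$ and algebraic stability of $f$: $\deg (f^m)^k = \deg f^{mk} = (\deg f)^{mk} = (\deg f^m)^k$.

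The main substantive step is the degree-drop condition $\deg(f^m \circ g^m) < \min(\deg f^m, \deg g^m)$. The key observation is that, because $f$ and $g$ commute as polynomial maps on $\af^n$, we have $f^m \circ g^m = (f \circ g)^m$ as polynomial maps, hence as rational maps on $\pp^n$. Setting $d_J := \deg(f \circ g)$ and using the standard subadditivity $\deg(\phi \circ \psi) \leq \deg\phi \cdot \deg\psi$ for rational maps iteratively,
\[
\deg(f^m \circ g^m) = \deg\bigl((f \circ g)^m\bigr) \leq d_J^m.
\]
Since $d_J < \min(\deg f, \deg g)$ by strong regularity of $\{f, g\}$, raising to the $m$-th power yields $d_J^m < (\min(\deg f, \deg g))^m = \min(\deg f^m, \deg g^m)$, the last equality using algebraic stability on the right. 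This is the required bound.

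The clause I expect to require the most care is the finiteness of the $D$-ratios $r(f^m)$ and $r(g^m)$. This does not follow from any obvious compositional identity for $r(\cdot)$, and I would address it by appealing to a result from \cite{Le2} on the behavior of the $D$-ratio under iteration of an algebraically stable polynomial map whose indeterminacy lies in the hyperplane at infinity. Alternatively, working with the equivalent Definition~\ref{SRO} phrased in terms of $c(f)$, the analogous finiteness $c(f^m) \leq c(f)^m < \infty$ is immediate from iterating the defining limsup, so a strongly regular pair in the sense of Definition~\ref{SRO} is readily seen to yield a strongly regular pair $\{f^m, g^m\}$ in the same sense.
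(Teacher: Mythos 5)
Your proposal is correct and follows essentially the same route as the paper: joint regularity of $\{f^m,g^m\}$ via Lemma~\ref{joint}, the degree-drop condition from $\deg(f^m\circ g^m)=\deg\bigl((f\circ g)^m\bigr)\leq \deg(f\circ g)^m<\min(\deg f,\deg g)^m$, and the remaining conditions by direct verification. The only step you leave to a citation, finiteness of $r(f^m)$, is handled in the paper by exactly the kind of result you anticipate, namely the submultiplicativity $\dfrac{r(f\circ g)}{\deg(f\circ g)}\leq \dfrac{r(f)}{\deg f}\cdot\dfrac{r(g)}{\deg g}$, which together with algebraic stability gives $r(f^m)\leq r(f)^m<\infty$.
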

    \begin{proof}
    The only difficult part of the proof is to show whether $\{f^m, g^m\}$ is still jointly regular. By Lemma~\ref{joint}, we have $I(f^m) \subset I(f), I(g^m) \subset I(g)$ and hence
    \[
    I(f^m) \cap I(g^m) \subset I(f)\cap I(g) = \emptyset.
    \]
    Rest of condition to be strongly regular are easy to check:
    \begin{itemize}
    \item $f^m, g^m$ are still algebraically stable and commutative. Also, $\deg f^m = (\deg f)^m \geq 2$,
    \item We have
    \[
    \dfrac{r(f\circ g)}{\deg (f \circ g)} \leq \dfrac{r(f)}{\deg f}\dfrac{r(g)}{\deg g}.
    \]
    Moreover, $f,g$ are algebraically stable. So we get
    \[
    r(f^m) \leq \deg f^m  \cdot \left( \frac{r(f)}{\deg f} \right)^m = r(f)^m \leq \infty.
    \]
    \item The degree condition is easily gained because because $f\circ g = g \circ f$:
    \begin{eqnarray*}
    \deg \left( f^m \circ g^m \right) &=& \deg (f \circ g)^m \\
    &\leq& \left( \deg f \circ g \right)^m \\
    &<&  \min \left( \deg f , \deg g \right)^m \\
    &=&  \min \left( \deg f^m,\deg g^m \right) .
    \end{eqnarray*}
    \end{itemize}
    \end{proof}

    Because of Lemma~\ref{joining} and Corollary~\ref{again}, we can define a sequence of morphisms:
    \begin{thm}\label{main}
        Let $S = \{ f, g :\af^n \rightarrow \af^n\}$ be a strongly regular pair of polynomial maps defined over a number field $K$ such that $\deg f = \deg g$. Define a polynomial map
        \[
        \phi_m : \af^n \rightarrow \af^{2n} \quad X \mapsto (f^m(X), g^m(X)) \quad \text{for all}~m\in \mathbb{N}.
        \]
        Consider $\phi_m$ as a morphisms from $\pp^n$ to $\pp^{2n}$. Then, the sequence $\{\phi_m\}$ generates the dynamical adelic metric for $S$ on $\ox_{\pp^n}(1)$.
    \end{thm}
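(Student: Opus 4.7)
The plan is to show that the pullback sequence $\bigl(\phi_m^{*}||\cdot||_{\mathrm{std}}\bigr)^{1/d^m}$, where $||\cdot||_{\mathrm{std}}$ is the standard sup-norm metric on $\ox_{\pp^{2n}}(1)$, defines an adelic semipositive metric on $\ox_{\pp^n}(1)$ whose local potentials are the pairwise Green functions $\max(G_{f,v},G_{g,v})$. Three ingredients need verification: $\phi_m$ is a genuine morphism of the expected degree; the resulting sequence of adelic metrics is stationary outside a finite bad set; and at each remaining place the sequence converges uniformly.

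First I justify that $\phi_m:\pp^n\to\pp^{2n}$ is a morphism. By Corollary~\ref{again} the iterated pair $\{f^m,g^m\}$ is again strongly regular, so $I(f^m)\cap I(g^m)=\emptyset$ by Lemma~\ref{joint}, and since $f,g$ are algebraically stable with $\deg f=\deg g=d$ one has $\deg f^m=\deg g^m=d^m$. Lemma~\ref{joining} then produces the morphism $\phi_m:\pp^n\to\pp^{2n}$, and $\phi_m^{*}\ox_{\pp^{2n}}(1)=\ox_{\pp^n}(d^m)$. Pulling back the standard metric and taking $d^m$-th roots defines a continuous semipositive model metric $||\cdot||_{S,v,m}$ on $\ox_{\pp^n}(1)$, whose local potential on the affine chart $X_0=1$ relative to the tautological section $s_0=X_0$ is
\[
G_{S,v,m}(X)=\dfrac{1}{d^m}\log^{+}\max\bigl(||f^m(X)||_v,\,||g^m(X)||_v\bigr).
\]

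Next I verify the adelic conditions. For $v$ outside the finite set $\mathcal{B}\subset M_K$ of bad-reduction places from Section~4, good reduction passes to iterates because the Hilbert--Nullstellensatz data $(\ref{HN})$ persists under composition when the coefficients remain $v$-adic units; hence $\{f^m,g^m\}$ has good reduction at $v$, the reduction $\widetilde{\phi_m}$ is a morphism on $\pp^n_{\mathbf{k}_v}$, and $||\cdot||_{S,v,m}$ equals the standard metric for every $m$. Equivalently, Proposition~\ref{good} gives $G_{S,v,m}=\log^{+}||\cdot||_v$ at every such place. For $v\in\mathcal{B}$ (including archimedean places), Propositions~\ref{uniform} and~\ref{uniform2} yield uniform convergence of $\tfrac{1}{d^m}\log^{+}||f^m||_v\to G_{f,v}$ on $V_f$ and of $\tfrac{1}{d^m}\log^{+}||g^m||_v\to G_{g,v}$ on $V_g$; combined with $V_f\cup V_g=\af^n$ and the elementary inequality $|\max(a_m,b_m)-\max(a,b)|\leq\max(|a_m-a|,|b_m-b|)$, I conclude that $G_{S,v,m}$ converges uniformly to $G_{S,v}:=\max(G_{f,v},G_{g,v})$.

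The main obstacle is the uniform convergence step on the asymmetric regions $V_f\setminus V_g$ and $V_g\setminus V_f$, where only one of the two component sequences is controlled uniformly. I resolve this using the universal upper bound $\tfrac{1}{d^m}\log^{+}||f^m||_v\leq\log^{+}||\cdot||_v+O(d^{-m})$ (and symmetrically for $g$) together with the lower bound $G_{f,v}\geq\log^{+}||\cdot||_v+C_f$ on $V_f$ from Theorem~\ref{two}. On $V_f\setminus V_g$ the $g$-sequence is squeezed against a uniformly controlled upper bound, while the $f$-sequence converges uniformly to a function dominating that bound up to constants, so for large $m$ the maximum is realized by the $f$-sequence and inherits its uniform convergence. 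A symmetric argument handles $V_g\setminus V_f$, completing the construction of the dynamical adelic metric associated to $S$.
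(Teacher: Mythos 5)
Your construction of the metrics, the stationarity argument at places of good reduction, and the reduction to uniform convergence of $\tau_{m,v}=\max\bigl(\tfrac{1}{d^m}\log^+||f^m||_v,\tfrac{1}{d^m}\log^+||g^m||_v\bigr)-\log^+||\cdot||_v$ all match the paper. The gap is exactly where you locate the ``main obstacle'': your squeeze on $V_f\setminus V_g$ does not work as stated. The universal upper bound for the $g$-sequence is not $\log^+||X||_v+O(d^{-m})$; telescoping $\log^+||g(X)||_v\le d\log^+||X||_v+C$ gives $\tfrac{1}{d^m}\log^+||g^m(X)||_v\le\log^+||X||_v+\tfrac{C}{d-1}$, a bound that is uniform in $m$ but only up to a fixed positive constant. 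On the other side, Theorem~\ref{two} gives $G_{f,v}\ge\log^+||X||_v+C_f$ on $V_f$ with $C_f=\log\min(\delta,\epsilon^{d_f})<0$. Since both bounds differ from $\log^+||X||_v$ only by constants of the wrong relative sign, the $f$-term need not dominate the $g$-term on $V_f\setminus V_g$, neither eventually nor uniformly, so you cannot conclude that the maximum is ``realized by the $f$-sequence'' there, and uniform convergence of the max on the asymmetric regions is not established. The paper circumvents precisely this difficulty by a different device: it forms $\Phi=(f,g)$ and $\Psi=(g,f)$ on $\af^{2n}$, checks that $\{\Phi,\Psi\}$ is again a strongly regular pair, and observes that the diagonal $\iota(X)=(X,X)$ satisfies $||\Phi(\iota(X))||_v=||\Psi(\iota(X))||_v$, hence lies in $V_\Phi\cap V_\Psi$; since $||\Phi^m(\iota(X))||_v=\max(||f^m(X)||_v,||g^m(X)||_v)$ is exactly the quantity in $\tau_{m,v}$, Proposition~\ref{uniform} applied to $\Phi$ yields the uniform convergence on all of $\af^n$ in one stroke. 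If you want to keep your route, you would need to supply an argument of comparable strength on $V_f\setminus V_g$ (e.g.\ re-proving the Cauchy estimate of Proposition~\ref{uniform} directly for the max), which is essentially what the doubling trick accomplishes.

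A secondary omission: the metric lives on all of $\pp^n$, so you also need convergence of $\tau_{m,v}$ at points of the hyperplane at infinity. The paper handles this through its pointwise-convergence claim on $H$ (treating $X\in I(f)$, $X\in I(g)$, and $X\notin I(f)\cup I(g)$ separately, using $f(H\setminus I(f))\subset I(g)$, $g(H\setminus I(g))\subset I(f)$ and height comparisons on $I(f)$, $I(g)$), and then passes from uniform convergence on the dense set $\af^n$ to $\pp^n$. Your proposal is silent on the behavior at infinity, so even with the affine convergence repaired it would not yet yield the adelic metric on $\ox_{\pp^n}(1)$.
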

    \begin{proof}

        Let $L= \ox_{\pp^{n}}(1)$ and $L' = \ox_{\pp^{2n}}(1)$ be ample line bundles on $\pp^n$ and $\pp^{2n}$ respectively,
        let $||\cdot||_v$ be the $v$-adic supreme norm on $\af^N_K$ and
        let $\left\{ ||\cdot||_{L, v} \right\}$, $\left\{ ||\cdot||_{L', v} \right\}$ be the standard adelic metrics on $L$ and $L'$ respectively.

        Let's construct a sequence of adelic metrics: because of Lemma~\ref{joining},
        \[
        \phi_m : \pp^n \rightarrow \pp^{2n}
        \]
        are morphisms of degree $d^m = \deg f^m = \deg g^m$. Since $\phi_m^* L'$ is isomorphic to $L^{\otimes d^m}$, we can choose an isomorphism
        \[
        \sigma_m : L^{\otimes d^m} \rightarrow \phi_m^* L'
        \]
        and get a sequence of adelic metrics on $L$
        \[
        ||\cdot||_{L, m,v} =  \sigma_m^* \phi_m^* ||\cdot||_{L',v}^\frac{1}{d^m}.
        \]
        More precisely, consider $\phi_m$ as a polynomial map between line bundles on $\pp^n$ and $\pp^{2n}$,
        \[
        \phi_m : L \approx \af^{n+1} \rightarrow L' \approx \af^{2n+1}
        \]
        induced by the original morphism $\phi_m : \pp^n \rightarrow \pp^{2n}$. Also, let $X = (X_1, \cdots, X_n) = [1,X_0, \cdots, X_n] \in \af^n_{\overline{K}}$ and let $Y = (1,X_0,\cdots X_n) \in \af^{n+1}_{\overline{K}}$.

        Choose a section $s : \pp^n \rightarrow \af^{n+1}$ and get
        \[
        ||s||_{L, m,v}(X) = \sigma_m^* \phi_m^* ||s||_{L',v}^\frac{1}{d^m}(X) =  ||\phi_m^*s||_{L,v}^\frac{1}{d^m}(X)
        = ||\phi_m\bigl( s(X) \bigr)||_{v}^\frac{1}{d^m} .
        \]

        We say that an adelic metric $\{||\cdot||_{L, m}\}$ converges to $\{||\cdot||_{L}\}$ if there is an open set $U$ of $\Spec \ox_K$ such that
        $||\cdot||_{L,m,v}  = ||\cdot||_{L,v}$ for all $v \in U$ and
        $\dfrac{||\cdot||_{L,m,v} }{||\cdot||_{L,v} }$ uniformly converges to $1$ for all $ v \not \in U$.
        Let $U= \mathcal{B}^c$ where $\mathcal{B}$ is the finite set of primes found in Proposition~4.4. Then, $S$ has good reduction at all $v \in U$ and hence we get
        \[
        \dfrac{1}{d} \log^+||f(X)||_v = \dfrac{1}{d} \log^+||g(X)||_v = \log^+||X||_v
        = \log ||Y||_v \quad \text{for all}~X\in \af^n_{\overline{K}}
        \]
        by Proposition~\ref{good}. Moreover, since $||\cdot||_v$ is the supreme norm and $\phi_m(X) = (f^m(X), g^m(X))$, we get
        \[
        ||\phi_m(Y)||_v = ||\bigl( 1, f^m(X), g^m(X) \bigr)||_v = \max \left[ \max \left(  1, ||f^m(X)||_v\right),  \max \left(1, ||g^m(X)||_v \right) \right].
        \]
        and hence
        \[
        ||\phi_m(Y)||_v = ||Y||_v^{d^m}.
        \]

        Since $X = [1, X_1, \cdots, X_n]$, we get
        \[
        s(X) = \alpha Y = (\alpha(X), \alpha(X) X_1, \cdots, \alpha(X) X_n)
        \]
        for some $\alpha\in \overline{K}^*$. Moreover, because $||\phi_m(Y)||_v = ||Y||_v^{d^m}$, we get
        \[
        ||\phi_m\bigl( s(X) \bigr))||_v = ||\alpha||_v^{d^m} \cdot ||\phi_m(Y)||_v
        = ||\alpha||_v^{d^m} \cdot ||Y||_v^{d^m}= ||s(X)||_v^{d^m}.
        \]
        and hence
        \[
        \sigma_m^* \phi_m^*||s||_{L,v}^\frac{1}{d^m}(X) = ||\phi_m\bigl( s(X) \bigr)||_{v}^\frac{1}{d^m} =||s(X)||_{v} =||s||_{L',v}(X)
        \]
        for all $m\geq 1$.

        Now, check the uniform convergence of the given metric at $v \not \in U$. Consider again $\phi_m : \pp^n \rightarrow \pp^{2n}$ as a polynomial map
        \[
        \phi_m : \af^{n+1} \rightarrow \af^{2n+1}.
        \]
        Let
        \[
        \pi : \af^{n+1} \setminus \{0\} \rightarrow \pp^{n}.
        \]
        Clearly, $\phi_m$ sends the origin to the origin. Moreover, we have the following equality:
        \[
        \log||\phi_m(Y_1)||_v - d^m \log||Y_1||_v = \log||\phi_m(Y_2)||_v - d^m \log||Y_2||_v
        \]
        if $\pi(Y_1) = \pi(Y_2)$ because $Y_1 = \alpha \cdot Y_2$ for some $\alpha \in K^*$ and $\phi_m$ consists of homogeneous polynomials of degree $d^m$. Therefore,
        the value of $\log||\phi_m\bigl( s(X) \bigr)||_v - d^m \log||s(X)||_v$ only depends on $X$, independent of choice of sections $s : \pp^n \rightarrow {L}$. Thus, choose an arbitrary section $s:\pp^n \rightarrow {L}$ and define continuous functions
        \[
        \tau_{m,v}(X) = \left[  \log \sigma_m^*\phi_m^*|| s||^{\frac{1}{d^m}}_{L', m, v }  - \log ||s||_{L, v} \right](X).
        \]
        Now, we only have to show that $\tau_{m,v}$ is uniformly convergent on $\pp^n$ for all $v \not \in U$: if it is true, then we have the uniform limit of semipositive adelic metrics
        \[
        ||\cdot ||_{L, \infty, v} = \left( \lim_{m \rightarrow \infty} \exp(\tau_{m,v})  \right) \cdot ||\cdot||_{L,v},
        \]
        which is also semipositive.

        \begin{claim}\label{pointconv}
        $\{\tau_{k,v}\}$ is pointwisely convergent on $\pp^{n}_{\overline{K}}$.
        \end{claim}
        \begin{proof}[Proof of Claim~\ref{pointconv}]
        If $X \in \af^n_{\overline{K}}$, then, we easily get the limit:
        \[
        \lim_{m\to \infty} \tau_{m,v}(X) = \exp \left[ \max\left(G_{f,v}(X), G_{g,v}(X) \right) / \log^+||X||_v \right].
        \]

        Suppose that $X \in H = \pp^n_{\overline{K}} \setminus \af^n_{\overline{K}}$. Then, we have three cases:
        \[
        X \in I(f), \quad X \in I(g) \quad \text{or}\quad X \not \in I(f)\cup I(g).
        \]
        Recall Lemma~\ref{fmgm}: since $f,g$ are algebraically stable, we get
        \[
        f\bigl(H \setminus I(f) \bigr) \subset I(g) \quad \text{and} \quad g\bigl(H \setminus I(g) \bigr) \subset I(f).
        \]
        In particular, $f$ sends $I(g)$ to $I(g)$ so that we can consider $f$ as an endomorphism on $I(g)$.

        \noindent{\bf Case 1} : $X \in I(f)$ or $X \in I(g)$.

        If $X = [0,X_1, \cdots, X_n] \in I(f)$, then $F_i(X)=0$ for all $i=1, \cdots, n$. Moreover, since $I(f) \cap I(g) = \emptyset$ and $g\bigl( H \setminus I(g) \bigr) \subset I(f)$, we get $g^m(X) \in I(f)$ for all $m\geq 1$.

        Let $Y = (0, X_0, \cdots, X_n)$. Since $f(Y) = (0,\cdots, 0)$, we get
        \[
        \phi_m(Y) = (0, f^m(X), g^m(X)) = (0,0,\cdots, 0, g^m(X))
        \]
        so that
        \[
        \log ||\phi_m(Y)||_v = \log ||g^m(X)||_v.
        \]

        Lemma~\ref{joint} says that $\deg (f\circ g ) < \max (\deg f m \deg g)$ guarantees that
        \[
        g\bigl( I(f) \bigr) \subset g\bigl( H \setminus H \bigr) \subset I(f).
        \]
        Since $I(f)$ is a union of subvarieties, we have the following height equality \cite[Theorem~B.2.7]{SH}: there is a constant $C_1$ such that
        \begin{equation}\tag{I}\label{Ig}
        \left| h\bigl( g(X) \bigr) - \deg g \cdot h(X)\right| < C_1, \quad \left| \log ||g(X)||_v - \deg g \cdot \log ||X||_v \right| < C_1
        ~\text{for all}~X\in I(f).
        \end{equation}

        Therefore, $\{\tau_{m,v}\}$ is a Cauchy sequence: by telescoping sum, we get
        \begin{eqnarray*}
        &&\left| \left( \dfrac{1}{d^m} \log ||\phi_m(Y)||_v - \log||Y||_v \right) -\left( \dfrac{1}{d^M} \log ||\phi_M(Y)||_v - \log||Y||_v \right) \right| \\
        &&~\leq \sum_{j=m}^{M-1}
        \left| \left( \dfrac{1}{d^{j+1}} \log ||\phi_{j+1}(Y)||_v - \log||Y||_v \right) -\left( \dfrac{1}{d^j} \log ||\phi_j(Y)||_v - \log||Y||_v \right) \right| \\
        &&~\leq \sum_{j=m}^{M-1} \dfrac{1}{d^{j+1}} \left|  \log ||g\bigl(g^j(X)\bigr)||_v - \deg g \cdot \log ||g^j(X)||_v  \right| \\
        &&~\leq \sum_{j=m}^{M-1} \dfrac{1}{d^{j+1}} C_1.
        \end{eqnarray*}

        Similarly, we have the same result if $X \in I(g)$: there is a constant $C_2$ such that
        \begin{equation}\tag{J}\label{If}
        \left| h\bigl( f(X) \bigr) - \deg f \cdot h(X)\right| < C_1, \quad \left| \log ||f(X)||_v - \deg f \cdot \log ||X||_v \right| < C_2
        \end{equation}
        and
        \[
        \left| \left( \dfrac{1}{d^m} \log ||\phi_m(Y)||_v - \log||Y||_v \right) -\left( \dfrac{1}{d^M} \log ||\phi_M(Y)||_v - \log||Y||_v \right) \right|
        \leq \sum_{j=m}^{M-1} \dfrac{1}{d^{j+1}} C_2.
        \]

        \noindent{\bf Case 2} : $X \not \in I(f) \cup  I(g)$.

        Then, $f(X) \in I(g)$ and $g(X) \in I(f)$. Then, $f(X) \in I(g)$ and $g(X) \in I(f)$. So, both (\ref{Ig}) and (\ref{If}) hold. Moreover, we have
        \[
        \log ||\phi_m(Y)||_v = \log ||(0,f^m(X), g^m(X))||_v = \max \left( \log ||f^m(X)||_v, \log ||g^m(X)||_v\right).
        \]

        Therefore, we get
        \[
        \left| \left( \dfrac{1}{d^m} \log ||\phi_m(Y)||_v - \log||Y||_v \right) -\left( \dfrac{1}{d^M} \log ||\phi_M(Y)||_v - \log||Y||_v \right) \right|
        \leq \sum_{j=m}^{M-1} \dfrac{1}{d^{j+1}} \max( C_1, C_2).
        \]
        \end{proof}

        By Claim~\ref{pointconv}, $\tau_{m,v}$ is pointwisely convergent. Thus, if it is uniformly convergent on a dense subset of $\pp^n$, then it is uniformly convergent on $\pp^n$. (See \cite{Kr}.)  Thus, we only have to show that it is uniformly convergent on $\af^n$.

        Define new polynomial maps
        \[
        \Phi := (f, g) : \af^n \times \af^n \rightarrow \af^n \times \af^n
        \]
        \[
        \Psi := (g, f) : \af^n \times \af^n \rightarrow \af^n \times \af^n
        \]
        \[
        \iota: \af^n \rightarrow \af^{2n}, \quad X \mapsto (X,X)
        \]
        Then, $\{ \Phi, \Psi \}$ is a strongly regular by Definition~\ref{SRO}:
            \begin{itemize}
                \item  $I(\Phi) \cap I(\Psi) = \{[0, X_1, \cdots, X_n, X'_1, \cdot, X'_n] ~|~ [0, X_1, \cdots, X_n],[0, X'_1, \cdots, X'_n]\in I(f)\cap I(g) \} = \emptyset$
                \item  Let $Z = (X,X') \in \af^{2n}$. By Theorem~\ref{Le1}, we have
                \[
                 h\bigl( \Phi(Z) \bigr) \geq h(f(X)) + h(g(X')) \geq  \dfrac{d}{r(f)} h(X) + \dfrac{d}{r(f)} h(X') \geq \dfrac{d}{2\max(r(f),r(g))} h(Z)
                 \]
                 for all $Z \in \af^{2n}$. Thus,
                $c(\Phi), c(\Psi) \leq \frac{2}{d} \max \bigl( r(f), r(g) \bigr) < \infty$
                \item  $\deg \Phi = \deg \Psi = \deg f >2$
                \item  $\Phi \circ \Psi = (f \circ g , g \circ f) = (g \circ f , f \circ g) = \Psi \circ \Phi$
                \item  $\Phi, \Psi$ are algebraically stable.
                \item  $\deg \Phi = \deg f > \deg (f \circ g) = \deg (\Phi \circ \Psi)$
            \end{itemize}
        Remark that if $f,g$ are iterations of a regular polynomial automorphism and its inverse, then they are regular polynomial automorphisms again and hence they have finite $D$-ratios so that we can use Definition~\ref{SR}.

        Therefore, for any $v \in M_K$, there exist two positive integers $\epsilon, \delta$ such that
        \begin{equation}\label{union}\tag{K}
        V_{\Phi_,v,\epsilon,\delta} \cup V_{\Psi_,v,\epsilon,\delta} = \af^{2n}
        \end{equation}
        where
        \begin{eqnarray*}
        V_\Phi = V_{\Phi,v,\epsilon,\delta} &:=& \left\{Z=(X,X') \in \af^n \times \af^n ~|~  ||Z||_v < \dfrac{1}{\epsilon} ~\text{or}~
        \max\{ ||\Phi(Z)||_v,1\} \geq \delta \max \{||Z||_v^d, 1\} \right\}
        \end{eqnarray*}
        and
        \begin{eqnarray*}
        V_\Psi = V_{\Psi,v,\epsilon,\delta} &=& \left\{Z=(X,X') \in \af^n \times \af^n ~|~  ||Z|| > \dfrac{1}{\epsilon}  ~\text{or}~
        \max\{ ||\Psi(Z)||_v,1\} \geq \delta \max \{||Z||_v^d, 1\} \right\}.
        \end{eqnarray*}
        By definitions of $\Phi$ and $\iota$, we get
        \[
        ||\Phi(Z)||_v = \max \{||f(X)||_v, ||g(X')||_v\}\quad\text{where}~ ||Z||_v = \max\{ ||X||_v , ||X'||_v\}
        \]
        and
        \[
        ||\Phi\bigl( \iota (X) \bigr)||_v = \max\left( ||f(X)||_v, ||g(X)||_v\right) = ||\Psi\bigl( \iota(X) \bigr) ||_v.
        \]
        By (\ref{union}), $\iota(X) = (X,X) \in V_{\Phi_,v,\epsilon,\delta}$ or $(X,X) \in V_{\Psi_,v,\epsilon,\delta}$. Moreover, by definition of $\Phi$ and $\Psi$, we have
        \[
        ||\Phi\bigl(\iota(X) \bigr) = \max\bigl( ||f(X)||_v , ||g(X)||_v\bigr)= \max\bigl( ||g(X)||_v , ||f(X)||_v\bigr)=||\Psi\bigl(\iota(X) \bigr)||_v.
        \]
        Thus, if $\iota(X) \in V_{\Psi_,v,\epsilon,\delta}$, then $\iota (X) \in V_{\Phi_,v,\epsilon,\delta}$ too by definition of $V_\Phi$ and $V_\Psi$. Furthermore, $V_F \cup V_g = \af^n$ so that $\iota(X)$ should be contained in one of them. Hence,
        \[
        (X,X) \in V_{\Phi} \cap V_{\Psi}.
        \]
        So, $\dfrac{1}{d^m} \log||\Phi^m(\iota(X))|| - \log ||\iota(X)||$ is uniformly convergent on $\iota\bigl( \af^n \bigr)$. Furthermore, because
        \[
        \phi_m = \Phi^m \circ \iota,
        \]
        we have the following: a sequences of continuous functions
        \begin{eqnarray*}
        \tau_{m,v}(X) &=&  \left( \dfrac{1}{d^m} \log \sigma_m^* \phi_m^* || \cdot ||_{L,v} - \log ||\cdot||_{L,v}  \right)(X) \\
        &=& G_{\Phi,v} (X,X)  - \log ||\cdot||_{L',v} (X,X)
        \end{eqnarray*}
        is uniformly convergent to a continuous function $\tau_v$ on $\af^n$.

        Therefore, we have the semipositive metric
        \[
        ||\cdot||_{L,\infty, v} =
        \begin{cases}
        ||\cdot||_{L} & \text{if}~ v  \in U \\
        ||\cdot||_{L}  \cdot e^{\tau_v} & \text{otherwise}
        \end{cases}
        \]
        on $L = \ox_{\pp^n}(1)$.
    \end{proof}

    Now, the existence of the dynamical adelic metric allows the equidistribution.

    \begin{thmB}
        Let $S=\{f,g:\af^n \to \af^n\}$ be a strongly regular pair of polynomial maps defined over a number field $K$ such that $\deg f = \deg g$. Then, we have the equidistribution of small points:
        let $\{ x_m\} $ be a generic and small sequence on $\pp^n_{\overline{K}}$ with respect to the arithmetic canonical height function $\widetilde{h}_S$. Then, for any place $v \in M_K$, there is a probability measure $\mu_{S,v}$ on $\operatorname{Berk}\bigl( \pp^n_{\mathbb{C}_v} \bigr)$  such that a sequence of the probability measure on the Galois orbit of $x_m$ weakly converges to $\mu_{S,v}$.
    \end{thmB}
    \begin{proof}
        Theorem~\ref{main} says that we have that the semipositive dynamical adelic metric. Therefore, by Yuan's result (Theorem~\ref{ED}), we have the equidistribution
        of small points with respect to $\overline{\ox_{\pp^n}(1)} = (\ox_{\pp^n}(1), ||\cdot||_{\ox_{\pp^n}(1),\infty})$.
    \end{proof}

\section{Application to regular polynomial automorphisms}

    In the previous section, we have the equidistribution of small points for the dynamical system of a strongly regular pair of affine morphisms having the same degree. In this section, we will apply this result to regular polynomial automorphisms. As stated in introduction, $\{f,f^{-1}\}$ is an example of a strongly regular pair, but not of the same degree:

    \begin{ex}\label{l>1}
    Let $f$ be the following regular polynomial automorphism of degree $8$
    \[
    f(x_1,x_2, x_3, y_1, y_2, y_3, y_4) = (x_3,x_1+x_3^2, x_2+x_1^2, y_2 - y_1^2, y_3 - (y_2- y_1^2)^2, y_2 - (y_3 - (y_2- y_1^2)^2)^2, y_1).
    \]
    Then, its inverse is of degree $4$:
    \[
    f^{-1}(x,y,z) = (y-x^2, z- (y-x^2)^2, x, y_4, y_1+y_4^2, y_2 + y_1^2, y_3 + y_2^2).
    \]
    \end{ex}

    The following proposition says that we can get a strongly regular pair of the same degree by choosing appropriate iterates of $f$ and $f^{-1}$:

    \begin{prop}\label{degree}
        Let $f:\af^n \rightarrow \af^n$ be a regular affine automorphism with the inverse $f^{-1}$ and let $I(f), I(f^{-1})$ be indeterminacy loci of the meromorphic extensions of $f$ and $f^{-1}$ on $\pp^n_K$ respectively. Let
         $l_1 = \dim I(f)+ 1$ and $l_2 = \dim I(f^{-1}) +1$. Then,
        \[
        l_1 + l_2 = n \quad \text{and} \quad \deg f^{l_2} = \deg f^{-l_1}.
        \]
    \end{prop}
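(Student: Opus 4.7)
The plan is to deduce both assertions from the structure of $\bar f$ and $\bar{f^{-1}}$ at the hyperplane at infinity $H=\{X_0=0\}$, combined with intersection theory on a resolution of the graph of $\bar f$.

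First, write the meromorphic extensions as $\bar f=[X_0^{d_+}:F_1:\cdots:F_n]$ and $\bar{f^{-1}}=[X_0^{d_-}:G_1:\cdots:G_n]$ in homogeneous coordinates, with $d_\pm=\deg f^{\pm 1}$ and $F_i,G_i$ homogeneous of the corresponding degrees. Both indeterminacy loci lie in $H$, and $I(f)\cap I(f^{-1})=\emptyset$ by regularity. Clearing the common factor $X_0^{d_+d_--1}$ in $\bar f\circ\bar{f^{-1}}=\mathrm{id}$ yields the polynomial identities
\[
F_i\bigl(X_0^{d_-},G_1(X),\ldots,G_n(X)\bigr)=X_0^{d_+d_--1}X_i,\qquad i=1,\ldots,n.
\]
Setting $X_0=0$ gives $\bar F_i(\bar G_1,\ldots,\bar G_n)\equiv 0$, where $\bar F_i,\bar G_i$ denote the restrictions of $F_i,G_i$ to $H$. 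This is the algebraic translation of the geometric fact $\bar{f^{-1}}(H\setminus I(f^{-1}))\subseteq I(f)$, and symmetrically $\bar f(H\setminus I(f))\subseteq I(f^{-1})$.

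For the dimension equality $l_1+l_2=n$, the map $\bar f|_{H\setminus I(f)}\colon H\setminus I(f)\to I(f^{-1})$ is dominant (by the symmetric identity), so its generic fiber has dimension $(n-1)-(l_2-1)=n-l_2$. A structural analysis shows that this generic fiber is a linear subspace of $H=\pp^{n-1}$ of dimension $l_1$ passing through $I(f)$: each fiber arises as a linear $\pp^{l_1}$ through $I(f)$ that the leading forms $\bar F_i$ collapse to a point. Equating the two expressions for the fiber dimension gives $l_1=n-l_2$, i.e., $l_1+l_2=n$.

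For the degree equality, pass to a smooth resolution $\tilde\Gamma$ of the graph $\Gamma\subset\pp^n\times\pp^n$ of $\bar f$, with birational projections $\pi_1,\pi_2\colon\tilde\Gamma\to\pp^n$ whose exceptional loci sit over $I(f),I(f^{-1})$ respectively. On $\tilde\Gamma$ the divisor relations
\[
\pi_2^*H\sim d_+\pi_1^*H+E_1,\qquad \pi_1^*H\sim d_-\pi_2^*H+E_2
\]
hold, with $E_1,E_2$ exceptional divisors. Compute the top intersection $(\pi_1^*H)^{l_1}\cdot(\pi_2^*H)^{l_2}$ on $\tilde\Gamma$ (of total codegree $l_1+l_2=n$) in two ways: expanding via the first relation gives
\[
(\pi_1^*H)^{l_1}\cdot(\pi_2^*H)^{l_2}=\sum_{k=0}^{l_2}\binom{l_2}{k}d_+^{l_2-k}E_1^k\cdot(\pi_1^*H)^{n-k},
\]
and for each $k\geq 1$ the projection formula together with the fact that $\pi_{1*}(E_1^k)$ is supported on $I(f)$ of dimension $l_1-1<l_1\leq n-k$ forces the term to vanish. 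Only the $k=0$ term survives, yielding $d_+^{l_2}$. The symmetric computation using $\pi_1^*H\sim d_-\pi_2^*H+E_2$ produces $d_-^{l_1}$; hence $d_+^{l_2}=d_-^{l_1}$, which is $\deg f^{l_2}=\deg f^{-l_1}$ by the algebraic stability $\deg f^m=d_+^m$ and $\deg f^{-m}=d_-^m$.

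The main obstacle is the dimension identity $l_1+l_2=n$: specifically, pinning down that the generic fiber of $\bar f|_{H\setminus I(f)}\to I(f^{-1})$ has dimension exactly $l_1$, or equivalently that it is cut out by a pencil of linear subspaces through $I(f)$. This goes beyond the polynomial identity $\bar F_i(\bar G_1,\ldots,\bar G_n)\equiv 0$ derived above, and is most naturally extracted either from the $D$-ratio machinery developed in Section~2 or from a direct analysis of the iterated blow-up resolving $\bar f$; once this is in hand, the degree identity follows elegantly from the intersection-theoretic computation.
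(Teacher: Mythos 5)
The paper itself offers no argument for this proposition: it simply cites \cite{Si} and \cite{S2}, so any self-contained proof is necessarily a different route, and parts of your skeleton are sound. The identity $F_i\bigl(X_0^{d_-},G_1,\ldots,G_n\bigr)=X_0^{d_+d_--1}X_i$ and its restriction to $X_0=0$ do give $\overline{f^{-1}}\bigl(H\setminus I(f^{-1})\bigr)\subseteq I(f)$ and symmetrically $\overline{f}\bigl(H\setminus I(f)\bigr)\subseteq I(f^{-1})$, and the intersection computation of $(\pi_1^*H)^{l_1}\cdot(\pi_2^*H)^{l_2}$ on a resolution of the graph is correct in substance: the relation should read $\pi_2^*H\sim d_+\pi_1^*H-E_1$ with $E_1$ effective and supported over $I(f)$ (likewise $\pi_1^*H\sim d_-\pi_2^*H-E_2$ over $I(f^{-1})$), but the sign is immaterial to the vanishing of the mixed terms via the projection formula, so, \emph{granted} $l_1+l_2=n$, the two expansions do yield $d_+^{l_2}=d_-^{l_1}$.

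The genuine gap is the dimension identity $l_1+l_2=n$, which you assert rather than prove, and on which everything else depends: if $l_1+l_2\neq n$ the two expansions are not computations of one and the same top intersection number, so the degree identity does not follow either. Concretely, two claims are unsupported. First, dominance of $\overline{f}|_{H\setminus I(f)}\to I(f^{-1})$: the polynomial identity only gives that the image lies \emph{in} $I(f^{-1})$, not that it is dense there. Second, even granting dominance, the fiber-dimension count gives generic fiber dimension $n-l_2$, and identifying this with $l_1$ is exactly the statement to be proved; your claim that ``a structural analysis shows'' the generic fiber is a linear $\pp^{l_1}$ through $I(f)$ is pure assertion, does not follow from anything derived in the proposal, and is at least as strong as the proposition itself. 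You acknowledge this in your closing paragraph, so as written the proposal establishes only the standard inclusions plus a conditional intersection-theoretic computation; the core of Proposition~\ref{degree} (the relation among $\dim I(f)$, $\dim I(f^{-1})$ and $n$) still has to be imported from \cite{Si} or \cite{S2}, or proved by an argument you have not supplied.
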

    \begin{proof}
        See \cite[Proposition~2.3.2]{Si} or \cite[Theorem~7.10]{S2}.
    \end{proof}

    \begin{lem}\label{pick}
        Let $f:\af^n \rightarrow \af^n$ be a regular polynomial automorphism with the inverse $f^{-1}$ and let $l_1, l_2$ be positive integers defined in Proposition~\ref{degree}. Then, $\{f^{l_2}, f^{-l_1}\}$ is a strongly regular pair.
    \end{lem}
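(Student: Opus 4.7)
\emph{Plan.} I would verify, in order, each of the six conditions in Definition~\ref{SR} for the pair $\{\Phi,\Psi\} := \{f^{l_2},f^{-l_1}\}$; most reduce immediately to facts already established in the paper, and the crux is the degree-of-composition bound.

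First, the four ``bookkeeping'' conditions. \emph{No common indeterminacy}: since $f$ is a regular polynomial automorphism, $I(f)\cap I(f^{-1})=\emptyset$, and the inductive argument of Lemma~\ref{joint} (applied to the jointly regular, algebraically stable pair $\{f,f^{-1}\}$) yields $I(f^{l_2})\subset I(f)$ and $I(f^{-l_1})\subset I(f^{-1})$, so $I(\Phi)\cap I(\Psi)=\emptyset$. \emph{Finite $D$-ratio}: $f$ and $f^{-1}$ are regular, hence $r(f),r(f^{-1})<\infty$, and the multiplicative bound $r(f^m)\leq r(f)^m$ established in the proof of Corollary~\ref{again} gives $r(\Phi),r(\Psi)<\infty$. \emph{Commutativity}: immediate from $\Phi\circ\Psi=f^{l_2-l_1}=\Psi\circ\Phi$. \emph{Algebraic stability}: I would invoke the standard fact that regular polynomial automorphisms satisfy $\deg f^k=(\deg f)^k$ for every $k\in\mathbb{Z}$; this gives $\deg\Phi^m=(\deg f)^{l_2m}=(\deg\Phi)^m$ and the analogue for $\Psi$, and also forces $\deg\Phi,\deg\Psi\geq 2$, since $\deg f\geq 2$ and $l_1,l_2\geq 1$ (the degenerate case $l_i=0$ would make $f$ or $f^{-1}$ extend to a morphism on $\pp^n$, hence to be linear).

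The substantive step is the degree-of-composition condition $\deg(\Phi\circ\Psi)<\min(\deg\Phi,\deg\Psi)$. By Proposition~\ref{degree} we have the key equality $\deg f^{l_2}=\deg f^{-l_1}$, so $\deg\Phi=\deg\Psi$. Assuming without loss of generality that $l_2\geq l_1$, algebraic stability gives $\deg(\Phi\circ\Psi)=\deg f^{l_2-l_1}=(\deg f)^{l_2-l_1}$; since $l_1\geq 1$ this is strictly less than $(\deg f)^{l_2}=\deg\Phi$, and in the boundary case $l_2=l_1$ the composition is the identity, of degree $1$.

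\emph{Main obstacle.} Nothing genuinely hard arises: the whole verification is bookkeeping on top of two external inputs, namely Proposition~\ref{degree} (which equalizes the degrees of $f^{l_2}$ and $f^{-l_1}$) and the algebraic stability $\deg f^k=(\deg f)^k$ of regular polynomial automorphisms for all $k\in\mathbb{Z}$. Once these are granted, each clause of Definition~\ref{SR} follows in one line.
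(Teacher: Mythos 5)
Your proposal is correct and follows essentially the same route as the paper's proof: reduce everything to Lemma~\ref{joint} for the disjointness of $I(f^{l_2})$ and $I(f^{-l_1})$, finiteness of the $D$-ratio and algebraic stability of regular automorphisms for the bookkeeping conditions, and Proposition~\ref{degree} together with $f^{l_2}\circ f^{-l_1}=f^{l_2-l_1}$ for the degree-of-composition bound. Your write-up is in fact a more careful version of the paper's terse argument (explicitly splitting the cases $l_2\geq l_1$ and $l_2=l_1$, and using $r(f^m)\leq r(f)^m$ in place of the paper's exact formula for $r(f^m)$), but these are minor variations, not a different approach.
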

    \begin{proof}
        It is clear that $f,f^{-1}$ is algebraically stable and $r(f^n) = \deg f^n \cdot deg f^{-n}$ Also, by Lemma~\ref{joint}, we have $I(f^{l_2}) \subset I(f)$ and $I(f^{-l_1}) \subset I(f^{-1})$. So, it is enough to check the degree condition.

        Because Proposition~\ref{degree}, $\deg f^{l_1} = \deg f^{-l_2}$. Thus,
        \[
        \deg (f^{l_2} \circ f^{-l_1}) = \deg (f^{l_2 - l_1}) < \min \left( \deg f^{-l_1}, \deg f^{l_2} \right).
        \]
    \end{proof}

    So $\{f^{l_2} , f^{-l_1}\}$ has the dynamical equidistribution of small points. Especially, we want to show the equidistribution of periodic points, which means we want to find a generic and small sequence of periodic points. It will be done by proving that $\Per(f)$ is Zariski dense. We will modify Fakhruddin's proof for polarizable morphism case \cite{Fa}, using Fornaess \& Sibony's result in complex dynamics \cite{FS}.

\begin{thm}\label{finite}
    Let $\mathcal{K}$ be an algebraically closed field and let $\psi_1, \psi_2 : \pp^n_{\mathcal{K}} \rightarrow \pp^m_{\mathcal{K}}$ be morphisms. Suppose that $\deg \psi_1 \neq \deg \psi_2$. Then
    \[
    \{ X \in \pp^n_{\mathcal{K}} ~|~ \psi_1(X) = \psi_2(X) \}
    \]
    is a finite set.
\end{thm}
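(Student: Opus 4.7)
The plan is to argue by contradiction: if the coincidence locus were infinite, I could find an irreducible curve inside it on which $\psi_1$ and $\psi_2$ agree, and then observe that the two morphisms must pull $\ox_{\pp^m}(1)$ back to line bundles of the same degree on that curve, which would force $\deg \psi_1 = \deg \psi_2$.

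First I would set $Z = \{X \in \pp^n_{\mathcal{K}} \mid \psi_1(X) = \psi_2(X)\}$ and observe that $Z$ is a closed subscheme of $\pp^n_{\mathcal{K}}$, being the preimage of the diagonal $\Delta \subset \pp^m_{\mathcal{K}} \times \pp^m_{\mathcal{K}}$ under the morphism $(\psi_1,\psi_2)$. Assuming $Z$ is infinite, since $\mathcal{K}$ is algebraically closed $Z$ contains an irreducible closed subvariety $W$ of positive dimension. Inside $W$ I would then extract an irreducible projective curve $C$---either by iteratively intersecting with general hyperplanes (using the Bertini-type irreducibility of general hyperplane sections once $\dim W \geq 2$) or by joining two general points of $W$ by an irreducible curve. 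The essential feature is that $C$ is a reduced, irreducible, one-dimensional closed subscheme of $\pp^n_{\mathcal{K}}$ contained in $Z$, and since $\pp^m_{\mathcal{K}}$ is separated the two morphisms agree not merely pointwise but as morphisms: $\psi_1|_C = \psi_2|_C$.

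Next I would translate this equality of morphisms into an equality of line bundles. Writing $d_i = \deg \psi_i$, one has $\psi_i^{\ast}\ox_{\pp^m}(1) \cong \ox_{\pp^n}(d_i)$, and therefore
\[
\ox_{\pp^n}(d_1)|_C \;\cong\; (\psi_1|_C)^{\ast}\ox_{\pp^m}(1) \;=\; (\psi_2|_C)^{\ast}\ox_{\pp^m}(1) \;\cong\; \ox_{\pp^n}(d_2)|_C.
\]
Taking degrees on the irreducible projective curve $C$ yields $d_1\deg C = d_2\deg C$; since $\ox_{\pp^n}(1)|_C$ is ample one has $\deg C > 0$, forcing $d_1 = d_2$, which contradicts the hypothesis $\deg \psi_1 \neq \deg \psi_2$.

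The only mildly delicate step is the extraction of an irreducible curve $C\subset W$ when $\dim W \geq 2$, but this is standard. Everything else---the functoriality of pullback, the identification $\psi_i^{\ast}\ox_{\pp^m}(1) \cong \ox_{\pp^n}(d_i)$, and the multiplicativity of degree under restriction of an ample line bundle to a curve---is routine, so no real obstacle is anticipated.
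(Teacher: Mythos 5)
Your proof is correct, but it takes a different route from the paper: the paper offers no argument at all for this statement, simply citing Fornaess--Sibony \cite[Theorem~3.1]{FS}, a result from complex dynamics stated over $\cc$. Your argument---pass to an irreducible curve $C$ inside the coincidence locus, note that pointwise agreement on the reduced curve forces $\psi_1|_C=\psi_2|_C$ since $\pp^m$ is separated, and compare degrees of $(\psi_i|_C)^{\ast}\ox_{\pp^m}(1)\cong\ox_{\pp^n}(d_i)|_C$ to get $d_1\deg C=d_2\deg C$ with $\deg C>0$---is purely algebro-geometric and works over an arbitrary algebraically closed field, which is in fact the generality the paper actually uses (it applies the theorem to reductions over the residue fields $\overline{\mathbf{k}}_v$ in the proof of Theorem~C, where the complex-analytic citation does not literally apply). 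So your version buys exactly the positive-characteristic generality the paper needs but does not justify. One simplification: the step you flag as delicate is not needed in the form you describe. You do not need an irreducible hyperplane section of $W$; just cut a positive-dimensional irreducible component $W\subset Z$ by $\dim W-1$ general hyperplanes (each chosen not to contain any component of the previous intersection), obtain a nonempty closed set of pure dimension one, and take any irreducible component of it as $C$. With that remark, and noting that the argument also covers the case where one $\psi_i$ is constant (degree $0$), the proof is complete.
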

\begin{proof}
See \cite[Theorem~3.1]{FS}.
\end{proof}

\begin{thmC}\label{Zdense}
Let $f: \af^n \rightarrow \af^n$ be a regular polynomial automorphism defined over a number field $K$. Then, the set of periodic point is Zariski dense.
\end{thmC}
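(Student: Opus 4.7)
The plan is to adapt Fakhruddin's proof of the Zariski density of periodic points for polarizable endomorphisms to our setting using the strongly regular pair $\{f^{l_2}, f^{-l_1}\}$ furnished by Proposition~\ref{degree} and Lemma~\ref{pick}, together with the Fornaess--Sibony finiteness result (Theorem~\ref{finite}). The device that replaces the polarized self-map in Fakhruddin's argument is the morphism $\phi_m:\pp^n\to\pp^{2n}$ from Theorem~\ref{main}; the diagonal embedding of $\pp^n$ in $\pp^{2n}$ plays the role of the diagonal in $V\times V$.

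I would first set $l_1=\dim I(f)+1$, $l_2=\dim I(f^{-1})+1$ (so $l_1+l_2=n$ and $d:=\deg f^{l_2}=\deg f^{-l_1}\geq 2$), write $F=f^{l_2}$ and $G=f^{-l_1}$, and use the key identity that for $X\in\af^n$,
\[
F^m(X)=G^m(X)\;\Longleftrightarrow\;f^{mn}(X)=X,
\]
so any such $X$ lies in $\Per(f)$. By Lemma~\ref{joining}, $\phi_m=(F^m,G^m)$ extends to a morphism $\phi_m:\pp^n\to\pp^{2n}$ of degree $d^m$, and I would introduce the linear diagonal embedding $\Delta:\pp^n\to\pp^{2n}$, $[X_0{:}X_1{:}\cdots{:}X_n]\mapsto [X_0{:}X_1{:}\cdots{:}X_n{:}X_1{:}\cdots{:}X_n]$, a morphism of degree $1$ whose image $\Delta(\pp^n)$ is a linear $n$-plane of class $H_{2n}^{\,n}$ in $\operatorname{CH}^n(\pp^{2n})$. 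Crucially, $\phi_m|_{\af^n}$ is injective, because $F^m$ is a polynomial automorphism of $\af^n$.

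Pulling back in the Chow ring yields
\[
\phi_m^{\ast}[\Delta(\pp^n)]=(d^m H_n)^n=d^{mn}\cdot[\mathrm{pt}]\quad\text{in }\operatorname{CH}_0(\pp^n),
\]
so the scheme $\phi_m^{-1}(\Delta(\pp^n))$, when zero-dimensional, has total length $d^{mn}$; if instead a positive-dimensional component meets $\af^n$ then $\Per(f)$ already contains a positive-dimensional algebraic subset. Set-theoretically $\phi_m^{-1}(\Delta(\pp^n))\cap\af^n=\operatorname{Fix}(f^{mn})\subset\Per(f)$. Now suppose for contradiction that $W:=\overline{\Per(f)}\subsetneq\pp^n$, and set $W'=W\cup H_\infty$, a proper closed subvariety of $\pp^n$ of dimension at most $n-1$; then $\phi_m^{-1}(\Delta(\pp^n))\subset W'$ for every $m$. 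Since $\phi_m^{\ast}H_{2n}|_{W'}=d^m H_n|_{W'}$, the image $\phi_m(W')$ has degree at most $d^{m(n-1)}\deg W'$ in $\pp^{2n}$; applying the refined Bezout inequality to $\phi_m(W')$ and the linear space $\Delta(\pp^n)$ (degree~$1$), together with Theorem~\ref{finite} to exclude excess higher-dimensional components, one obtains $\operatorname{length}\phi_m^{-1}(\Delta(\pp^n))\leq d^{m(n-1)}\deg W'$. Combining with the exact length $d^{mn}$ computed above gives $d^m\leq \deg W'$, absurd for $m$ large since $d\geq 2$; hence $\Per(f)$ is Zariski dense.

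The main technical obstacle is the Bezout-type upper bound just used. Because $\dim W'+\dim\Delta(\pp^n)<\dim\pp^{2n}$, the intersection $\phi_m(W')\cap\Delta(\pp^n)$ is of excess (negative expected) dimension, so the naive Bezout theorem does not apply and one must appeal either to Fulton's refined intersection theory (in which isolated contributions are still bounded by $\deg\phi_m(W')\cdot\deg\Delta(\pp^n)$) or to a direct local multiplicity analysis leveraging the injectivity of $\phi_m|_{\af^n}$ and Theorem~\ref{finite}. A secondary subtlety is to rule out positive-dimensional components of $\phi_m^{-1}(\Delta(\pp^n))$ supported entirely inside $H_\infty$, which could otherwise contribute to the length on the left-hand side without producing new periodic points of $f$ on $\af^n$; here too Theorem~\ref{finite} applied at the infinite part of $\phi_m$ and $\Delta$ is the decisive tool.
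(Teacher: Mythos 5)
Your setup is sound as far as it goes: with $F=f^{l_2}$, $G=f^{-l_1}$ the scheme $Z_m=\phi_m^{-1}\bigl(\Delta(\pp^n)\bigr)$ is cut out by the $n$ forms $F^{(m)}_i-G^{(m)}_i$ of degree $d^m$, its affine part is $\operatorname{Fix}(f^{mn})$, and in fact $Z_m$ misses $H_\infty$ entirely (on $H_\infty$ the extension of $F^m$ takes values in $I(f^{-1})$ and that of $G^m$ in $I(f)$, which are disjoint), so $Z_m$ is finite of length exactly $d^{mn}$; your worry about components at infinity is moot, and your parenthetical remark that a positive-dimensional component meeting $\af^n$ would finish the proof is not right anyway (a positive-dimensional set of periodic points is not Zariski dense when $n\geq 2$). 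The fatal step is the claimed bound $\operatorname{length} Z_m\leq d^{m(n-1)}\deg W'$. Since $\operatorname{length} Z_m=d^{mn}$ no matter where its support lies, no Bezout-type statement can yield such an inequality; what Fulton's refined Bezout controls is the number and degrees of the \emph{distinct} components of $\phi_m(W')\cap\Delta(\pp^n)$, not the scheme-theoretic multiplicities of the fibre $Z_m$, and injectivity of $\phi_m$ on $\af^n$ does not transfer multiplicities either. To contradict ``$\operatorname{Supp}Z_m\subset W$ for all $m$'' you need a lower bound on the number of \emph{distinct} fixed points of $f^{mn}$, i.e. an upper bound on their multiplicities, and nothing in your argument (nor Theorem~\ref{finite}) provides this: a zero-dimensional complete intersection of $n$ degree-$D$ hypersurfaces in $\pp^n$ can have its whole length $D^n$ concentrated at a few points lying on a fixed line (already in $\pp^2$ two degree-$D$ curves can meet at a single point with multiplicity $D^2$). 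Controlling these multiplicities for a regular automorphism is a genuinely hard dynamical input, essentially of the strength of the equidistribution results one is trying to prove.

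The paper avoids this issue entirely by going non-archimedean rather than global-geometric: at a place $v$ of good reduction, $\widetilde f$ permutes the finite sets $\af^n(\mathbf{k}')$ for every finite extension $\mathbf{k}'$ of the residue field, so \emph{every} point of $\af^n_{\overline{\mathbf{k}}_v}$ is periodic; by Theorem~\ref{finite} each locus $\widetilde A_m=\{\widetilde\phi_m=\widetilde\iota\}$ is finite, and these finite sets exhaust $\af^n_{\overline{\mathbf{k}}_v}$. One then spreads out over an arithmetic model: the loci $\{\phi^A_m=\iota^A\}$ meet almost all special fibres, hence contain horizontal components, and their intersections with the generic fibre give a Zariski-dense set of periodic points of $f$ over $\overline K$. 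If you want to keep your purely intersection-theoretic counting, you would have to add a multiplicity (or simplicity) estimate for the fixed-point schemes of $f^{mn}$; the reduction-mod-$v$ argument is precisely the device that makes such an estimate unnecessary.
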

\begin{proof}
Let $K$ be a finitely generated field over $\qq$ containing all coefficients of $f$. It is enough to show that $\Per(f)$ is Zariski dense on $\pp^n_{\overline{K}}$.

Lemma~\ref{pick} says that $\{ g_1 = f^{l_2}, g_2 = f^{-l_1}\}$ is strongly regular of the same degree. Also, $\{ g_1^m, g_2^{m}\}$ is strongly regular with the same degree for all natural number $m$ by Corollary~\ref{again}. Define a sequence of morphisms;
\[
\phi_m (X) : = (g_1^m(X), g_2^{m}(X)).
\]

Let $\mathbb{B}$ be a finite set of places such that $\{g_1, g_2\}$ have good reduction at $v\not \in \mathbb{B}$. Then $\phi_1$ also has good reduction. Let $\mathbf{k}=\mathbf{k}_v$ be the corresponding residue field. Consider a morphism
\[
\widetilde{\iota} : \pp^n_{\overline{\mathbf{k}}} \rightarrow \pp^{2n}_{\overline{\mathbf{k}}}, \quad [X_0, X_1, \cdots, X_n ] \mapsto [X_0, X_1, \cdots, X_n , X_1 \cdots, X_n].
\]
    Then, by Theorem~\ref{finite}, we have finite sets
    \[
    \widetilde{A}_m = \widetilde{A}_{m,v} := \{X \in \pp^n_{\overline{\mathbf{k}}} ~|~ \widetilde{\phi_m} (X) = \widetilde{\iota}(X) \}
    \]
    in $\pp^n_{\overline{\mathbf{k}}}$.

    Let $l = |l_1 - l_2|$ and pick a point
    \[
    X \in \Per_{lm}(\widetilde{f})_v = \{ X' \in \af^n_{\overline{\mathbf{k}}}~|~ \widetilde{f}^{lm}(X') = X'\}.
    \]
    Then, we have
    \[
    \widetilde{g_1}^m(X) = \widetilde{f}^{ml_1}(X) = \widetilde{f}^{-ml_2}(X) = \widetilde{g_2}^m(X).
    \]
    So,
    \[
    \Per_{lm}\bigr( \widetilde{f} \bigr)_v \subset  \widetilde{A}_m .
    \]
    Moreover,
    \[
    \widetilde{f} : \af^n_{\mathbf{k}'} \rightarrow \af^n_{\mathbf{k}'}
    \]
    is still an automorphism for any finite extension $\mathbf{k}'$ of $\mathbf{k}$. Thus, all points are periodic points: $ \Per(\widetilde{f})_v = \af^n_{\overline{\mathbf{k}}}$.

    Therefore, the union of $\widetilde{A}_m$ contains a Zariski dense subset of $\pp^n_{\overline{\mathbf{k}}}$;
    \[
    \af^n_{\overline{\mathbf{k}}} \subset \bigcup \widetilde{A}_{m}.
    \]

Now, let $\mathcal{W}_1, \mathcal{W}_2$ be arithmetic models of $\pp^n_{K}$ and $\pp^{2n}_{K}$, let ${\phi}^A_m, {\iota}^A$ be extensions of $\phi_m$ and $\iota$ and let
\[
\mathcal{A}_m = \{\mathcal{X} \in \mathcal{W}_1 ~|~ {\phi}^A_m(\mathcal{X})= {\iota}^A(\mathcal{X}) \}.
\]
It is a nontrivial subvariety of $\mathcal{W}_1$. And, it cannot be of dimension 0 because it contains points in all but finitely many special fibers of $\mathcal{W}$, where $f, f^{-1}$ have good reduction. Thus, it is of dimension at least 1. But, each fiber only contains finitely many intersection with $\mathcal{A}_m$, which is $\widetilde{A}_{m,v}$. Therefore, $\bigcup \mathcal{A}_m$ consists of infinitely many horizontal sections whose intersection with all special fibers over $\mathbb{B}^c$ are Zariski dense. Hence, $\bigcup \mathcal{A}_m$ consists of infinitely many horizontal sections whose intersection with all fiber, especially with the generic fiber, are Zariski dense.
\end{proof}

    \begin{cor}\label{gs}
        Let $f :\af^n \rightarrow \af^n$ be a regular polynomial automorphism. Then, there is a generic and small  sequence $\{ x_m\} \subset \Per(f)$.
    \end{cor}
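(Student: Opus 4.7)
The plan is to combine Theorem~C (Zariski density of $\Per(f)$) with the observation that every periodic point automatically has canonical height zero. Write $S = \{g_1, g_2\} = \{f^{l_2}, f^{-l_1}\}$ as in Lemma~\ref{pick}, so that $\widetilde{h}_S$ is the arithmetic canonical height built in Section~6 from the uniformly convergent adelic metric on $\ox_{\pp^n}(1)$ produced by the morphisms $\phi_m = (g_1^m, g_2^m)$.

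First I would dispose of the smallness condition. For any $x \in \Per(f)$, the iterates $g_1^m(x)$ and $g_2^m(x)$ lie in a finite set, so $\log \|\phi_m(x)\|_v$ is bounded in $m$ at every place $v$, and $\frac{1}{d^m}\log\|\phi_m(x)\|_v \to 0$. Combined with the good-reduction analysis of Section~4 (which trivializes the metric at all but finitely many places) and the uniform convergence of Proposition~\ref{uniform}, this gives $\widetilde{h}_S(x) = 0$ on all of $\Per(f)$. Since $\Per(f)$ is Zariski dense in $\pp^n$ with height identically zero, Zhang's inequality (the essential minimum bounds the variety height from above, and heights of points are bounded below by the infimum of essential minima) forces $\widetilde{h}_S(\pp^n) = 0$ as well. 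Consequently, \emph{every} sequence in $\Per(f)$ is automatically small: $\widetilde{h}_S(x_m) \equiv 0 = \widetilde{h}_S(\pp^n)$.

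For genericity, I would run a diagonal construction. Any proper closed subvariety of $\pp^n_{\overline{K}}$ is defined over a finite extension of $K$, and taking its $K$-Galois closure produces a proper subvariety of $\pp^n_K$. So it suffices to arrange finite intersection with each proper subvariety of $\pp^n$ defined over $K$, and there are only countably many such because $K$ is countable. Enumerate them $W_1, W_2, \ldots$, pick $x_1 \in \Per(f)$ arbitrarily, and inductively choose
\[
x_{m+1} \in \Per(f) \setminus \bigl( W_1 \cup \cdots \cup W_m \cup \{x_1, \ldots, x_m\}\bigr).
\]
The choice is always possible because the excluded set is a proper closed subset of the irreducible variety $\pp^n$ while $\Per(f)$ is Zariski dense by Theorem~C. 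Given an arbitrary proper subvariety $W \subset \pp^n_{\overline{K}}$ with $K$-Galois closure $W_j$, the condition $x_m \in W \subset W_j$ forces $m \leq j$, so $\{x_m\} \cap W$ is finite.

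The main obstacle is the smallness half. The genericity argument is a routine diagonal that uses only the countability of $K$ together with Theorem~C. But verifying $\widetilde{h}_S(\pp^n) = 0$ and $\widetilde{h}_S|_{\Per(f)} \equiv 0$ requires unpacking the metrized adelic line bundle produced in Theorem~\ref{main} and carefully handling the archimedean and bad nonarchimedean places via the estimates of Sections~3--5; this uses the ``pair-type'' formalism, which is less standard than the single polarized endomorphism case and is where the technical weight of the proof really sits.
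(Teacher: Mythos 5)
Your proposal is correct and follows essentially the same route as the paper: genericity is extracted from the Zariski density of $\Per(f)$ (Theorem~C), smallness comes from the vanishing of the canonical height on periodic points together with Zhang's essential-minima inequality applied to the dynamical metrized line bundle of Section~6 to conclude $\widetilde{h}_S(\pp^n)=0$. The only difference is that you spell out details the paper merely asserts (the diagonal choice of a generic sequence and the verification that periodic points have canonical height zero), which is fine.
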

    \begin{proof}
        Because of Theorem~C, we can find a generic sequence $\{x_m\} \subset \Per(f)$. So, we only need to show that it is small:
        let $\overline{L} = (\ox_{\pp^n}, ||\cdot||_S)$ be the metrized line bundle with the dynamical adelic metric defined in Section~6 and let $h_{\overline{L}}$ be the corresponding arithmetic height. Because ${h}_{\overline{\mathcal{L}}}(x_m)=0$ if $x_m \in \Per(f)$, we only have to show that
        ${h}_{\overline{L}}(\pp^n)=0$ to claim that $\{ x_m\}$ is a small sequence.

        \cite[Theorem 1.10]{Z1} says that
        \[
        e_1(\overline{L})   \geq
        {h}_{\overline{L}}(\pp^n) \geq \dfrac{1}{n} \sum_{i=1}^n e_i (\overline{L})
        \]
        where
        \[
        e_i(\overline{L})
        = \sup_{\substack{W \subsetneq \pp^n \\ \operatorname{Codim} W = i}} \inf_{X \in \pp^n \setminus W} \widehat{h}_{\overline{L}} (X).
        \]
        By Theorem~\ref{Zdense}, $\Per(f)$ is Zariski Dense in $\pp^n$ and hence $e_i(\overline{L}) =0$ for all $i = 1, \cdots, n$ and hence ${h}_{\overline{L}}(\pp^n) =0$.
    \end{proof}

    Now, we can prove the desired theorem:

    \begin{thmA}
        Let $f:\af^n \rightarrow \af^n$ be a regular polynomial automorphism defined over a number field $K$ and let $v\in M_K$. Then, there exists an $f$-invariant probability measure $\mu_{f,v}$ on $\operatorname{Berk}\bigl( \pp^n_{\mathbb{C}_v} \bigr)$ such that the set of periodic points of $f$ is equidistributed with respect to $\mu_{f,v}$: let $\{ x_m\} $ be a sequence of periodic points of $f$ such that $|\{ x_m\} \cap W|< \infty$ for any proper subvariety $W$ of $\pp^n$. Then
        \[
        \dfrac{1}{\ord x_m} \sum_{y \in \Gamma_{x_m}} \delta_y ~\text{weakly converges to}~\mu_{f,v}
        \]
       where $\Gamma_{x_m}$ is the Galois orbit of $x_m$ and $\delta_y$ is a Dirac measure at $y$.
    \end{thmA}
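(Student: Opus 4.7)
The plan is to deduce Theorem~A from Theorem~B applied to an appropriate strongly regular pair extracted from $f$ and $f^{-1}$, together with Theorem~C, which supplies the required generic sequence of periodic points.

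First I would set up the strongly regular pair. Using Proposition~\ref{degree} let $l_1 = \dim I(f) + 1$ and $l_2 = \dim I(f^{-1}) + 1$, so that $d := \deg f^{l_2} = \deg f^{-l_1}$. By Lemma~\ref{pick} the pair $S = \{g_1, g_2\} := \{f^{l_2}, f^{-l_1}\}$ is strongly regular with $\deg g_1 = \deg g_2 = d$, so the hypotheses of Theorem~B are satisfied. Let $(\ox_{\pp^n}(1), \|\cdot\|_S)$ be the semipositive adelic metrized line bundle of Section~\ref{equidistribution} and let $\widetilde{h}_S$ denote its arithmetic height; for each $v\in M_K$ let $\mu_{S,v}$ be the associated Chambert-Loir measure on $\operatorname{Berk}(\pp^n_{\cc_v})$.

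Next I would verify that the given sequence $\{x_m\}$ of $f$-periodic points is generic and small with respect to $\widetilde{h}_S$. It is generic by hypothesis. For smallness, every $f$-periodic point is simultaneously periodic for $g_1$ and $g_2$, hence lies in the zero locus of the canonical height $\widetilde{h}_S$; this gives $\widetilde{h}_S(x_m) = 0$ for every $m$. The target value $\widetilde{h}_S(\pp^n)$ vanishes by the argument of Corollary~\ref{gs}: Theorem~C shows $\Per(f)$ is Zariski dense, so Zhang's successive minima inequality forces all essential minima $e_i(\overline{\ox_{\pp^n}(1)})$ to vanish. Hence $\widetilde{h}_S(x_m) \to \widetilde{h}_S(\pp^n)$ trivially and $\{x_m\}$ is small. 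Theorem~B then yields the weak convergence
\[
\frac{1}{\#\Gamma_{x_m}}\sum_{y\in \Gamma_{x_m}}\delta_y \;\longrightarrow\; \mu_{S,v}.
\]
I then set $\mu_{f,v} := \mu_{S,v}$.

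Finally I would establish $f$-invariance of $\mu_{f,v}$. The cleanest route is to apply the equidistribution twice: the shifted sequence $\{f(x_m)\}$ is again a generic sequence of $f$-periodic points (genericity is preserved because $f$ is a polynomial automorphism of $\af^n$ mapping $\af^n$ onto itself and any algebraic set avoided by $\{x_m\}$ pulls back to an algebraic set avoided by $\{f(x_m)\}$), so it is again generic and small; hence its Galois orbit measures converge to the same $\mu_{f,v}$. On the other hand, Galois commutes with $f$, so the measure associated to $\{f(x_m)\}$ equals the pushforward under $f$ of the measure associated to $\{x_m\}$, giving $f_* \mu_{f,v} = \mu_{f,v}$.

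The main obstacle I anticipate is the genericity of $\{f(x_m)\}$, which on the Berkovich side requires care because $f$ has indeterminacy on $\pp^n$; however, since each $x_m$ is a periodic point of $f$ it lies in $\af^n$ (away from $I(f)$), and $f$ is a bijection on $\af^n$, so the pullback argument above goes through. The only other point requiring attention is the verification that $\widetilde{h}_S(\pp^n) = 0$, which I would handle exactly as in Corollary~\ref{gs} by invoking Zhang's inequality and Theorem~C to conclude that all successive minima vanish.
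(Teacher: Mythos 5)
Your proposal is correct and follows essentially the same route as the paper: reduce to the strongly regular pair $\{f^{l_2}, f^{-l_1}\}$ via Proposition~\ref{degree} and Lemma~\ref{pick}, verify smallness of the periodic-point sequence by the vanishing of the canonical height at periodic points together with $\widetilde{h}_S(\pp^n)=0$ from Zhang's inequality and the Zariski density of $\Per(f)$ (Theorem~C, as in Corollary~\ref{gs}), and then invoke Theorem~B; your $f$-invariance argument via the shifted sequence $\{f(x_m)\}$ and $f\Gamma_{x_m}=\Gamma_{f(x_m)}$ is exactly the paper's subsequent corollary.
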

    \begin{proof}
        By Corollary~\ref{gs}, we can find a generic and small sequence consisting of periodic points of $f$. Thus, by Theorem~B, we have the equidistribution of periodic points of $f$ with respect to $\mu_{f,v} = \mu_{\{f^{l_1}, f^{-l_2}\},v}$.
    \end{proof}

    \begin{cor}
    $\mu_{f,v}$ in Theorem~A is $f$-invariant measure.
    \end{cor}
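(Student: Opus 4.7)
The plan is to deduce $f$-invariance of $\mu_{f,v}$ directly from the equidistribution statement of Theorem~A itself, by comparing the canonical measures produced by two generic small sequences of periodic points which differ by a single application of~$f$.

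First I would invoke Corollary~\ref{gs} to produce a generic and small sequence $\{x_m\}\subset \Per(f)$ with respect to $\widetilde{h}_S$ for $S=\{f^{l_2},f^{-l_1}\}$. Set $y_m := f(x_m)$. Since $f$ is a polynomial automorphism of $\af^n$ and each $x_m$ is periodic, each $y_m$ lies in $\af^n$ and is again a periodic point of $f$. I would then verify that $\{y_m\}$ is itself generic and small. For genericity: if some proper subvariety $W\subsetneq \pp^n$ contained infinitely many~$y_m$, then $W\cap\af^n$ is a proper subvariety of $\af^n$ (otherwise infinitely many $y_m$ lie at infinity, impossible since $f(\af^n)\subset\af^n$), and its preimage under the automorphism $f\colon\af^n\to\af^n$ is a proper subvariety of $\af^n$ whose projective closure contains infinitely many $x_m$, contradicting the genericity of $\{x_m\}$. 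For smallness: by the argument inside the proof of Corollary~\ref{gs} the arithmetic canonical height $\widetilde{h}_S$ vanishes on $\Per(f)$ and $\widetilde{h}_S(\pp^n)=0$, so $\widetilde{h}_S(y_m)=0=\widetilde{h}_S(\pp^n)$.

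Since $f$ is defined over $K$, its action commutes with the absolute Galois group, so $\Gamma_{y_m}=f(\Gamma_{x_m})$, and because $f$ is injective on $\overline{K}$-points,
\[
\frac{1}{\ord y_m}\sum_{y\in\Gamma_{y_m}}\delta_{y}
\;=\;
f_{*}\!\left(\frac{1}{\ord x_m}\sum_{x\in\Gamma_{x_m}}\delta_{x}\right).
\]
Applying Theorem~A to each of the sequences $\{x_m\}$ and $\{y_m\}$, both sides of this identity converge weakly to $\mu_{f,v}$ on $\operatorname{Berk}\bigl(\pp^n_{\cc_v}\bigr)$; passing to the limit yields $f_{*}\mu_{f,v}=\mu_{f,v}$.

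The principal technical obstacle will be justifying the interchange of weak limit and $f_{*}$: as a rational map on $\pp^n$, the Berkovich extension of $f$ is discontinuous at its indeterminacy locus $I(f)\subset H_\infty$, so $f_*$ is not a priori weakly continuous on all of $\operatorname{Berk}\bigl(\pp^n_{\cc_v}\bigr)$. I would handle this by testing the above identity against continuous functions $\varphi$ supported in $\operatorname{Berk}\bigl(\af^n_{\cc_v}\bigr)$, where $f$ is a genuine morphism and extends to a continuous self-map of Berkovich affine space, and then checking that $\mu_{f,v}$ assigns no mass to the Berkovich analytification of the hyperplane at infinity. The latter should follow from the Green-function construction of the dynamical adelic metric in Section~6: the local Green functions $G_{S,v}$ built from Theorems~\ref{two} and~\ref{arch} are finite and pluripotential-regular on $\af^n_{\cc_v}$, so the associated Chambert--Loir measure is concentrated on $\operatorname{Berk}\bigl(\af^n_{\cc_v}\bigr)$, which is exactly the locus on which $f$ acts as a continuous map.
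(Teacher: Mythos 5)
Your proposal is correct and follows essentially the same route as the paper: both compare the equidistribution statements for a generic and small sequence $\{x_m\}$ of periodic points and for its image sequence $\{f(x_m)\}$, using $f\Gamma_{x_m}=\Gamma_{f(x_m)}$ and passing to the weak limit to conclude that the pushforward of $\mu_{f,v}$ under $f$ equals $\mu_{f,v}$. The only difference is that you spell out the genericity and smallness of $\{f(x_m)\}$ and the continuity issue at the indeterminacy locus (handled by observing that $\mu_{f,v}$ puts no mass on the hyperplane at infinity), points which the paper's one-line computation leaves implicit.
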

    \begin{proof}
    If $\{x_m\}$ is a generic and small sequence, then so is $\{ f(x_m)\}$. Also, $f\Gamma_{x_m} = \Gamma_{f(x_m)}$. Therefore,
    \[
    \int \phi f^*\mu_{f,v} = \int \phi \circ f  \mu_{f,v} = \lim \sum_{y\in \Gamma_{x_m}} \int \phi \circ f  \delta_{y}
    = \lim \sum_{f(y)\in \Gamma_{f(x_m)}} \int \phi \delta_{f(y)} = \int \phi \mu_{f,v}.
    \]
    \end{proof}

    If $n=2$ and $v \infty$, then $\mu_{f,\infty} = \mu_+ \wedge \mu_-$ by \cite{BLS}. If $f$ is a shift-like automorphism on $\cc^n$ and $l_1, l_2$ are integers defined in Proposition~7.1, then
    $\mu_{f,v} = \mu_+^{l_2} \wedge \mu_-^{l_1}$ by \cite{BP}. Recently, Dinh and Sibony showed it is true for any regular polynomial automorphism:

    \begin{thm}[Dinh-Sibony]
    Let $v\in M_K$ be an archimedean place. Then $\mu_{f,v}$ in Theorem~A is
    \[
    \mu_{f,v} = \mu_+^{l_2} \wedge \mu_-^{l_1}
    \]
    where $\mu_+$, $\mu_-$ are the Green currents defined by $f, f^{-1}$ and $l_1,l_2$ are integers defined in Proposition~7.1.
    \end{thm}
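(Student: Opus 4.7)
The plan is to identify the two probability measures on $\pp^n(\cc_v)$ by matching the local pluripotential-theoretic data they determine. Write $T^+ = dd^c G_{f,v}$ and $T^- = dd^c G_{f^{-1},v}$, so that $\mu_+ = T^+$ and $\mu_- = T^-$ in the notation of the statement.

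First I would unpack what $\mu_{f,v}$ actually is, by tracing the construction in Section~\ref{equidistribution}. Recall that $\mu_{f,v}$ is the archimedean Chambert-Loir measure associated to the metrized line bundle $\overline{L} = (\ox_{\pp^n}(1),||\cdot||_S)$, whose metric is the uniform limit at $v$ of the metrics $||\cdot||_{L,m,v}$ pulled back from the Fubini-Study metric on $\pp^{2n}$ via the morphisms $\phi_m = (f^{m l_2}, f^{-m l_1}) : \pp^n \to \pp^{2n}$ of degree $d^m = (\deg f^{l_2})^m$. Since $G_{f^{l_2},v}=G_{f,v}$ and $G_{f^{-l_1},v}=G_{f^{-1},v}$ (the escape rate is invariant under replacing a map by its iterate), the local potential of the limiting metric on $\af^n$ is
\[
\tau_v(X) \;=\; \max\bigl(G_{f,v}(X),\, G_{f^{-1},v}(X)\bigr) - \log^+\|X\|_v,
\]
so the curvature current is $c_1(\overline{L})_v = dd^c \max(G_{f,v}, G_{f^{-1},v})$, and Chambert-Loir's definition gives
\[
\mu_{f,v} \;=\; \bigl(dd^c \max(G_{f,v},\, G_{f^{-1},v})\bigr)^n.
\]

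Second, I would prove the identity $\bigl(dd^c \max(G_{f,v}, G_{f^{-1},v})\bigr)^n = (T^+)^{l_2} \wedge (T^-)^{l_1}$, which is the complex-analytic heart of the theorem. The key inputs are the bidegree-vanishing properties of the Green currents of a regular polynomial automorphism, due to Sibony and Dinh-Sibony: $(T^+)^k \equiv 0$ for $k > l_2$ and $(T^-)^k \equiv 0$ for $k > l_1$. On the open set $\{G_{f,v} > G_{f^{-1},v}\}$ the max coincides with $G_{f,v}$ and the Bedford-Taylor product $(dd^c\max)^n$ reduces to $(T^+)^n = 0$; symmetrically it vanishes on $\{G_{f,v} < G_{f^{-1},v}\}$. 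All the mass of $(dd^c\max)^n$ is therefore supported on the coincidence locus $\{G_{f,v}=G_{f^{-1},v}\}$, where a multilinear expansion combined with the continuity of the Green functions should collapse $(dd^c\max)^n$ onto the single surviving cross-term $(T^+)^{l_2}\wedge(T^-)^{l_1}$, the combinatorial coefficient being pinned to $1$ by the fact that both sides are probability measures.

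The hard part will be justifying that final pluripotential-theoretic computation rigorously: $G_{f,v}, G_{f^{-1},v}$ are only continuous (not smooth), so wedging $dd^c\max$ with itself across the interface $\{G_{f,v}=G_{f^{-1},v}\}$ requires the Dinh-Sibony machinery for intersecting positive closed $(1,1)$-currents attached to regular polynomial automorphisms. I would appeal at this step to the decomposition and convergence results of \cite{DS2}, which both extract the correct bidegree component and guarantee positivity and non-triviality of $\mu_+^{l_2}\wedge\mu_-^{l_1}$.
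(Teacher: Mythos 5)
Your overall route is the same as the paper's: identify the archimedean Chambert-Loir measure $\mu_{f,v}$ as the Monge-Amp\`ere measure $\bigl(dd^c \max(G_{f,v},G_{f^{-1},v})\bigr)^n = \bigl(dd^c G_{S,v}\bigr)^n$ of the limiting Green potential, and then invoke the complex-dynamical identity $\bigl(dd^c G_{S,v}\bigr)^n = \mu_+^{l_2}\wedge\mu_-^{l_1}$ for regular automorphisms. Your first step is actually more explicit than the paper's (which only asserts $\mu_{f,v}=(dd^c G_S)^n$ and defers the check), and your observation that $G_{f^{l_2},v}=G_{f,v}$ and $G_{f^{-l_1},v}=G_{f^{-1},v}$ is the right way to pass from the pair $\{f^{l_2},f^{-l_1}\}$ used in Section~6 back to $f$ itself.

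The difference lies in the second step: the paper simply quotes \cite[Proposition~2.9]{GS} (with \cite{DS2} as an alternative), whereas you sketch a proof, and the sketch as written does not go through. There is no ``multilinear expansion'' of $\bigl(dd^c\max(u,v)\bigr)^n$ into terms $(dd^c u)^j\wedge(dd^c v)^{n-j}$, since $\max$ is not additive; the Bedford--Taylor locality argument only yields the vanishing on the open sets $\{G_{f,v}>G_{f^{-1},v}\}$ and $\{G_{f,v}<G_{f^{-1},v}\}$ (using $\mu_+^{k}=0$ for $k>l_2$ and $\mu_-^{k}=0$ for $k>l_1$), and extracting exactly the cross-term $\mu_+^{l_2}\wedge\mu_-^{l_1}$ with coefficient one on the coincidence locus is precisely the nontrivial content of the cited result --- it is not pinned down merely by both sides being probability measures, since even the fact that $\mu_+^{l_2}\wedge\mu_-^{l_1}$ is a well-defined probability measure rests on the same intersection theory of these currents. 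Because you ultimately defer this step to the Dinh--Sibony/Guedj--Sibony machinery, your argument lands where the paper does; just drop the heuristic expansion and cite \cite[Proposition~2.9]{GS} or \cite{DS2} directly at that point.
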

    \begin{proof}
    We can find the complete proof in \cite{DS2}. Or, we check the following fact in the next section briefly:
    \[
    \mu_{f,v} = \bigl( dd^c G_S \bigr)^n.
    \]
    Since $f$ is regular, we have
    \[
    \bigl( dd^c G_S \bigr)^n = \mu_+^{l_2} \wedge \mu_-^{l_1}
    \]
    by \cite[Proposition~2.9]{GS}.
   \end{proof}

\end{document}